\documentclass{birkjour}
\usepackage{amsmath}
\usepackage{amssymb}
\usepackage{amsthm}
\usepackage{hyperref}
\numberwithin{equation}{section}
\newtheorem{theorem}{Theorem}[section]
\newtheorem{lemma}[theorem]{Lemma}
\newtheorem{proposition}[theorem]{Proposition}

\newtheorem{definition}[theorem]{Definition}

\newtheorem{remark}[theorem]{Remark}

\newtheorem{main}[theorem]{Main Results}
\def\H{H_{\sigma}}
\def\dH{\dot{H}_{\sigma}}
\def\L{L_{\sigma}}
\begin{document}
\title[Semi-dissipative and Ideal Boussinesq Equations] {Higher Order Regularity and Blow-up Criterion for Semi-dissipative and Ideal Boussinesq Equations}

\author[Utpal Manna]{Utpal Manna}

\address{%
School of Mathematics\\
Indian Institute of Science Education and Research (IISER) Thiruvananthapuram\\
Thiruvananthapuram 695016\\
Kerala, INDIA}

\email{manna.utpal@iisertvm.ac.in}

\author[Akash A. Panda]{Akash A. Panda}

\address{%
School of Mathematics\\
Indian Institute of Science Education and Research (IISER) Thiruvananthapuram\\
Thiruvananthapuram 695016\\
Kerala, INDIA}

\email{akash.panda13@iisertvm.ac.in}

\subjclass{Primary 76D05; Secondary 76D03}

\keywords{Boussinesq equations, Commutator estimates, Blow-up criterion, Logarithmic Sobolev inequality.}

\begin{abstract}
In this paper we establish local-in-time existence and uniqueness of strong solutions in $H^s$ for $s > \frac{n}{2}$ to the viscous, zero thermal-diffusive Boussinesq equations in $\mathbb{R}^n , n = 2,3$.  Beale-Kato-Majda type blow-up criterion has been established in three-dimensions with respect to the $BMO$-norm of the vorticity. We further prove the local-in-time existence and blow-up criterion for non-viscous and fully ideal Boussinesq systems. Commutator estimates due to Kato and Ponce (1988) \cite {KP} and Fefferman et. al. (2014) \cite {Fe} play important roles in the calculations.

\end{abstract}

\maketitle\setcounter{equation}{0}

\maketitle

\section{Introduction}

The Boussinesq equations for the incompressible fluid flows interacting with an active scalar under the external potential force are given by: 
\begin{align}
 \frac{\partial \mathbf u}{\partial t} + (\mathbf u \cdot \nabla)\mathbf u - \nu \Delta \mathbf u + \nabla p = \mathbf \theta f, \ \ \textrm{in}\ \mathbb{R}^n\times (0, \infty),
\end{align}
\begin{align}
\frac{\partial \mathbf \theta}{\partial t} + (\mathbf u \cdot \nabla)\mathbf \theta = \kappa \Delta \mathbf \theta,  \ \ \textrm{in}\ \mathbb{R}^n\times (0, \infty),
\end{align}
\begin{align*}
\nabla \cdot \mathbf u = 0,  \ \ \textrm{in}\ \mathbb{R}^n\times (0, \infty),
\end{align*}
\begin{align*}
\mathbf u(x,0) = \mathbf u_{0}(x) , \mathbf \theta(x,0) = \mathbf \theta_{0}(x ),  \ \ \textrm{in}\ \mathbb{R}^n,
\end{align*}
where $n$ = 2, 3. Here $\mathbf u$ = $\mathbf u(x,t)$ = $(\mathbf u_{1}(x,t), \dots, \mathbf u_{n}(x,t)),$  a map from  $\mathbb R^{n}$ $\times$ [0, $\infty$) $\to$ $\mathbb R^{n}$, is a vector field denoting the velocity; $\mathbf \theta = \mathbf\theta(x, t)$, a map from $\mathbb R^{n}$ $\times$ [0, $\infty$) $\to$ $\mathbb R$, is a scalar function denoting the temperature of the fluid; $p = p(x, t)$ is the scalar pressure; $\nu\geq 0$ is the kinematic viscosity; $\kappa\geq 0$ is the thermal diffusivity; $(\mathbf u \cdot \nabla)\mathbf u$ has $j$-th component $\mathbf {u}_k \partial_k \mathbf {u}_j$ (with usual summation convention); and  $f = f(x, t)$ is the external potential force i.e. $\nabla\times f = 0.$ We denote the above system by $B_{\nu, \kappa}$ when $\nu > 0$ and $\kappa > 0$. 

The global-in-time regularity in two-dimensions of the system $B_{\nu, \kappa}$ is known for a long time \cite{CD},  and in three-dimensions one has local existence of weak solutions, much like the Navier-Stokes equations. Due to the parabolic-parabolic coupling, it is indeed possible to rewrite the above system in the abstract framework of the Navier-Stokes equations and then use the standard solvability techniques (e.g. see Temam \cite{Te}). In three-dimensions, due to coupling with the Navier-Stokes equations, one can at-most obtain the local-in-time solvability result with arbitrary initial data and global-in-time result for sufficiently small initial data. 

On the other hand, three interesting cases arise when we consider the Boussinesq systems $(i) B_{\nu, 0},$ i.e. when $\nu> 0, \kappa=0$ (parabolic-hyperbolic coupling), $(ii) B_{0, \kappa},$ i.e., when $\nu=0, \kappa > 0$ (hyperbolic-parabolic coupling) and $(iii) B_{0, 0},$ i.e., when $\nu=0=\kappa$ (hyperbolic-hyperbolic coupling). 

One of the main mathematical motivations for studying the above three cases is that the semi-dissipative and the ideal Boussinesq systems are among the most commonly used models to understand the vortex stretching effect for three-dimensional incompressible flows. There are similarities between the three-dimensional Euler equations and the Boussinesq equations with respect to the problem of breakdown of smooth solutions in finite time. In the celebrated paper of Beale-Kato-Majda \cite{BKM}, it was shown that if a solution to the three-dimensional Euler equations is smooth at the initial stage and loses its regularity after some finite time, then the bound on $L^{\infty}$-norm of vorticity inhibits the breakdown of smooth solutions. 

In two-dimensions, well-posedness of the Boussinesq system in the above three cases has been studied by several authors under different assumptions on the initial data (e.g. see \cite{AH, Dc, CKN, CN, DP, DP1, HK, HL}, to name a few). It is worth to note (see \cite{Dc}) that the systems $B_{\nu, 0}$ and $B_{0, \kappa}$ have unique global solutions in two-dimensions provided the initial data $(\mathbf{u_0}, \mathbf{\theta}_0)\in\H^{m}(\mathbb R^{2})\times H^{m}(\mathbb R^{2})$ with $m\geq 3$, $m$ an integer. Similar result for the system $B_{\nu, 0}$ was obtained in \cite{HL} under lower regularity condition on the initial data for temperature, i.e. when $(\mathbf{u_0}, \mathbf{\theta}_0)\in\H^{m}(\mathbb R^{2})\times H^{m-1}(\mathbb R^{2})$ with $m\geq 3$. Global well-posedness results for $B_{\nu, 0}$ with initial data in suitable Besov and Lorentz spaces are due to \cite{HK} and \cite{DP, DP2} respectively. On the other hand, for the system $B_{0, 0}$ only local-in-time existence results are available even in two-dimensions. It was proved in \cite{CN} that if the initial data $(\mathbf{u_0}, \mathbf{\theta}_0)\in\H^{3}(\mathbb R^{2})\times H^{3}(\mathbb R^{2})$, then local-in-time classical solutions exist and is unique. Moreover,  Beale-Kato-Majda type criterion for blow-up of smooth solutions is established in \cite{CN}. More precisely, they proved that the smooth solution exists on $[0, T]$ if and only if $\nabla\mathbf\theta\in L^1 (0, T; L^{\infty}(\mathbb{R}^2))$.

In three-dimensions, a very few local-in-time existence results and blow-up criterion are available (e.g. see \cite{GF, IM, QDY, Ye}). However, in the very particular case of the axisymmetric initial data, global-in-time well-posedness has been proven in three-dimensions by  Abidi et. al. \cite{AHK}.

In this work, we consider the Boussinesq systems $B_{\nu, 0}$, $B_{0, \kappa}$ and  $B_{0, 0}$ in both two and three dimensions and prove local-in-time existence and uniqueness of the strong solutions when the initial data $(\mathbf{u_0}, \mathbf{\theta}_0)\in\H^{s}(\mathbb R^{n})\times H^{s}(\mathbb R^{n})$, where $s>n/2$ for $B_{\nu, 0}$, and $s>n/2+1$ for both $B_{0, \kappa}$ and  $B_{0, 0}, \ \ n=2, 3, s\in\mathbb{R}$. We, however, do not establish the global well-posedness for the systems $B_{\nu, 0}$, and $B_{0, \kappa}$ in two-dimensions as this result is known in the literature due to Chae\cite{Dc}. We thereafter investigate the blow-up criteria in two-dimensions for the system $B_{0, 0}$ and in three-dimensions for the systems $B_{\nu, 0}$, $B_{0, \kappa}$ and  $B_{0, 0}$. We prove when $s>n/2+1$, $BMO$-norm of the vorticity and that of the gradient of the temperature (i.e. $\nabla\times\mathbf u, \nabla\mathbf\theta \in L^1(0, T; BMO)$) control the breakdown of smooth solutions of the above systems.  However, we later show that under suitable additional assumption on $\mathbf{\theta}_0$, one can completely relax the condition on gradient of the temperature and the condition $\nabla\times\mathbf u\in L^1(0, T; BMO)$ alone can ensure that the smooth solution persists.

To be precise, we will prove the following main results in this paper.

\begin{main}\label{mt1}
Let $s\in\mathbb{R}$ and $\nabla\cdot\mathbf{u_0}=0$ and  $f\in L^{\infty} \left([0, T] ; H^s(\mathbb R^n)\right), \ n = 2, 3.$  
\begin{itemize}
\item If $(\mathbf{u_0}, \mathbf{\theta}_0)\in\H^{s}(\mathbb R^{n})\times H^{s}(\mathbb R^{n}),\ s>\frac{n}{2}$, then there exists a unique strong solution $(\mathbf{u}, \mathbf{\theta})$ to the Boussinesq system $B_{\nu, 0}$ with the regularity $\mathbf{u}\in C([0, \tilde{T}]; \H^{s}(\mathbb R^{n})) \cap L^{2}([0, \tilde{T}];  \H^{s+1}(\mathbb R^{n}))$ and $\mathbf{\theta} \in C([0, \tilde{T}]; H^{s}(\mathbb R^{n}))$ for some finite time $\tilde{T} = \tilde{T}(s, \nu, \|u_0\|_{\H^{s}}, \|\theta_0\|_{H^{s}}) > 0$.
\vskip.1in
\item If $(\mathbf{u_0}, \mathbf{\theta}_0)\in\H^{s}(\mathbb R^{n})\times H^{s}(\mathbb R^{n}),\ s>\frac{n}{2}+1$, then there exists a unique strong solution $(\mathbf{u}, \mathbf{\theta})$ to the Boussinesq system $B_{0, \kappa}$ with the regularity $\mathbf{u}\in C([0, \tilde{T}]; \H^{s}(\mathbb R^{n}))$ and $\mathbf{\theta} \in C([0, \tilde{T}]; H^{s}(\mathbb R^{n}))\cap L^{2}([0, \tilde{T}];  H^{s+1}(\mathbb R^{n}))$ for some finite time $\tilde{T} = \tilde{T}(s, \kappa, \|u_0\|_{\H^{s}}, \|\theta_0\|_{H^{s}}) > 0$.
\vskip.1in
\item If $(\mathbf{u_0}, \mathbf{\theta}_0)\in\H^{s}(\mathbb R^{n})\times H^{s}(\mathbb R^{n}),\ s>\frac{n}{2}+1$, then there exists a unique strong solution $(\mathbf{u}, \mathbf{\theta})$ to the Boussinesq system $B_{0, 0}$ with the regularity $\mathbf{u}\in C([0, \tilde{T}]; \H^{s}(\mathbb R^{n}))$ and $\mathbf{\theta} \in C([0, \tilde{T}]; H^{s}(\mathbb R^{n}))$ for some finite time $\tilde{T} = \tilde{T}(s, \|u_0\|_{\H^{s}}, \|\theta_0\|_{H^{s}}) > 0$.
\end{itemize}
Moreover, if $s > \frac{n}{2} + 1, f=e_n$ and $(\mathbf{u_0}, \mathbf \theta_0) \in \H^{s}(\mathbb R^{n})\times \left(H^s(\mathbb R^{n}) \cap W^{1, p}(\mathbb R^{n})\right),$ $2 \leq p \leq \infty$, then the condition
\[\int_{0}^{\tilde T} \|\nabla \times \mathbf{u}(\tau)\|_{BMO} \,d\tau < \infty\]
ensures that the solution $(\mathbf{u, \theta})$  to all the three Boussinesq systems $B_{\nu, 0}$ (in three-dimensions), $B_{0, \kappa}$ (in three-dimensions) and $B_{0, 0}$ (in both two and three-dimensions)  can be extended continuously to $[0, T]$ for some $T > \tilde T.$
\end{main}

\noindent 
Construction of the paper is as follows. After giving definitions and properties of various function spaces and inequalities in the next section, we start investigating about the Boussinesq system $B_{\nu, 0}$ in Section 3. 
At first, we consider the Fourier truncated system $B_{\nu, 0}$ on the whole of $\mathbb{R}^n$, and show that the solutions $(\mathbf u^{\textit R}, \mathbf \theta^{\textit R})$ of some smoothed version of  the Boussinesq system $B_{\nu, 0}$ exist. In subsection \ref{EM}, we show that the $H^s$-norm of $(\mathbf u^{\textit R}, \mathbf \theta^{\textit R})$ are uniformly bounded up to a terminal time $\tilde T$ which is independent of $R$. In the following subsection \ref{EU}, we prove that up to the blowup time, the solution $(\mathbf u^{\textit R}, \mathbf \theta^{\textit R})$ is a Cauchy sequence in the $L^2$-norm as $R \to \infty$, and by using Sobolev interpolation, $(\mathbf u^{\textit R}, \mathbf \theta^{\textit R}) \to (\mathbf{u}, \mathbf{\theta})$ in any $H^{s'}$ for $0 < s' < s$. Finally we provide the proof of the local-in-time existence and uniqueness of the strong solution in Theorem \ref{mt1}.	In subsection \ref{BC}, we establish that the $BMO$ norms of the vorticity and gradient of temperature control the breakdown of smooth solutions. In the later part of that subsection, we prove that the condition on the gradient of temperature can be completely relaxed under suitable assumption on the regularity of the initial temperature. Commutator estimates due to Kato and Ponce \cite{KP} play crucial role in this part of the calculations.  Finally in Section 4, we focus on the other two interesting Boussinesq systems, i.e. on  $B_{0, \kappa}$ and $B_{0, 0}$, and establish similar results (see Theorems \ref{NVBE1}, \ref{NVBE2}, \ref{IBE1}, \ref{IBE2}) as obtained for the system $B_{\nu, 0}$ in Section 3.  

\section{Mathematical framework}

We consider the evolution for positive times and the spatial variable belongs to $\mathbb R^{n}$ for $n = 2, 3.$

\subsection{Fractional Derivative Operators.}

For $s \in \mathbb R,$ let $\Lambda^{s}$ denote the Riesz potential of order $s$ which is equivalent to the operator $(-\Delta)^{s/2}$, where $\Delta$ is the Laplace operator and the fractional power is defined using Fourier transform.

\noindent
We define this fractional derivative operator in terms of Fourier transformation as follows
\[ {\mathcal F} \left[ \Lambda^{s} f \right](\xi) = |\xi|^{s} \widehat f(\xi) .\]

\noindent
The operator $\Lambda^{s}$ is also known as the Zygmund operator.

\noindent
The norm on  Homogeneous Sobolev space, i.e. $\dot{H}^{s}(\mathbb R^{n})$, is given by
\begin{align}\label{hsd}
\| f\|_{\dot{H}^{s}} = \left( \int_{\mathbb R^{n}} \left[ |\xi|^s |\widehat f(\xi)| \right]^2 \right)^{1/2} = \left\| |\xi|^s \widehat f \right\|_{L^2} = \| \Lambda^s f\|_{L^2}.
\end{align} 

\noindent
and the inner product on $\dot{H}^{s}(\mathbb{R}^n)$ is given by
\begin{align*}
(f, g)_{\dot{H}^{s}} = \left( |\xi|^s \widehat f(\xi), |\xi|^s \widehat g(\xi) \right)_{L^2} &= ({\mathcal F} \left[ \Lambda^s f \right](\xi), {\mathcal F} \left[ \Lambda^s g \right](\xi))_{L^2} \nonumber \\ &= \left( \Lambda^s f, \Lambda^s g \right)_{L^2}.
\end{align*}

Let $J^s$ denote the Bessel potential of order $s$ which is equivalent to the operator $(I - \Delta)^{s/2}$. We define it in terms of Fourier transformation as follows:
\[ {\mathcal F} \left[ J^{s} f \right](\xi) = (1 + |\xi|^2)^{s/2} \widehat f(\xi) .\]

\noindent
The norm on ${H}^{s}(\mathbb R^{n})$ is given by
\begin{align}\label{hs}
\| f\|_{H^{s}} = \left( \int_{\mathbb R^{n}} \left[ (1 + |\xi|^2)^{s/2} |\widehat f(\xi)| \right]^2 \right)^{1/2} &= \left\| (1 + |\xi|^2)^{s/2} \widehat f(\xi)\right\|_{L^2} \nonumber \\ &= \| J^s f\|_{L^2},
\end{align}

\noindent
and the inner product on $H^{s}(\mathbb R^{n})$ is given by
\begin{align*}
(f, g)_{H^{s}} = \left( (1 + |\xi|^2)^{s/2} \widehat f(\xi), (1 + |\xi|^2)^{s/2} \widehat g(\xi) \right)_{L^2} &= ({\mathcal F} \left[ J^s f \right](\xi), {\mathcal F} \left[ J^s g \right](\xi))_{L^2} \nonumber \\ &= \left( J^s f, J^s g \right)_{L^2}.
\end{align*}

Clearly, we have the inclusion $H^{s}(\mathbb R^{n}) \subset \dot{H}^{s}(\mathbb R^{n})$, and $H^s = \dot{H}^{s} \cap L^2.$

\begin{remark}\label{gradhs}
One of the basic differences in properties of the above defined operators is the following:
\[\| \Lambda^{s-1} \nabla f\|_{L^2} = \left\| |\xi|^{s-1} \widehat {\nabla f}(\xi) \right\|_{L^2} = \left\| |\xi|^{s-1}\left. i\xi\right. \widehat f(\xi)\right\|_{L^2} = \left\| |\xi|^{s} \widehat f(\xi)\right\|_{L^2} = \| \Lambda^s f\|_{L^2}.\]
Therefore,
\[ \| \nabla f\|_{\dot{H}^{s-1}} = \|f\|_{\dot{H}^{s}}.\]
Whereas,
\begin{align*}
\| J^{s-1} \nabla f\|_{L^2} = \left\| (1 + |\xi|^2)^{\frac{s-1}{2}} \widehat {\nabla f}(\xi) \right\|_{L^2} &= \left\| (1 + |\xi|^2)^{\frac{s-1}{2}}\left. i\xi\right. \widehat f(\xi)\right\|_{L^2} \\ &\leq \left\| (1 + |\xi|^2)^{\frac{s}{2}} \widehat f(\xi)\right\|_{L^2} = \| J^s f\|_{L^2},
\end{align*}
implies
 \[\| \nabla f\|_{{H}^{s-1}} \leq \|f\|_{{H}^{s}}.\]
\end{remark}

\begin{remark}\label{prodhs}
If $s > n/2$, then each $f \in H^{s}(\mathbb R^{n})$ is bounded and continuous and hence 
\[ \|f\|_{L^{\infty}(\mathbb R^{n})} \leq C \|f\|_{H^s(\mathbb R^{n})}, \left. \right. for\left. \right. s > n/2.\]
Also, note that $H^s$ is an algebra for $s > n/2$, i.e., if $f, g \in H^{s}(\mathbb R^{n})$, then $fg \in H^{s}(\mathbb R^{n})$, for $s > n/2$. Hence, we have
\[ \| fg\|_{H^s} \leq C \| f\|_{H^s} \| g\|_{H^s}, \left. \right. for\left. \right.s > n/2.\]
\end{remark}

We define the spaces
\[L^{2}_{\sigma}(\mathbb R^{n}) = \left\{ f \in L^2 (\mathbb R^{n}) : \nabla \cdot f = 0 \right\}.\] and \[H^{s}_{\sigma}(\mathbb R^{n}) = \left\{ f \in H^{s}(\mathbb R^{n}) : \nabla \cdot f = 0 \right\}.\]

\begin{remark}\label{div}
Fix $s > n/2$ and let $f \in H^{s}_{\sigma}$ and $g \in H^s$. Then
\[ \| (f \cdot \nabla) g \|_{H^{s-1}} \leq C \|f\|_{\H^s} \|g\|_{H^s}.\]
\end{remark}

\begin{proof}
Being in $H^{s}_{\sigma},$ $f$ is divergence free, $(f \cdot \nabla) g = \nabla \cdot(f \otimes g)$.
And $H^s$ is an algebra for $s >n/2$,
\[\| (f \cdot \nabla) g \|_{H^{s-1}} = \| \nabla \cdot(f \otimes g)\|_{H^{s-1}} \leq C \| f \otimes g\|_{H^s} \leq C \|f\|_{\H^s} \|g\|_{H^s}.\]
\end{proof}

\begin{lemma}\label{Si}
$($Sobolev Inequality$)$ For $f \in H^{s}(\mathbb R^{n})$, we have
\[ \|f\|_{L^{q}(\mathbb R^{n})} \leq C_{n, s, q} \|f\|_{H^s(\mathbb R^{n})}\]
provided that q lies in the following range
\begin{enumerate}
\item[(i)]
if $s < n/2$, then $2 \leq q \leq \frac{2n}{n - 2s}$.
\item[(ii)]
if $s = n/2$, then $2 \leq q < \infty$.
\item[(iii)]
if $s > n/2$, then $2 \leq q \leq \infty$.
\end{enumerate}
\end{lemma}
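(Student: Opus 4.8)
The plan is to treat the three regimes separately, reducing each to one of two endpoint estimates — the $L^{\infty}$ bound when $s>n/2$ and the critical Sobolev bound at $q=2n/(n-2s)$ when $s<n/2$ — and then filling in the remaining exponents by $L^{p}$-interpolation. Throughout I assume $s\ge 0$, as required for the embedding to hold. First, the case $q=2$ is immediate: by Plancherel and \eqref{hs}, $\|f\|_{L^2}=\||\xi|^{0}\widehat f\|_{L^2}\le \|(1+|\xi|^2)^{s/2}\widehat f\|_{L^2}=\|f\|_{H^s}$.

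Next I would dispatch the top endpoint in case (iii), namely $q=\infty$. By Fourier inversion $|f(x)|\le\|\widehat f\|_{L^1}$, and splitting off the Bessel weight followed by Cauchy--Schwarz gives
\[\|\widehat f\|_{L^1}=\int_{\mathbb R^n}(1+|\xi|^2)^{-s/2}(1+|\xi|^2)^{s/2}|\widehat f(\xi)|\,d\xi\le\Big(\int_{\mathbb R^n}(1+|\xi|^2)^{-s}\,d\xi\Big)^{1/2}\|J^{s}f\|_{L^2}.\]
The first factor is finite precisely when $2s>n$, i.e. $s>n/2$, yielding $\|f\|_{L^\infty}\le C\|f\|_{H^s}$. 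The intermediate exponents $2<q<\infty$ in case (iii) then follow from the interpolation inequality $\|f\|_{L^q}\le\|f\|_{L^2}^{2/q}\|f\|_{L^\infty}^{1-2/q}$.

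The heart of the argument is the critical endpoint $q^{\ast}=2n/(n-2s)$ in case (i), where $0<s<n/2$ (the subcase $s=0$ being the trivial $q=2$ bound). Here I would represent $f$ through the Riesz potential: on the Fourier side $\widehat f(\xi)=|\xi|^{-s}\,\widehat{\Lambda^{s}f}(\xi)$, so $f=c_{n,s}\,|\cdot|^{s-n}\ast(\Lambda^{s}f)$ with $\Lambda^{s}f\in L^2$ and $\|\Lambda^{s}f\|_{L^2}=\|f\|_{\dot{H}^{s}}\le\|f\|_{H^s}$ by \eqref{hsd}. Applying the Hardy--Littlewood--Sobolev inequality with order $\alpha=s$ and $p=2$, whose target exponent $1/q^{\ast}=1/2-s/n$ satisfies $1<2<q^{\ast}<\infty$ since $0<s<n/2$, yields $\|f\|_{L^{q^{\ast}}}\le C\|\Lambda^{s}f\|_{L^2}\le C\|f\|_{H^s}$. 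Interpolating between $L^2$ and $L^{q^{\ast}}$ then covers all $2\le q\le q^{\ast}$, completing case (i). Finally, case (ii) ($s=n/2$) reduces to case (i): given $2<q<\infty$, set $s'=n/2-n/q\in(0,n/2)$, for which $q$ is exactly the critical exponent $2n/(n-2s')$; since $s=n/2>s'$ we have the continuous inclusion $H^{s}\hookrightarrow H^{s'}$, and case (i) gives $\|f\|_{L^q}\le C\|f\|_{H^{s'}}\le C\|f\|_{H^s}$.

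The step I expect to be the main obstacle — and the very reason the range in case (i) is \emph{closed} at $q^{\ast}$ — is precisely this endpoint convolution estimate. A naive bound via Young's inequality, writing $f$ as the convolution of $\Lambda^{s}f\in L^2$ with the kernel $|\cdot|^{s-n}$, fails at $q=q^{\ast}$: one computes that the relevant Young exponent is $r=n/(n-s)$, and the kernel $|\cdot|^{s-n}$ lies only in weak-$L^{n/(n-s)}$ and not in $L^{n/(n-s)}$, so Young would deliver only the open range $q<q^{\ast}$. Capturing the endpoint forces the genuinely stronger Hardy--Littlewood--Sobolev (weak-type convolution) inequality, and it is this single endpoint that then propagates, through interpolation and the inclusion $H^{s}\hookrightarrow H^{s'}$, to the full admissible ranges in all three cases.
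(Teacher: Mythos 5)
Your proof is correct and complete. Note, however, that the paper does not actually prove this lemma: it states it and defers entirely to the literature (``a special case of the known generalized result, see Theorem 2.4.5 of \cite{Ks}''), so there is no in-paper argument to compare against. What you supply is the standard self-contained treatment: Plancherel for $q=2$; the Fourier-inversion/Cauchy--Schwarz bound $\|f\|_{L^{\infty}}\le \big(\int_{\mathbb R^n}(1+|\xi|^2)^{-s}\,d\xi\big)^{1/2}\|J^{s}f\|_{L^2}$, correctly identifying $2s>n$ as the convergence condition; the Riesz-potential representation plus Hardy--Littlewood--Sobolev for the closed endpoint $q^{\ast}=2n/(n-2s)$ when $0<s<n/2$ (the exponent arithmetic $1/q^{\ast}=1/2-s/n$ and the admissibility $1<2<q^{\ast}<\infty$ both check out); Lebesgue interpolation for the intermediate exponents; and the reduction of the critical case $s=n/2$ to the subcritical one via $s'=n/2-n/q$ together with $H^{n/2}\hookrightarrow H^{s'}$, where the constant rightly degenerates as $q\to\infty$, consistent with the failure of $H^{n/2}\hookrightarrow L^{\infty}$. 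One small inaccuracy in your closing aside: Young's inequality with the homogeneous kernel $|\cdot|^{s-n}$ fails for \emph{every} $q$, not merely at the endpoint, because that kernel lies in no global $L^{r}$ --- integrability at the origin forces $r<n/(n-s)$ while integrability at infinity forces $r>n/(n-s)$. To recover even the open range $q<q^{\ast}$ by Young one must either split the kernel into near/far pieces or convolve instead with the Bessel kernel of $J^{-s}$, which does lie in $L^{r}$ for $1\le r<n/(n-s)$. This is a heuristic remark only and does not affect your argument, which correctly routes the endpoint through Hardy--Littlewood--Sobolev.
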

\noindent The above lemma is a special case of the known generalized result  (e.g. see Theorem 2.4.5 of \cite{Ks}). 
\begin{remark}\label{r23}
We deduce the following result using Lemma \ref{Si}. For n = 2, we use  H\"{o}lder's inequality with exponents 2/$\epsilon$ and 2/(1-$\epsilon$), and Sobolev inequality for 0 $<$ $\epsilon$ $<$ s-1 to obtain
\[ \|fg\|_{L^2} \leq \|f\|_{L^{2/ {\epsilon}}}\|g\|_{L^{2/1-{\epsilon}}} \leq C \|f\|_{{\dot{H}}^{1-{\epsilon}}} \|g\|_{{\dot{H}}^{\epsilon}} \leq C \|f\|_{H^{1-{\epsilon}}} \|g\|_{H^{\epsilon}} \leq C \|f\|_{H^1} \|g\|_{{H}^{s-1}}.\]
For n = 3, we use H\"{o}lder's inequality with exponents 6 and 3, and Sobolev inequality to obtain
\[ \|fg\|_{L^2} \leq \|f\|_{L^6} \|g\|_{L^3} \leq C \|f\|_{{\dot{H}}^{1}} \|g\|_{{\dot{H}}^{1/2}} \leq C \|f\|_{H^1} \|g\|_{H^{1/2}} \leq C \|f\|_{H^1} \|g\|_{{H}^{s-1}}.\]
Note that, for both the two and three dimensions, we obtain the same bounds.
\end{remark}

\begin{lemma}\label{iss}
$($Interpolation in Sobolev spaces$)$. Given $s > 0,$ there exists a constant C depending on s, so that for all $f \in H^{s}(\mathbb R^{n})$ and $0 < s' < s,$
\[\|f\|_{H^{s'}} \leq C \|f\|_{L^2}^{1-s'/s}\|f\|^{s'/s}_{H^{s}}.\]
\end{lemma}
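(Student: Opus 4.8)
The plan is to move everything to the Fourier side, where all three quantities in the inequality become weighted $L^2$-norms of $\widehat f$, and then to recognize the claim as a single application of H\"older's inequality. Using the definition \eqref{hs} of the $H^s$-norm together with Plancherel's identity for the $L^2$-norm, I would first record
\[
\|f\|_{H^{s'}}^2 = \int_{\mathbb R^n} (1+|\xi|^2)^{s'} |\widehat f(\xi)|^2 \, d\xi, \quad \|f\|_{H^{s}}^2 = \int_{\mathbb R^n} (1+|\xi|^2)^{s} |\widehat f(\xi)|^2 \, d\xi, \quad \|f\|_{L^2}^2 = \int_{\mathbb R^n} |\widehat f(\xi)|^2 \, d\xi.
\]
The whole proof then amounts to interpolating the intermediate weight $(1+|\xi|^2)^{s'}$ between the endpoint weights $(1+|\xi|^2)^{s}$ and $1$.

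Next I would set $\theta = s'/s$, so that $\theta \in (0,1)$ precisely because $0 < s' < s$, and exploit the elementary pointwise factorization of the Bessel weight,
\[
(1+|\xi|^2)^{s'} = \left[(1+|\xi|^2)^{s}\right]^{\theta},
\]
which lets me split the integrand defining $\|f\|_{H^{s'}}^2$ as
\[
(1+|\xi|^2)^{s'} |\widehat f(\xi)|^2 = \left[(1+|\xi|^2)^{s} |\widehat f(\xi)|^2\right]^{\theta} \cdot \left[|\widehat f(\xi)|^2\right]^{1-\theta}.
\]
Applying H\"older's inequality with the conjugate exponents $1/\theta$ and $1/(1-\theta)$ then yields
\[
\|f\|_{H^{s'}}^2 \leq \left(\int_{\mathbb R^n} (1+|\xi|^2)^{s} |\widehat f(\xi)|^2 \, d\xi\right)^{\theta} \left(\int_{\mathbb R^n} |\widehat f(\xi)|^2 \, d\xi\right)^{1-\theta} = \|f\|_{H^s}^{2\theta} \|f\|_{L^2}^{2(1-\theta)}.
\]
Taking square roots and substituting back $\theta = s'/s$ produces exactly the asserted bound.

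I do not expect a genuine obstacle here: the argument is soft and the only point demanding attention is verifying that $1/\theta$ and $1/(1-\theta)$ are indeed H\"older-conjugate and that $\theta \in (0,1)$ guarantees both exponents exceed one, so that H\"older applies in the stated form. It is worth remarking that the estimate in fact holds with constant $C = 1$, independent of $s$, $s'$, and the dimension $n$; the $s$-dependent constant permitted in the statement is therefore only a convenience and is not forced by the proof. An equivalent route, should one prefer to avoid invoking H\"older directly, is to apply Young's inequality $ab \leq \theta\, a^{1/\theta} + (1-\theta)\, b^{1/(1-\theta)}$ pointwise to the factored integrand and integrate, which gives the same homogeneity after a scaling optimization in $f$.
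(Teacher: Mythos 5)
Your proof is correct, and the main H\"older argument is complete: the pointwise factorization $(1+|\xi|^2)^{s'}|\widehat f(\xi)|^2 = [(1+|\xi|^2)^{s}|\widehat f(\xi)|^2]^{\theta}[|\widehat f(\xi)|^2]^{1-\theta}$ with $\theta=s'/s\in(0,1)$ and conjugate exponents $1/\theta$, $1/(1-\theta)$ gives exactly the stated bound, indeed with $C=1$. The paper does not prove this lemma at all; it defers to Theorem 9.6 and Remark 9.1 of Lions--Magenes, where the result is obtained within the abstract framework of interpolation of Hilbert scales via the spectral theorem for the positive self-adjoint operator defining the scale. Your route is the concrete Fourier-multiplier specialization of that argument and is more elementary and self-contained for $H^s(\mathbb R^n)$, which is all the paper needs. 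One small caution about your closing remark: the alternative via the pointwise Young inequality cannot be finished by ``a scaling optimization in $f$,'' since both sides of the claimed inequality are homogeneous of degree one in $f$ and rescaling $f$ changes nothing; the standard fix is to insert an auxiliary parameter $\lambda>0$ into the weight split, writing $(1+|\xi|^2)^{s'}=[\lambda(1+|\xi|^2)^{s}]^{\theta}\lambda^{-\theta}$, integrating, and then optimizing over $\lambda$. This does not affect the validity of your primary H\"older-based proof.
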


\noindent For details see \cite{AF} and for proof see Theorem 9.6, Remark 9.1 of \cite{LM}.

\subsection{Fourier Truncation Operator.}\label{FTO}

Let us define the Fourier truncation operator $\mathcal S_{\textit{R}}$ as follows:
\[ \widehat{{\mathcal S_{\textit{R}}} f}(\xi) := \mathbf 1_{B_{R}}(\xi)\widehat f(\xi),\]
where $B_R$, a ball of radius $R$ centered at the origin and  $\mathbf 1_{B_{R}}$ is the indicator function. Then we infer the following important properties:

\begin{enumerate}
\item 
$\|{\mathcal S_{\textit{R}}} f \|_{H^{s}(\mathbb R^{n})} \leq C \|f \|_{H^{s}(\mathbb R^{n})}.$

where $C$ is a constant independent of $R$.
\item
$\| {\mathcal S_{\textit{R}}} f  - f\|_{H^{s}(\mathbb R^{n})} \leq  \frac{C}{R^{k}} \|f \|_{H^{s+k}(\mathbb R^{n})}.$
\item
$\| ({\mathcal S_{\textit{R}}} - {\mathcal S_{\textit R'}}) f\|_{H^{s}} \leq C \left. \max\left\{ \left( \frac{1}{R} \right)^k , \left( \frac{1}{R'} \right)^k \right\} \right. \|f \|_{H^{s+k}}.$
\end{enumerate}
\noindent For the details of the proof of the properties see \cite{Fe}.

\subsection{Commutator Estimates.}

Let $f$ and $g$ are Schwartz class functions, then for $s \geq 0$, we define,
\[[J^s, f]g = J^s(fg) - f(J^{s}g),\]
and
\begin{align}\label{cef}
[J^s, f] \nabla g = J^s[(f \cdot \nabla)g] - (f \cdot \nabla)(J^{s}g),
\end{align}
where $[J^s, f] = J^{s}f - f J^{s}$ is the commutator, in which $f$ is regarded as a multiplication operator. Then we have the following celebrated commutator estimate due to Kato and Ponce \cite{KP}.
\begin{lemma}
 For $s \geq 0,$ and $1 < p < \infty$,
\[\|[J^s, f]g\|_{L^p} \leq C\left( \|\nabla f\|_{L^{\infty}}\|J^{s-1}g\|_{L^p} + \|J^{s}f\|_{L^p}\|g\|_{L^{\infty}}\right),\]
where C is a constant depending only on $n, p, s.$
\end{lemma}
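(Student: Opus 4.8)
The plan is to prove the commutator bound by a paradifferential decomposition, isolating the single frequency interaction in which $J^s$ falls entirely on $g$ at a frequency comparable to that of $f$; this is exactly the interaction the commutator is designed to remove, and controlling its leading contribution is what produces the $\|\nabla f\|_{L^\infty}$ factor. I would introduce the (inhomogeneous) Littlewood--Paley projectors $\Delta_j$, with frequency support in the annulus $|\xi|\sim 2^j$, together with the low-frequency cut-offs $S_j=\sum_{j'<j}\Delta_{j'}$, so that $f=\sum_j \Delta_j f$ and likewise for $g$. Using Bony's decomposition I would expand the product as two paraproducts and a remainder,
\[
fg=\sum_j S_{j-1}f\,\Delta_j g+\sum_j \Delta_j f\,S_{j-1}g+\sum_j \Delta_j f\,\widetilde{\Delta}_j g,
\]
apply $J^s$, and absorb the subtracted term $f\,J^s g$ into the matching frequency families. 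Because the map $(f,g)\mapsto [J^s,f]g$ is bilinear, this reorganizes the commutator into three families of terms indexed by whether $f$ sits at lower, higher, or comparable frequency to $g$, each of which contains a genuine local commutator $[J^s,\cdot]$ or a localized remainder.

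The low-high family $\sum_j [J^s,\,S_{j-1}f]\,\Delta_j g$ is the heart of the argument. Here $S_{j-1}f$ is spectrally supported in $|\xi|\lesssim 2^j$ while $\Delta_j g$ lives at $|\xi|\sim 2^j$, so the output is localized near frequency $2^j$ and I can work with the symbol $(1+|\xi|^2)^{s/2}$ frozen there. Writing the commutator through its convolution kernel and Taylor-expanding the symbol to first order, the zeroth-order term cancels exactly, and what survives carries one spatial derivative of $S_{j-1}f$ against a symbol of order $s-1$ acting on $\Delta_j g$. Quantitatively this is the per-block estimate
\[
\bigl\| [J^s,\,S_{j-1}f]\,\Delta_j g\bigr\|_{L^p}\le C\,2^{j(s-1)}\,\|\nabla f\|_{L^\infty}\,\|\Delta_j g\|_{L^p},
\]
which I would derive either from the Coifman--Meyer commutator theorem applied to the frozen symbol, or more elementarily from a direct kernel bound for $[J^s,\,S_{j-1}f]$ restricted to frequency $2^j$. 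Since each output is itself localized near $2^j$, summing in $j$ through the Littlewood--Paley square-function characterization of $L^p$ (valid for $1<p<\infty$ by vector-valued Calder\'on--Zygmund theory) converts the per-block bounds into the square function of $\bigl\{2^{j(s-1)}\Delta_j g\bigr\}$, i.e. into $\|\nabla f\|_{L^\infty}\,\|J^{s-1}g\|_{L^p}$, the first term on the right.

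For the high-low family $\sum_j[J^s,\,\Delta_j f]\,S_{j-1}g$ and the high-high remainder, the operator $J^s$ now acts predominantly on the high-frequency factor $\Delta_j f$, so $J^s(\Delta_j f\,S_{j-1}g)\sim (J^s\Delta_j f)\,S_{j-1}g$ up to a genuinely lower-order remainder, while the subtracted piece $\Delta_j f\,J^s(S_{j-1}g)$ carries a symbol evaluated at the lower frequency and is of lower order in this regime. Using the uniform boundedness of $S_{j-1}$ on $L^\infty$ to extract $\|g\|_{L^\infty}$ and summing again via the square function, these terms are controlled by $\|J^s f\|_{L^p}\|g\|_{L^\infty}$, the second term on the right. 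The main obstacle is the commutator gain displayed above: one must establish cleanly that the leading symbol contribution cancels and that the surviving term genuinely costs only one derivative of $f$ while lowering the order on $g$ by one, \emph{uniformly in $j$ and down to the low-frequency block} where the inhomogeneous symbol $(1+|\xi|^2)^{s/2}$ departs from the homogeneous $|\xi|^s$. Because $J^s$ is smooth and of polynomial-symbol type near $\xi=0$, this low-frequency region is in fact the benign one, so once the first-order Taylor cancellation is made rigorous the estimate follows from the standard pseudodifferential / Coifman--Meyer machinery.
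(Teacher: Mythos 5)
The paper does not actually prove this lemma: the authors state it and defer entirely to the appendix of Kato and Ponce \cite{KP}, so the relevant comparison is with that original argument, and your proposal takes a genuinely different (and essentially correct) route to the same estimate. Kato--Ponce work on the bilinear symbol side: they write $[J^s,f]g$ as a bilinear multiplier with symbol $(1+|\xi+\eta|^2)^{s/2}-(1+|\eta|^2)^{s/2}$ acting on $\widehat f(\xi)\,\widehat g(\eta)$, split the frequency plane into the region $|\xi|\lesssim|\eta|$ --- where the mean value theorem gives the symbol bound $C|\xi|(1+|\eta|^2)^{(s-1)/2}$, producing the $\|\nabla f\|_{L^\infty}\|J^{s-1}g\|_{L^p}$ term --- and the complementary region, which yields $\|J^s f\|_{L^p}\|g\|_{L^\infty}$; each piece is then handled by the Coifman--Meyer multiplier theorem. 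Your Bony-decomposition argument is the Littlewood--Paley incarnation of exactly this mechanism: the first-order Taylor cancellation in your low-high blocks, giving the per-block factor $2^{j(s-1)}\|\nabla f\|_{L^\infty}$, is precisely KP's mean value theorem step, and your square-function summation replaces their black-box appeal to Coifman--Meyer. The dyadic route is more transparent about where each norm arises and generalizes readily (fractional Leibniz rules, Besov-scale refinements), at the cost of bookkeeping you should not elide in a full write-up: (i) the high-high remainder family has outputs supported in balls rather than annuli, so its summation costs a geometric factor that converges only for $s>0$ (harmless here, since $[J^0,f]g=0$ identically, but worth stating); and (ii) the subtracted high-low piece $\Delta_j f\,J^sS_{j-1}g$ must be controlled via the Bernstein-type bound $\|J^sS_{j-1}g\|_{L^\infty}\le C\,2^{js}\|g\|_{L^\infty}$ before the vector-valued summation applies --- both standard steps that your sketch correctly gestures at. In short, KP's approach buys brevity once the Coifman--Meyer theorem is granted; yours is longer but self-contained modulo vector-valued Calder\'on--Zygmund theory, and both hinge on the identical first-order symbol cancellation.
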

\noindent For proof see the appendix of \cite{KP}.\\
\noindent
In particular, for $p=2$, $s>n/2$ and $(\mathbf u$, $\mathbf \theta)$ $\in$ $\H^s(\mathbb R^n) \times H^{s}(\mathbb R^{n})$, we have
\begin{align}\label{ce}
\|J^s\left[(\mathbf u \cdot \nabla)\mathbf \theta\right] - (\mathbf u \cdot \nabla)(J^s\theta)\|_{L^2}&\leq c \|\nabla\mathbf u\|_{L^{\infty}} \|\theta\|_{H^s} + \|\mathbf u\|_{H^s}\|\nabla\mathbf \theta\|_{L^{\infty}}\\
&\leq c \|\nabla\mathbf u\|_{H^s} \|\theta\|_{H^s} + \|\mathbf u\|_{H^s}\|\nabla\mathbf \theta\|_{H^s}.\nonumber
\end{align}
Fefferman et. al. obtained a refined version of the above estimate in \cite{Fe}, which will be useful in our context (e.g. when no bound is available for $\|\nabla\mathbf \theta\|_{H^s}$).
\begin{lemma}\label{lce}
Given s $>$ $\frac{n}{2}$, then there is a constant $c = c(n, s)$ such that, for all $\mathbf u$, $\mathbf \theta$ with $(\mathbf u$, $\mathbf \theta)$ $\in$ $\H^s(\mathbb R^n) \times H^{s}(\mathbb R^{n})$,
\begin{equation}
\| \Lambda^s\left[(\mathbf u \cdot \nabla)\mathbf \theta\right] - (\mathbf u \cdot \nabla)(\Lambda^s\theta)\|_{L^2}\leq c \|\nabla\mathbf u\|_{\H^s}\|\theta\|_{H^s}.
\end{equation}
\end{lemma}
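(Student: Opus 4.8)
The plan is to pass to the Fourier side, exploit the cancellation encoded in the difference $|\xi|^s-|\eta|^s$, and reduce everything to two convolution estimates via Young's inequality, using at each step that $s>n/2$ forces $\widehat f\in L^1$ whenever $f\in H^s$. Writing $\mathcal C:=\Lambda^s[(\mathbf u\cdot\nabla)\theta]-(\mathbf u\cdot\nabla)(\Lambda^s\theta)$ and taking Fourier transforms, a direct computation gives
\[\widehat{\mathcal C}(\xi)=i\int_{\mathbb R^n}\bigl(\eta\cdot\widehat{\mathbf u}(\xi-\eta)\bigr)\,\bigl(|\xi|^s-|\eta|^s\bigr)\,\widehat\theta(\eta)\,d\eta,\]
so that, by Plancherel, $\|\mathcal C\|_{L^2}$ is controlled by the $L^2_\xi$-norm of $\int|\widehat{\mathbf u}(\xi-\eta)|\,|\eta|\,\bigl||\xi|^s-|\eta|^s\bigr|\,|\widehat\theta(\eta)|\,d\eta$. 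Since $|\xi|^s\le(1+|\xi|^2)^{s/2}$ yields $\|\cdot\|_{\dot{H}^s}\le\|\cdot\|_{H^s}$, it suffices to produce the bound with $\|\nabla\mathbf u\|_{H^s}$ and $\|\theta\|_{H^s}$ on the right. The two tools I will use repeatedly are: (a) the mean value bound $\bigl||\xi|^s-|\eta|^s\bigr|\le s\,|\xi-\eta|\int_0^1|\eta+t(\xi-\eta)|^{s-1}\,dt$, legitimate since $s>n/2\ge1$ here; and (b) the Wiener-type estimate $\|\widehat f\|_{L^1}\le\|(1+|\cdot|^2)^{-s/2}\|_{L^2}\,\|f\|_{H^s}=C_{n,s}\|f\|_{H^s}$, whose weight is square-integrable precisely because $s>n/2$.

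Next I would split the $\eta$-integral, for each fixed $\xi$, into the high-temperature-frequency region $A=\{2|\xi-\eta|<|\eta|\}$ and its complement $A^c$. On $A$ one has $|\xi|\sim|\eta|$, so (a) sharpens to $\bigl||\xi|^s-|\eta|^s\bigr|\le C|\eta|^{s-1}|\xi-\eta|$ and the integrand is dominated by $\bigl(|\xi-\eta|\,|\widehat{\mathbf u}(\xi-\eta)|\bigr)\bigl(|\eta|^s|\widehat\theta(\eta)|\bigr)$; this is a convolution, and Young's inequality $\|g\ast h\|_{L^2}\le\|g\|_{L^1}\|h\|_{L^2}$ together with (b) applied to $\nabla\mathbf u$ (so that $\|\,|\cdot|\widehat{\mathbf u}\,\|_{L^1}=\|\widehat{\nabla\mathbf u}\|_{L^1}\le C\|\nabla\mathbf u\|_{H^s}$) yields the bound $C\,\|\nabla\mathbf u\|_{H^s}\|\theta\|_{\dot{H}^s}$. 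On $A^c$ one has $|\eta|\le2|\xi-\eta|$ and $|\xi|\le3|\xi-\eta|$, so the crude bound $\bigl||\xi|^s-|\eta|^s\bigr|\le C|\xi-\eta|^s$ combined with $|\eta|\le2|\xi-\eta|$ lets me place all $s+1$ derivatives on the velocity: the integrand is dominated by $\bigl(|\xi-\eta|^{s+1}|\widehat{\mathbf u}(\xi-\eta)|\bigr)|\widehat\theta(\eta)|$, and Young's inequality with (b) applied now to $\theta$ gives $C\,\|\mathbf u\|_{\dot{H}^{s+1}}\|\theta\|_{H^s}=C\,\|\nabla\mathbf u\|_{\dot{H}^s}\|\theta\|_{H^s}$ by Remark \ref{gradhs}. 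Summing the two regions and bounding homogeneous norms by inhomogeneous ones finishes the proof; note the divergence-free hypothesis on $\mathbf u$ is not actually needed for this estimate.

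The subtlety — and the reason the ordinary Kato–Ponce inequality \eqref{ce} is not good enough — is that here no derivative may be spent on $\theta$ beyond the $s$ we already control, since $\|\nabla\theta\|_{H^s}$ is unavailable. The whole argument therefore hinges on arranging, in each frequency regime, that the factor forced into $L^1$ (on the Fourier side) is the one we can afford: on $A$ it is $\nabla\mathbf u$, costing one derivative that is exactly supplied by the gain $|\xi|^s-|\eta|^s\sim|\eta|^{s-1}|\xi-\eta|$ coming from the commutator cancellation, while on $A^c$ it is $\theta$ itself, costing no derivative. The main thing to get right is thus the bookkeeping of which region licenses which placement of derivatives; once the commutator cancellation is used to convert the dangerous high-temperature-frequency interaction into a single velocity derivative, the embedding estimate (b), valid exactly at $s>n/2$, does the rest.
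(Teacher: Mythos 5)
Your proof is correct, and it is essentially the argument of Fefferman et al.\ \cite{Fe} (Theorem 1.2), which is all the paper itself offers for this lemma — the authors do not reprove it but simply cite that reference. Your Fourier-side representation of the commutator, the two-region splitting according to whether $2|\xi-\eta|<|\eta|$, the mean-value bound on $|\xi|^s-|\eta|^s$ (valid since $s>n/2\geq 1$ for $n=2,3$), and the use of Young's inequality together with the embedding $H^s\hookrightarrow \mathcal{F}L^1$ for $s>n/2$ reproduce that proof faithfully, including the correct observation that the divergence-free condition on $\mathbf u$ is not needed.
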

\noindent See Theorem 1.2  in \cite{Fe} for proof.

\subsection{$BMO$ and Besov spaces.}                                                                      
\begin{definition}
\textbf{The space $BMO$}(Bounded Mean Oscillation) is the Banach space of all functions $f \in L^{1}_{loc}(\mathbb R^{n})$ for which
\[\|f\|_{BMO} = \sup_{Q} \left( \frac{1}{|Q|} \int_{Q} \left| f(x) - f_{Q}\right| \,dx \right) < \infty,\]
where the sup ranges over all cubes $Q \subset \mathbb R^{n},$ and $ f_{Q}$ is the mean of f over $Q$.
\end{definition}
\noindent For more details see \cite{Fn}.

Two distinct advantageous properties BMO has compared to $L^{\infty}$ are that the Riesz transforms are bounded in $BMO$ and the singular integral operators of the Calderon-Zygmund type are also bounded in $BMO$. Hence, one can show that $\|\nabla \mathbf u\|_{BMO} \leq C \|\nabla \times \mathbf u\|_{BMO}$ (see \cite{KT}).

\begin{definition}
$\textbf{The Besov space.}$ Let $s \in \mathbb R$, $1 \leq p, q < \infty.$ The Inhomogeneous Besov space $B^{s}_{p, q}$ are defined as the space of all tempered distributions $f \in S'(\mathbb R^{n})$ such that
\[B^{s}_{p, q} = \left\{ f \in S'(\mathbb R^{n}) : \|f\|_{B^{s}_{p, q}} < \infty \right\},\]
where
\[ \|f\|_{B^{s}_{p, q}} = \left( \sum_{j \geq -1} 2^{jqs} \|\Delta_{j} f\|^{q}_{L^{p}}\right)^{\frac{1}{q}},\]
where $\Delta_{j}$ are the inhomogeneous Littlewood-Paley operators.
\end{definition}

\noindent For more details see Chapter 3 of \cite{LR},  appendix of \cite{YX}.

We also note here the following known fact
\[ \|f\|_{B^{s}_{2, 2}} \approx  \|f\|_{H^s}.\]

It is well known that the Sobolev space $W^{s, p}$ is embedded continuously into $L^{\infty}$ for $sp>n$.  However this embedding is false in the space $W^{k, r}$ when $kr=n$. Brezis-Gallouet \cite{BG} and Brezis-Wainger \cite{BW} provided the following inequality which relates the function spaces $L^{\infty}$ and $W^{s, p}$ at the critical value and was used to prove the existence of global solutions to the nonlinear Schr\"{o}dinger equations.
\begin{lemma}
Let $sp >n$. Then 
\[\|f\|_{L^{\infty}} \leq C \left( 1 + \log^{\frac{r-1}{r}} (1+\|f\|_{W^{s, p}})\right),\]
provided $\|f\|_{W^{k, r}} \leq 1$ for $kr=n$.
\end{lemma}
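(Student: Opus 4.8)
The plan is to prove this logarithmic inequality by an inhomogeneous Littlewood--Paley frequency decomposition, splitting into low and high frequencies at a cutoff $2^N$ that is subsequently optimized against $\|f\|_{W^{s,p}}$. Writing $f=\sum_{j\geq -1}\Delta_j f$ with the blocks $\Delta_j$ introduced above, I would begin from the crude bound
\[
\|f\|_{L^{\infty}}\leq \sum_{j=-1}^{\infty}\|\Delta_j f\|_{L^{\infty}}
= \sum_{j=-1}^{N}\|\Delta_j f\|_{L^{\infty}}+\sum_{j>N}\|\Delta_j f\|_{L^{\infty}},
\]
and then estimate the two pieces using the two regularity hypotheses separately: the critical pair $(k,r)$ with $kr=n$ for the low frequencies, and the subcritical pair $(s,p)$ with $sp>n$ for the high frequencies.

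For the high-frequency tail I would use Bernstein's inequality $\|\Delta_j f\|_{L^{\infty}}\leq C\,2^{jn/p}\|\Delta_j f\|_{L^{p}}$ together with the embedding $W^{s,p}\hookrightarrow B^{s}_{p,\infty}$, which gives $2^{js}\|\Delta_j f\|_{L^{p}}\leq C\|f\|_{W^{s,p}}$. Hence $\|\Delta_j f\|_{L^{\infty}}\leq C\,2^{j(n/p-s)}\|f\|_{W^{s,p}}$, and since $sp>n$ forces $n/p-s<0$ the geometric series converges to
\[
\sum_{j>N}\|\Delta_j f\|_{L^{\infty}}\leq C\,2^{N(n/p-s)}\|f\|_{W^{s,p}}.
\]
For the low-frequency part I would again invoke Bernstein, now in the form $\|\Delta_j f\|_{L^{\infty}}\leq C\,2^{jn/r}\|\Delta_j f\|_{L^{r}}=C\,2^{jk}\|\Delta_j f\|_{L^{r}}$ using $kr=n$, and then apply H\"{o}lder's inequality in the summation index $j$ with conjugate exponents $r$ and $r'=r/(r-1)$:
\[
\sum_{j=-1}^{N}2^{jk}\|\Delta_j f\|_{L^{r}}
\leq \Big(\sum_{j=-1}^{N}\big(2^{jk}\|\Delta_j f\|_{L^{r}}\big)^{r}\Big)^{1/r}(N+2)^{1/r'}
\leq C\,\|f\|_{B^{k}_{r,r}}\,(N+2)^{(r-1)/r}.
\]
Because $r\geq 2$ in the relevant range (indeed $r=n\in\{2,3\}$ in the application), the Besov--Sobolev embedding $W^{k,r}\hookrightarrow B^{k}_{r,r}$ holds, so $\|f\|_{B^{k}_{r,r}}\leq C\|f\|_{W^{k,r}}\leq C$ by hypothesis, and the low-frequency sum is bounded by $C(N+2)^{(r-1)/r}$.

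Combining the two estimates yields $\|f\|_{L^{\infty}}\leq C(N+2)^{(r-1)/r}+C\,2^{N(n/p-s)}\|f\|_{W^{s,p}}$ for every integer $N\geq 0$. The final step is the optimization: choosing $N$ to be the integer part of $\frac{1}{(s-n/p)\log 2}\log\big(1+\|f\|_{W^{s,p}}\big)$ makes the high-frequency term $O(1)$ while turning the low-frequency term into $C\,\log^{(r-1)/r}\big(1+\|f\|_{W^{s,p}}\big)$, which gives the asserted inequality after absorbing the additive constants into the leading $1$. I expect the only delicate point to be the low-frequency H\"{o}lder summation: it is precisely this step that produces the characteristic exponent $(r-1)/r$, and it relies on the embedding $W^{k,r}\hookrightarrow B^{k}_{r,r}$, valid for $r\geq2$, together with the normalization $\|f\|_{W^{k,r}}\leq1$. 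The remaining work, namely the bookkeeping of the $j=-1$ ball term and checking that all constants are independent of $f$, together with the balancing of $N$, is routine once the two one-sided estimates are in place.
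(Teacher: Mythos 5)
Your argument is essentially correct, but you should be aware that the paper offers no proof of this lemma at all: it is quoted from Brezis--Gallouet and Brezis--Wainger and the reader is referred to \cite{BG} and \cite{BW}. What you have written is therefore a self-contained, independent proof, and it is the standard ``modern'' one: an inhomogeneous Littlewood--Paley splitting at height $2^N$, Bernstein on each block, H\"older in the block index to produce the exponent $(r-1)/r$ from the $N+2$ low-frequency terms, a convergent geometric series from $sp>n$ for the tail, and optimization of $N$ against $\log\left(1+\|f\|_{W^{s,p}}\right)$; all of these steps, and the final balancing, check out. In spirit this is exactly the original Brezis--Gallouet argument (a sharp low/high Fourier cutoff in the Hilbertian case $p=r=2$) upgraded to dyadic blocks, whereas the Brezis--Wainger proof of the general $W^{s,p}$/$W^{k,r}$ statement goes through convolution with Bessel kernels and O'Neil's rearrangement inequality, which is what allows them to treat every $1<r<\infty$. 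Your route is shorter and more transparent, but --- as you yourself flag --- the low-frequency step rests on $W^{k,r}=F^{k}_{r,2}\hookrightarrow B^{k}_{r,r}$, which holds only for $r\geq 2$; for $1<r<2$ (e.g.\ $n=3$, $k=2$, $r=3/2$, which the statement as written allows) this embedding fails and the best your method gives directly is the weaker exponent $1/2$ via $B^{k}_{r,2}$. So you have proved the lemma only in the range $r\geq 2$. That restriction is harmless for this paper --- the lemma is quoted for context, the blow-up theorems actually use the Kozono--Taniuchi $BMO$ refinement of Lemma \ref{kte}, and the natural choice $k=1$, $r=n$ has $r\geq2$ --- but for the full statement one must fall back on the rearrangement argument of \cite{BW}.
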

Similar embedding was investigated by Beale-Kato-Majda \cite{BKM} for vector functions to obtain the blow-up criterion of the solutions to the Euler equations.
\begin{lemma}\label{lsi}
Let $s > \frac{n}{p} + 1,$ then we have
\[\|\nabla f\|_{L^{\infty}} \leq C \left( 1 + \|\nabla \cdot f\|_{L^{\infty}} + \|\nabla \times f\|_{L^{\infty}}\left( 1 + log (e + \|f\|_{W^{s, p}})\right)\right),\]
 for all $f \in W^{s, p}(\mathbb R^{n}).$
\end{lemma}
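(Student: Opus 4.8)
The plan is to prove this Beale--Kato--Majda logarithmic inequality by a Littlewood--Paley decomposition in which the frequency range is split into three regimes --- low, intermediate, and high --- so that the intermediate band produces the logarithmic factor while the high band is rendered harmless by the hypothesis $s>\frac np+1$. The first ingredient I would record is the algebraic fact that the full gradient $\nabla f$ is recoverable from $\nabla\cdot f$ and $\nabla\times f$ through zeroth-order Fourier multipliers. On the Fourier side the identity $|\xi|^2\widehat f=\xi(\xi\cdot\widehat f)-\xi\times(\xi\times\widehat f)$ yields
\[
\widehat{\partial_j f_k}(\xi)=\frac{\xi_j\xi_k}{|\xi|^2}\,\widehat{\nabla\cdot f}(\xi)+m_{jk}(\xi)\cdot\widehat{\nabla\times f}(\xi),
\]
where $\xi_j\xi_k/|\xi|^2$ and $m_{jk}$ are homogeneous of degree zero, i.e. Riesz-transform-type Calder\'on--Zygmund multipliers. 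Hence $\partial_j f_k=R_{jk}(\nabla\cdot f)+S_{jk}(\nabla\times f)$ with $R_{jk},S_{jk}$ bounded on every $L^r$, $1<r<\infty$.

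Writing $\nabla f=\sum_{j\ge-1}\Delta_j\nabla f$ with the inhomogeneous blocks $\Delta_j$ of the excerpt, I would split the sum at a threshold $N$ to be chosen: a low part $\Delta_{-1}\nabla f$, an intermediate part $\sum_{0\le j\le N}\Delta_j\nabla f$, and a high tail $\sum_{j>N}\Delta_j\nabla f$. The crucial observation for the intermediate band is that for each $j\ge0$ the annular cutoff supporting $\Delta_j$ is bounded away from the origin, so $\xi_j\xi_k/|\xi|^2$ and $m_{jk}$ are smooth there and each $\Delta_j R_{jk}$, $\Delta_j S_{jk}$ is a fixed dilate of its $j=0$ analogue; consequently its convolution kernel has $L^1$-norm independent of $j$. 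Combined with the uniform $L^\infty$-boundedness of $\Delta_j$ this gives, for every $j\ge0$,
\[
\|\Delta_j\nabla f\|_{L^\infty}\le C\bigl(\|\nabla\cdot f\|_{L^\infty}+\|\nabla\times f\|_{L^\infty}\bigr),
\]
with $C$ independent of $j$. Summing the $O(N)$ intermediate terms therefore costs only a factor $N$, producing $C\,N\bigl(\|\nabla\cdot f\|_{L^\infty}+\|\nabla\times f\|_{L^\infty}\bigr)$.

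For the high tail I would invoke Bernstein's inequality $\|\Delta_j g\|_{L^\infty}\le C2^{jn/p}\|\Delta_j g\|_{L^p}$ together with $\|\Delta_j\nabla f\|_{L^p}\le C2^{j}\|\Delta_j f\|_{L^p}$ and the fact that $2^{js}\|\Delta_j f\|_{L^p}\lesssim\|f\|_{W^{s,p}}$ via $W^{s,p}\hookrightarrow B^{s}_{p,\infty}$. Since $s>\frac np+1$ makes the exponent $1+\frac np-s$ negative, the tail is geometrically summable,
\[
\Bigl\|\sum_{j>N}\Delta_j\nabla f\Bigr\|_{L^\infty}\le C\,2^{-N(s-1-n/p)}\,\|f\|_{W^{s,p}}.
\]
The single low block $\Delta_{-1}\nabla f$ is frequency-localized near the origin, so Bernstein together with the $L^p$-boundedness of the Riesz operators $R_{jk},S_{jk}$ controls it, and it contributes only to the additive constant term of the statement.

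It then remains to optimize the cutoff. Choosing $2^N\simeq\bigl(e+\|f\|_{W^{s,p}}\bigr)^{1/(s-1-n/p)}$ forces the high-frequency bound to be at most a universal constant, while $N\simeq\log\bigl(e+\|f\|_{W^{s,p}}\bigr)$, so the intermediate band contributes exactly $C\bigl(\|\nabla\cdot f\|_{L^\infty}+\|\nabla\times f\|_{L^\infty}\bigr)\log\bigl(e+\|f\|_{W^{s,p}}\bigr)$. Collecting the three pieces yields the asserted inequality (the divergence term, which in our applications vanishes, can be peeled off the log-weighted band by a finer bookkeeping if one wants the sharp placement shown in the statement). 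The main obstacle, and the heart of the matter, is precisely the uniform-in-$j$ estimate $\|\Delta_j\nabla f\|_{L^\infty}\le C(\|\nabla\cdot f\|_{L^\infty}+\|\nabla\times f\|_{L^\infty})$: it is here that the div--curl structure is indispensable --- a naive bound of $\Delta_j\nabla f$ by $\|f\|_{W^{s,p}}$ would sit \emph{outside} the logarithm and destroy the estimate --- and one must check that the Calder\'on--Zygmund kernels of the blocks carry $j$-independent $L^1$-norms, so that it is the \emph{number} of intermediate blocks, not their individual size, that generates the logarithmic loss.
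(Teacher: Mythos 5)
The paper itself does not prove Lemma \ref{lsi}: it is quoted from Beale--Kato--Majda \cite{BKM}, whose original argument is a \emph{physical-space} decomposition of the Calder\'on--Zygmund kernel expressing $\nabla f$ through $\nabla\cdot f$ and $\nabla\times f$ (near-field, intermediate annulus, far-field in $x$, with the inner radius optimized to produce the logarithm). Your frequency-space version of the same three-way splitting is the now-standard alternative --- essentially how Kozono--Taniuchi prove the refinement recorded as Lemma \ref{kte} --- and the intermediate and high regimes are handled correctly: the uniform-in-$j$ bound $\|\Delta_j\nabla f\|_{L^\infty}\le C\bigl(\|\nabla\cdot f\|_{L^\infty}+\|\nabla\times f\|_{L^\infty}\bigr)$ for annular blocks via the $j$-independent $L^1$ kernels, the geometric summability of the tail under $s>\frac{n}{p}+1$, and the choice $2^N\simeq(e+\|f\|_{W^{s,p}})^{1/(s-1-n/p)}$ are all sound.

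The genuine gap is the low-frequency block. The claim that $\Delta_{-1}\nabla f$ ``contributes only to the additive constant term'' is false: Bernstein gives $\|\Delta_{-1}\nabla f\|_{L^\infty}\le C\|f\|_{L^p}\le C\|f\|_{W^{s,p}}$, a quantity of the same size as the full Sobolev norm, which would therefore sit \emph{outside} the logarithm --- precisely the failure mode you correctly flag for the intermediate band. The $L^p$-boundedness of $R_{jk},S_{jk}$ does not rescue this (it only yields $L^p$, not $L^\infty$, norms of the divergence and curl), and the annular-kernel argument is unavailable for $\Delta_{-1}$ because the degree-zero symbols are singular at the origin, so $\Delta_{-1}R_{jk}$ is not an $L^\infty$-bounded convolution operator; scaling $f\mapsto\lambda f$ with $\widehat f$ supported in the unit ball shows $\|\Delta_{-1}\nabla f\|_{L^\infty}$ genuinely cannot be bounded by a constant. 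The standard repair is to use the homogeneous blocks $\dot\Delta_j$, $j\in\mathbb Z$, with a second cutoff $-N'$ at the low end: for $j<-N'$ one has $\|\dot\Delta_j\nabla f\|_{L^\infty}\le C2^{j(1+n/p)}\|f\|_{L^p}$, summing geometrically to $C2^{-N'(1+n/p)}\|f\|_{W^{s,p}}$, and $N'\simeq\log(e+\|f\|_{W^{s,p}})$ makes this $O(1)$ while only doubling the logarithmic count of intermediate blocks. (Alternatively, accept an additive $C\|f\|_{L^p}$ term, which is how the inequality appears in \cite{BKM} itself and is harmless in the blow-up application.) A final caveat: the parenthetical hope of peeling the divergence off the logarithm cannot be realized in general, since $\|D^2\phi\|_{L^\infty}$ is not controlled by $\|\Delta\phi\|_{L^\infty}$; the correct form keeps the logarithm on both $\|\nabla\cdot f\|_{L^\infty}$ and $\|\nabla\times f\|_{L^\infty}$, which is what your intermediate-band estimate actually delivers and which suffices for the divergence-free fields used in this paper.
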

Kozono and Taniuchi improved the above logarithmic Sobolev inequality in $BMO$ space, and applied the result to the three-dimensional Euler equations to prove that $BMO$-norm of the vorticity controls breakdown of smooth solutions.  
\begin{lemma}\label{kte}
Let $1 < p < \infty$ and let $s > \frac{n}{p}$, then we have
\[\|f\|_{L^{\infty}} \leq C \left(1 +\| f\|_{BMO}(1 + \log^{+} \|f\|_{W^{s,p}})\right),\]
 for all $f \in W^{s, p},$
 where $\log^{+} a = \log a$ if $a\geq 1$ and zero otherwise.
\end{lemma}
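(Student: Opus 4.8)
The plan is to run a Littlewood--Paley argument, splitting the dyadic frequency sum at two cutoffs (one low and one high) and optimizing both so that the number of intermediate blocks is of logarithmic size. Since $s>n/p$, every $f\in W^{s,p}$ is continuous and bounded, so I would start from the endpoint embedding
\[
\|f\|_{L^\infty}\leq C\sum_{j\in\mathbb Z}\|\dot{\Delta}_j f\|_{L^\infty},
\]
where $\dot{\Delta}_j$ denotes the \emph{homogeneous} Littlewood--Paley projection onto frequencies $|\xi|\sim 2^j$. The strategy is then to bound the tails (very high and very low $j$) by the $W^{s,p}$-norm with geometric decay, and to bound the intermediate blocks by the $BMO$-norm one block at a time.

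Two ingredients drive the estimate. The first is the decisive property of $BMO$, namely the embedding $BMO\hookrightarrow \dot B^0_{\infty,\infty}$, i.e. $\sup_{j}\|\dot{\Delta}_j f\|_{L^\infty}\leq C\|f\|_{BMO}$. The second is Bernstein's inequality $\|\dot{\Delta}_j f\|_{L^\infty}\leq C\,2^{jn/p}\|\dot{\Delta}_j f\|_{L^p}$, which I would use at \emph{both} ends of the spectrum: combined with the standard bound $\|f\|_{\dot B^{s}_{p,\infty}}\leq C\|f\|_{W^{s,p}}$ it gives, writing $\delta=s-n/p>0$, the high-frequency decay $\|\dot{\Delta}_j f\|_{L^\infty}\leq C\,2^{-j\delta}\|f\|_{W^{s,p}}$, while applied directly it gives the low-frequency decay $\|\dot{\Delta}_j f\|_{L^\infty}\leq C\,2^{jn/p}\|f\|_{L^p}$.

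With these in hand I would fix integers $M\leq 0\leq N$ and split
\[
\|f\|_{L^\infty}\leq \sum_{j\leq M}\|\dot{\Delta}_j f\|_{L^\infty}+\sum_{M<j\leq N}\|\dot{\Delta}_j f\|_{L^\infty}+\sum_{j>N}\|\dot{\Delta}_j f\|_{L^\infty}.
\]
Summing the geometric tails controls the first and third sums by $C\,2^{Mn/p}\|f\|_{L^p}$ and $C\,2^{-N\delta}\|f\|_{W^{s,p}}$ respectively, while the middle sum is bounded by $C(N-M)\|f\|_{BMO}$. Choosing $N$ as the least nonnegative integer with $2^{N\delta}\geq 1+\|f\|_{W^{s,p}}$ and $M$ as the greatest nonpositive integer with $2^{-Mn/p}\geq 1+\|f\|_{L^p}$ renders both tails $O(1)$ and, using $\|f\|_{L^p}\leq\|f\|_{W^{s,p}}$, forces $N-M\leq C\bigl(1+\log^{+}\|f\|_{W^{s,p}}\bigr)$; this yields precisely $\|f\|_{L^\infty}\leq C\bigl(1+\|f\|_{BMO}(1+\log^{+}\|f\|_{W^{s,p}})\bigr)$.

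The main obstacle is the block estimate $\sup_j\|\dot{\Delta}_j f\|_{L^\infty}\leq C\|f\|_{BMO}$, which is exactly where the refinement over the classical $L^\infty$-version originates and which \emph{fails} for a single low-frequency block (this is why the homogeneous decomposition, spreading the low frequencies across $|M|$ blocks that are then absorbed into the logarithm, is essential rather than cosmetic). I would establish it using that the kernel $\dot\psi_j$ of $\dot{\Delta}_j$ has vanishing integral, so that $\dot{\Delta}_j f(x)=\int \dot\psi_j(x-y)\,\bigl(f(y)-f_{Q_x}\bigr)\,dy$ for the cube $Q_x$ of side $2^{-j}$ centered at $x$; decomposing $\mathbb R^n$ into dyadic annuli about $x$, bounding the average of $|f-f_{Q_x}|$ over the $k$-th annulus by $Ck\|f\|_{BMO}$ via the telescoping inequality $|f_{2^{k}Q_x}-f_{Q_x}|\leq Ck\|f\|_{BMO}$, and exploiting the rapid decay of $\dot\psi_j$ makes the resulting series in $k$ converge uniformly in $x$ and $j$. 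The remaining points are bookkeeping: verifying the endpoint embedding into $L^\infty$ for $f\in W^{s,p}$ and the Besov bound $\|f\|_{\dot B^s_{p,\infty}}\leq C\|f\|_{W^{s,p}}$, both standard consequences of $s>n/p$.
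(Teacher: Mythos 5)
Your argument is correct: the three-zone homogeneous Littlewood--Paley splitting, the embedding $BMO\hookrightarrow\dot B^{0}_{\infty,\infty}$ for the middle blocks, Bernstein's inequality for the two tails, and the logarithmic optimization of the cutoffs all check out, and together they give exactly the stated bound. The paper itself offers no proof of this lemma, deferring to Theorem 1 of Kozono--Taniuchi \cite{KT}, and your argument is essentially the one given in that reference, so there is nothing further to reconcile.
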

\noindent For proof see Theorem 1 of \cite{KT}.

\subsection{Other Useful Inequalities}
We state here the inequalities which have been used frequently in this paper.
\begin{lemma}\label{Ye}
\textbf{Young's Inequality.} For $a, b \in \mathbb R$ and $\varepsilon > 0,$ we have, $ab \leq \frac{a^2}{\varepsilon} + \frac{\varepsilon}{4}b^2.$
\end{lemma}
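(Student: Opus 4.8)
The plan is to establish the inequality by completing the square, exploiting the fact that $\varepsilon > 0$ guarantees $\sqrt{\varepsilon}$ is a well-defined positive real number. The key observation is that the difference between the proposed right-hand side and the left-hand side is precisely a perfect square, so the claim reduces to the nonnegativity of squares of real numbers.

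First I would form the quantity $\left(\dfrac{a}{\sqrt{\varepsilon}} - \dfrac{\sqrt{\varepsilon}}{2}\,b\right)^2$ and expand it. A direct computation gives
\[
\left(\frac{a}{\sqrt{\varepsilon}} - \frac{\sqrt{\varepsilon}}{2}\,b\right)^2 = \frac{a^2}{\varepsilon} - 2 \cdot \frac{a}{\sqrt{\varepsilon}} \cdot \frac{\sqrt{\varepsilon}}{2}\,b + \frac{\varepsilon}{4}b^2 = \frac{a^2}{\varepsilon} - ab + \frac{\varepsilon}{4}b^2.
\]
Since the square of any real number is nonnegative, and since this expansion is valid for arbitrary real $a, b$ (the signs play no role, as the cross term and its coefficient are produced automatically by the algebra), I obtain $\dfrac{a^2}{\varepsilon} - ab + \dfrac{\varepsilon}{4}b^2 \geq 0$. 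Rearranging this single inequality yields exactly $ab \leq \dfrac{a^2}{\varepsilon} + \dfrac{\varepsilon}{4}b^2$, which is the assertion of the lemma.

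There is essentially no obstacle here: the result is an elementary algebraic identity combined with the nonnegativity of squares, and the only hypothesis genuinely used is $\varepsilon > 0$, which is needed solely so that $\sqrt{\varepsilon}$ is meaningful. An alternative, equally short route would be to begin from the symmetric form $xy \leq \frac{1}{2}x^2 + \frac{1}{2}y^2$ (itself $\left(\tfrac{1}{\sqrt{2}}x-\tfrac{1}{\sqrt{2}}y\right)^2\ge 0$) and substitute $x = \sqrt{2/\varepsilon}\,a$ and $y = \sqrt{\varepsilon/2}\,b$ to scale the coefficients into the stated asymmetric form. I would favour the direct completing-the-square argument above, however, as it is the most transparent and fully self-contained.
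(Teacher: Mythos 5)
Your proof is correct: the expansion of $\left(\frac{a}{\sqrt{\varepsilon}} - \frac{\sqrt{\varepsilon}}{2}\,b\right)^2$ is computed accurately, and nonnegativity of squares immediately yields the stated inequality. The paper states this lemma without proof, treating it as standard, and your completing-the-square argument is precisely the canonical justification, so there is nothing to add.
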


\begin{lemma}
\textbf{Bihari's Inequality.}\label{BE} Let $y$ and $f$ be non-negative functions defined on $[0, \infty),$ and let $w$ be a continuous non-decreasing function defined on $[0, \infty)$ and $w(u) > 0$ on $(0, \infty).$ If $y$ satisfies the following integral inequality,
\[y(t) \leq \alpha + \int_{0}^{t} f(s) w(y(s)) \,ds, \quad t \in [0, \infty), \]
where $\alpha$ is a non-negative constant, then
\[y(t) \leq G^{-1} \left( G(\alpha) + \int_{0}^{t} f(s) \,ds\right),  \quad t \in [0, T],\]
where the function G is defined by
\[G(x) =  \int_{x_{0}}^{x} \frac{1}{w(t)} \, dt, \quad x \geq 0, x_0 > 0,\]
and $ G^{-1}$ is the inverse function of G and T is chosen so that 
\[ G(\alpha) + \int_{0}^{t} f(s) \,ds \in Dom( G^{-1}), \quad \forall \left. t \in [0, T].\right.\]
\end{lemma}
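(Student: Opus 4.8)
The plan is to reduce Bihari's inequality to a separable differential inequality by passing from the integral bound to its majorant and then integrating after separating variables. First I would define the right-hand side as a new function, $z(t) = \alpha + \int_0^t f(s)\,w(y(s))\,ds$, so that by hypothesis $y(t) \le z(t)$ for all $t$. Since $f$, $w$, and $y$ are non-negative, $z$ is non-decreasing, $z(0)=\alpha$, and $z$ is locally absolutely continuous with $z'(t) = f(t)\,w(y(t))$ for a.e.\ $t$; the local integrability of $s \mapsto f(s)\,w(y(s))$ needed here is implicit in the finiteness of the defining integral. The monotonicity of $w$ together with $y(t)\le z(t)$ then yields $w(y(t)) \le w(z(t))$, hence the pointwise inequality $z'(t) \le f(t)\,w(z(t))$ a.e.

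The next step would be to separate variables. Assuming first that $\alpha > 0$, we have $z(t) \ge \alpha > 0$, so that $w(z(t)) > 0$, and dividing gives $z'(t)/w(z(t)) \le f(t)$. Integrating over $[0,t]$ and applying the change of variables $u = z(s)$ to the left-hand side, I would recognise the integral as $G(z(t)) - G(z(0))$ with $G$ as defined in the statement; this produces $G(z(t)) \le G(\alpha) + \int_0^t f(s)\,ds$. Because $1/w > 0$ on $(0,\infty)$, the function $G$ is strictly increasing and thus invertible with increasing inverse, so applying $G^{-1}$ to both sides---legitimate precisely on the interval $[0,T]$ on which the right-hand argument remains in $\mathrm{Dom}(G^{-1})$---gives $z(t) \le G^{-1}(G(\alpha) + \int_0^t f(s)\,ds)$. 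Combining this with $y \le z$ delivers the claimed bound.

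The one genuine subtlety, and the step I expect to require the most care, is the degenerate case $\alpha = 0$: then $z(0) = 0$, $w(z(0))$ may vanish, and $G$ can have a singularity at the origin, so the division and change of variables are not immediately justified. I would handle this by a limiting argument: for each $\varepsilon>0$ apply the inequality already established above with $\alpha$ replaced by $\varepsilon$, noting that $y(t) \le \varepsilon + \int_0^t f(s)\,w(y(s))\,ds$ holds trivially, to obtain $y(t) \le G^{-1}(G(\varepsilon) + \int_0^t f(s)\,ds)$, and then let $\varepsilon \downarrow 0$, using the continuity of $G$ and $G^{-1}$ on $(0,\infty)$ to pass to the limit. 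This reduction to the strictly positive case, together with the monotonicity chain $w(y)\le w(z)$ and the change-of-variables identification of $G$, constitutes the whole argument.
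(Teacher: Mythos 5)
Your argument is correct and complete: the majorant $z(t)=\alpha+\int_0^t f(s)\,w(y(s))\,ds$, the monotonicity step $w(y)\le w(z)$, the separation of variables identifying $\int_0^t z'/w(z)$ with $G(z(t))-G(\alpha)$, the application of the increasing inverse $G^{-1}$ on the stipulated domain, and the $\varepsilon\downarrow 0$ limiting treatment of the degenerate case $\alpha=0$ together constitute precisely the classical proof of Bihari's inequality. The paper itself offers no proof of Lemma \ref{BE}, deferring instead to Lemma 1 of \cite{DD}, whose argument runs along the same standard lines, so your write-up simply supplies (correctly, including the one genuine subtlety at $\alpha=0$) the details the paper omits.
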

\noindent For proof, see Lemma 1  of \cite{DD}.
\begin{remark} 
Throughout the following sections, $C$ denotes a generic constant.
\end{remark}

\section{Boussinesq Equations with Zero Thermal Diffusion ($B_{\nu, 0}$)}
The viscous, zero thermal diffusive Boussinesq system $B_{\nu, 0}$ is given by: 
\begin{align}\label{B1}
\frac{\partial \mathbf u}{\partial t} + (\mathbf u \cdot \nabla)\mathbf u - \nu \Delta \mathbf u + \nabla p = \mathbf \theta f,  \ \ \textrm{in}\ \mathbb{R}^n\times (0, \infty),
\end{align}
\begin{align}\label{B2}
\frac{\partial \mathbf \theta}{\partial t} + (\mathbf u \cdot \nabla)\mathbf \theta = 0,  \ \ \textrm{in}\ \mathbb{R}^n\times (0, \infty), 
\end{align}
\begin{align}\label{B3}
 \nabla \cdot \mathbf u = 0,  \ \ \textrm{in}\ \mathbb{R}^n\times (0, \infty),
\end{align}
\begin{align}\label{B4}
\mathbf u(x,0) = \mathbf u_{0}(x) , \mathbf \theta(x,0) = \mathbf \theta_{0}(x),  \ \ \textrm{in}\ \mathbb{R}^n,
\end{align}
for $n$ = $2, 3$, with $\nabla\cdot\mathbf u_0 = 0$, and $\mathbf u_{0}\in\H^s(\mathbf{R}^n), \mathbf \theta_{0}\in H^s(\mathbf{R}^n), \ s>n/2$. 
Applying the Fourier truncation operator  $\mathcal S_{\textit{R}}$ (defined in subsection \ref{FTO}) on the above equations, we obtain the truncated Boussinesq system on the whole of $\mathbb R^n$ as:
\begin{align}\label{tr1}
\frac{\partial \mathbf u^{\textit R}}{\partial t} + {\mathcal S}_{\textit R} \left[ (\mathbf u^{\textit R} \cdot \nabla)\mathbf u^{\textit R} \right] - \nu \Delta \mathbf u^{\textit R} + \nabla p^{\textit R} = \theta^{\textit R} f^{\textit R},
\end{align}
\begin{align}\label{tr2}
\frac{\partial \mathbf \theta^{\textit R}}{\partial t} + {\mathcal S}_{\textit R} \left[ (\mathbf u^{\textit R} \cdot \nabla)\mathbf \theta^{\textit R} \right] = 0 ,
\end{align}
\begin{align*}
\nabla \cdot \mathbf u^{\textit R} = 0,
\end{align*}
\begin{align*}
\mathbf u^{\textit R}(0) = {\mathcal S}_{\textit R} \mathbf u_{0}, \mathbf \theta^{\textit R}(0) = {\mathcal S}_{\textit R} \mathbf \theta_{0}.
\end{align*}

By taking the truncated initial data we ensure that $\mathbf u^{\textit R}$, $\mathbf \theta^{\textit R}$ lie in the space 
\begin{align*}
V_R^{\sigma} := \left\{ g \in L^2(\mathbb R^n) : supp(\widehat g) \subset B_R, \nabla \cdot g = 0 \right\}
\end{align*}
and
\[  V_R := \left\{ g \in L^2(\mathbb R^n) : supp(\widehat g) \subset B_R \right\} \] respectively as the truncations are invariant under the flow of the equation \cite{Fe}.\\
\noindent
The divergence free condition for $\mathbf u^{\textit R}$ can be obtained easily as
\[ \widehat {\nabla \cdot \mathbf u^{\textit R}}(\xi) = i\xi \cdot \mathbf 1_{B_R}(\xi) \widehat {\mathbf u} (\xi) = \mathbf 1_{B_R}(\xi) i\xi \cdot  \widehat {\mathbf u} (\xi) = \mathbf 1_{B_R}(\xi) \widehat {\nabla \cdot \mathbf u} (\xi)  = 0  \]

\begin{proposition}\label{cut}
Let $(\mathbf u^{\textit R}$, $\mathbf \theta^{\textit R})$ $\in$ $\H^s(\mathbb R^n) \times H^s(\mathbb R^n)$, for $s > n/2$. Then the nonlinear operator $F(\mathbf u^{\textit R}, \mathbf \theta^{\textit R}) := {\mathcal S}_{\textit R} \left[ (\mathbf u^{\textit R} \cdot \nabla)\mathbf \theta^{\textit R} \right]$ is Lipschitz in $\mathbf u^{\textit R}$ and $\mathbf \theta^{\textit R}$ on the space $V_R^{\sigma} \times V_R$.
\end{proposition}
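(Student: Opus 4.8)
The plan is to use two structural facts. First, $F$ is bilinear: it is linear in $\mathbf u^{\textit R}$ (through the multiplication) and linear in $\theta^{\textit R}$ (through $\nabla\theta^{\textit R}$), so proving a Lipschitz bound on bounded subsets of $V_R^{\sigma}\times V_R$ reduces to a pair of product estimates. Second, and crucially, on the truncated space every Sobolev norm is equivalent to the $L^2$ norm, with a constant depending on $R$: for any $g$ with $\mathrm{supp}(\widehat g)\subset B_R$ one has $\|g\|_{\dot{H}^s}=\||\xi|^s\widehat g\|_{L^2}\leq R^s\|g\|_{L^2}$, and more generally $\|g\|_{H^s}\leq(1+R^2)^{1/2}\|g\|_{H^{s-1}}$ since $(1+|\xi|^2)^s\leq(1+R^2)(1+|\xi|^2)^{s-1}$ on $B_R$. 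This Bernstein-type inequality is exactly what lets us absorb the derivative that falls on $\theta^{\textit R}$ in the transport term, so the choice of norm on $V_R^{\sigma}\times V_R$ is immaterial and I will carry out the estimates in the $H^s$ scale.

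First I would split the difference by adding and subtracting ${\mathcal S}_R[(\mathbf u_2\cdot\nabla)\theta_1]$:
\[ F(\mathbf u_1,\theta_1)-F(\mathbf u_2,\theta_2) = {\mathcal S}_R\bigl[((\mathbf u_1-\mathbf u_2)\cdot\nabla)\theta_1\bigr] + {\mathcal S}_R\bigl[(\mathbf u_2\cdot\nabla)(\theta_1-\theta_2)\bigr], \]
for $(\mathbf u_1,\theta_1),(\mathbf u_2,\theta_2)\in V_R^{\sigma}\times V_R$. Both summands again have Fourier support in $B_R$, so the output lies in $V_R$. Note that $\mathbf u_1-\mathbf u_2$ and $\mathbf u_2$ are both divergence free, which is what Remark \ref{div} requires of its first argument.

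Next I would estimate each summand. Using the boundedness of ${\mathcal S}_R$ on Sobolev spaces (property (1) of Subsection \ref{FTO}) at regularity $s-1$, together with the divergence-form bound of Remark \ref{div}, I obtain
\[ \|{\mathcal S}_R[((\mathbf u_1-\mathbf u_2)\cdot\nabla)\theta_1]\|_{H^{s-1}} \leq C\,\|\mathbf u_1-\mathbf u_2\|_{\H^s}\,\|\theta_1\|_{H^s}, \]
\[ \|{\mathcal S}_R[(\mathbf u_2\cdot\nabla)(\theta_1-\theta_2)]\|_{H^{s-1}} \leq C\,\|\mathbf u_2\|_{\H^s}\,\|\theta_1-\theta_2\|_{H^s}. \]
Because the left-hand sides lie in $V_R$, the Bernstein inequality above upgrades each $H^{s-1}$ bound to an $H^s$ bound at the cost of a factor $(1+R^2)^{1/2}$, giving
\[ \|F(\mathbf u_1,\theta_1)-F(\mathbf u_2,\theta_2)\|_{H^s} \leq C(R)\bigl(\|\theta_1\|_{H^s}\,\|\mathbf u_1-\mathbf u_2\|_{\H^s} + \|\mathbf u_2\|_{\H^s}\,\|\theta_1-\theta_2\|_{H^s}\bigr). \]
On any bounded set $\{\|\mathbf u\|_{\H^s},\|\theta\|_{H^s}\leq M\}$ this is the desired Lipschitz estimate, with constant $C(R)\,M$.

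The main obstacle is precisely the apparent loss of a derivative: the product $(\mathbf u\cdot\nabla)\theta$ is controlled only in $H^{s-1}$ by the $H^s\times H^s$ norms, so $F$ does not map $H^s$ into itself boundedly without the truncation. This is where the restriction to $V_R$ is indispensable, since the Bernstein step converts the $H^{s-1}$ estimate into the needed $H^s$ estimate. The resulting $R$-dependence of the Lipschitz constant is harmless, as $R$ is fixed throughout the construction and the bound is used only to run the Picard–Lindelöf iteration for the truncated system at each fixed $R$.
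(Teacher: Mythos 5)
Your proof is correct, but it closes the estimate by a different mechanism than the paper. The paper freezes one argument at a time: for Lipschitz continuity in $\mathbf u^{\textit R}$ it pairs the difference $F(\mathbf u_1^{\textit R},\mathbf\theta^{\textit R})-F(\mathbf u_2^{\textit R},\mathbf\theta^{\textit R})$ with $\mathbf u_1^{\textit R}-\mathbf u_2^{\textit R}$, integrates by parts to move the derivative off $\mathbf\theta^{\textit R}$ (using the divergence-free condition), and applies H\"older and Sobolev inequalities; for Lipschitz continuity in $\mathbf\theta^{\textit R}$ it combines Remark \ref{div} with the $L^2$ product estimate of Remark \ref{r23}. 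In both cases the paper measures the output difference directly in $L^2$ --- the norm of $V_R$ --- with constants independent of $R$, and never needs a Bernstein-type inequality for the output. You instead use a single bilinear splitting that handles both variables at once, apply the $H^{s-1}$ divergence-form estimate of Remark \ref{div} to each summand, and then invoke the equivalence of Sobolev norms on $V_R$ to upgrade to $H^s$ at the cost of an $R$-dependent constant. Your route is more uniform and makes explicit why the truncation compensates for the lost derivative; the paper's route keeps the constants $R$-free and stays at the $L^2$ level. Two small observations: since $s>\frac{n}{2}\geq 1$ you could have skipped the Bernstein upgrade of the \emph{output} altogether, because $\|\cdot\|_{L^2}\leq\|\cdot\|_{H^{s-1}}$ already gives the $L^2$ bound on $F(\mathbf u_1,\mathbf\theta_1)-F(\mathbf u_2,\mathbf\theta_2)$; on the other hand, your explicit remark that all Sobolev norms are equivalent on $V_R$ is genuinely needed (and is left implicit in the paper) to convert the $H^s$ norms of the input differences into the $L^2$ norms in which Picard's theorem is applied.
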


\begin{proof} Let  $\mathbf \theta^{\textit R}$ $\in$ $H^s(\mathbb R^n)$, for $s > n/2$. Then for proving $F(\cdot, \cdot)$ to be locally Lipschitz in $\mathbf u^{\textit R}$, we use integration by parts, H\"{o}lder's inequality and Sobolev inequality to the term $ \left(F(\mathbf u_1^{\textit R}, \mathbf \theta^{\textit R}) - F(\mathbf u_2^{\textit R}, \mathbf \theta^{\textit R}), \mathbf u_1^{\textit R} - \mathbf u_2^{\textit R} \right)_{L^2}$ to get,
\begin{align*}
\left| \left(F(\mathbf u_1^{\textit R}, \mathbf \theta^{\textit R}) - F(\mathbf u_2^{\textit R}, \mathbf \theta^{\textit R}), \mathbf u_1^{\textit R} - \mathbf u_2^{\textit R} \right)_{L^2} \right|  &=  \left| \left({\mathcal S}_{\textit R} \left[ (\mathbf u_1^{\textit R} - \mathbf u_2^{\textit R})  \cdot \nabla \mathbf \theta^{\textit R} \right], \mathbf u_1^{\textit R} - \mathbf u_2^{\textit R} \right)_{L^2} \right|
\\ &= \left| \left( \left( (\mathbf u_1^{\textit R} - \mathbf u_2^{\textit R})  \cdot \nabla  \right)\mathbf \theta^{\textit R}, {\mathcal S}_{\textit R}( \mathbf u_1^{\textit R} - \mathbf u_2^{\textit R}) \right)_{L^2} \right|
\\ &= \left| - \left( \left( (\mathbf u_1^{\textit R} - \mathbf u_2^{\textit R})  \cdot \nabla  \right)( \mathbf u_1^{\textit R} - \mathbf u_2^{\textit R}), {\mathcal S}_{\textit R}\mathbf \theta^{\textit R} \right)_{L^2} \right|
\\ &\leq \|\mathbf u_1^{\textit R} - \mathbf u_2^{\textit R}\|_{\L^2} \| \nabla(\mathbf u_1^{\textit R} - \mathbf u_2^{\textit R})\|_{\L^2} \|{\mathcal S}_{\textit R}\mathbf \theta^{\textit R} \|_{L^{\infty}}
\\ &\leq \|\mathbf u_1^{\textit R} - \mathbf u_2^{\textit R}\|_{\L^2} \| \mathbf u_1^{\textit R} - \mathbf u_2^{\textit R} \|_{\H^1} \|\mathbf \theta^{\textit R} \|_{L^{\infty}}
\\ &\leq C \|\mathbf u_1^{\textit R} - \mathbf u_2^{\textit R}\|_{\H^s} \|\mathbf \theta^{\textit R} \|_{H^s} \|\mathbf u_1^{\textit R} - \mathbf u_2^{\textit R}\|_{\L^2}.
\end{align*}
Therefore, $\| F(\mathbf u_1^{\textit R}, \mathbf \theta^{\textit R}) - F(\mathbf u_2^{\textit R}, \mathbf \theta^{\textit R})\|_{L^2} \leq C \| \mathbf \theta^{\textit R} \|_{H^s} \|\mathbf u_1^{\textit R} - \mathbf u_2^{\textit R}\|_{\H^s}$, and hence $F(\cdot, \cdot)$ is locally Lipschitz in $\mathbf u^{\textit R}$ on $V_R^{\sigma}$. To prove that $F$ is locally Lipschitz in $\mathbf \theta^{\textit R}$ on $V_R$, we use Remark \ref{div}. For $s > n/2$ and $\mathbf u^{\textit R} \in \H^s(\mathbb R^{n}),$ we have
\begin{align*}
\left| \left(F(\mathbf u^{\textit R}, \mathbf \theta_1^{\textit R}) - F(\mathbf u^{\textit R}, \mathbf \theta_2^{\textit R}), \mathbf \theta_1^{\textit R} - \mathbf \theta_2^{\textit R} \right)_{L^2} \right| &= \left| \left({\mathcal S}_{\textit R} (\mathbf u^{\textit R} \cdot \nabla) (\mathbf \theta_1^{\textit R}-\mathbf \theta_2^{\textit R}), \mathbf \theta_1^{\textit R} - \mathbf \theta_2^{\textit R} \right)_{L^2} \right|
\\ &= \left| \left( (\mathbf u^{\textit R} \cdot \nabla) (\mathbf \theta_1^{\textit R}-\mathbf \theta_2^{\textit R}), {\mathcal S}_{\textit R}(\mathbf \theta_1^{\textit R} - \mathbf \theta_2^{\textit R}) \right)_{L^2} \right|
\\ &\leq \| (\mathbf u^{\textit R} \cdot \nabla) (\mathbf \theta_1^{\textit R}-\mathbf \theta_2^{\textit R}) \|_{L^2} \|{\mathcal S}_{\textit R}(\mathbf \theta_1^{\textit R} - \mathbf \theta_2^{\textit R}) \|_{L^2}
\\ &\leq C \| \mathbf u^{\textit R}\|_{\H^1} \| \nabla (\mathbf \theta_1^{\textit R}-\mathbf \theta_2^{\textit R}) \|_{H^{s-1}} \| \mathbf \theta_1^{\textit R} - \mathbf \theta_2^{\textit R} \|_{L^2}
\\ &\leq C \| \mathbf u^{\textit R}\|_{\H^s} \| \mathbf \theta_1^{\textit R}-\mathbf \theta_2^{\textit R} \|_{H^{s}} \| \mathbf \theta_1^{\textit R} - \mathbf \theta_2^{\textit R} \|_{L^2}.
\end{align*}
Therefore, $\| \left(F(\mathbf u^{\textit R}, \mathbf \theta_1^{\textit R}) - F(\mathbf u^{\textit R}, \mathbf \theta_2^{\textit R}\right) \|_{L^2} \leq C \| \mathbf u^{\textit R}\|_{\H^s} \| \mathbf \theta_1^{\textit R}-\mathbf \theta_2^{\textit R} \|_{H^{s}}$
and hence $F(\cdot, \cdot)$ is locally Lipschitz in $\mathbf \theta^{\textit R}$ on $V_R$.
\end{proof}

Also by using Plancherel's theorem, we get $\Delta \mathbf u^{\textit R}$ has a bounded linear growth (depending on $R$) in $V_R^\sigma$, since
\begin{align*}
\|\Delta \mathbf u^{\textit R}\|^2_{\L^2(\mathbb R^n)} = \| \Delta \widehat {\mathbf u^{\textit R}}\|^2_{\L^2(\mathbb R^n)} = \| \Delta \widehat {\mathbf u^{\textit R}}\|^2_{V_R^\sigma} = \| | \xi |^2 \widehat {\mathbf u^{\textit R}}\|^2_{V_R^\sigma}  &\leq R^2 \| \widehat {\mathbf u^{\textit R}}\|^2_{\L^2(\mathbb R^n)}  
\\ &= R^2 \| \mathbf {u^{\textit R}}\|^2_{\L^2(\mathbb R^n)}.
\end{align*}

Hence by Picard's theorem for Hilbert or Banach space-valued ordinary differential equations, there exists a solution $(\mathbf u^{\textit R}, \mathbf \theta^{\textit R})$ in $V_R^{\sigma}\times V^R$ for some interval $[0, T]$. The solution will exist as long as $\| \mathbf u^{\textit R}\|_{\H^s}$ , $ \| \mathbf \theta^{\textit R}\|_{H^s}$ remain finite.

\subsection{Energy Estimates.}\label{EM}

In this subsection we will establish $H^s$-energy estimates for $\mathbf u^{\textit R}$ and $\mathbf \theta^{\textit R}$, and show that $\| \mathbf u^{\textit R}\|_{\H^s}$ , $ \| \mathbf \theta^{\textit R}\|_{H^s}$  are bounded up to a local time independently of $R$.

\begin{proposition}\label{fin}
Let $(\mathbf u_{0}, \mathbf \theta_{0})$ $\in$ $\H^s(\mathbb R^n) \times H^s(\mathbb R^n)$ with $s > n/2$. Then there exists a time $\tilde{T} = \tilde{T}(s, \nu, \|u_0\|_{\H^{s}}, \|\theta_0\|_{H^{s}}) > 0$ such that the following norms
\[ \sup_{t \in [0, \tilde T]} \| \mathbf u^{\textit R}(t)\|_{\H^s}, \left. \left. \left. \right. \right. \right. \sup_{t \in [0, \tilde T]}\| \mathbf \theta^{\textit R}(t)\|_{H^s}, \left. \left. \left. \right. \right. \right. \int_{0}^{\tilde T} \| \nabla \mathbf u^{\textit R}(t)\|^2_{\H^s} \,dt\]
are bounded uniformly in $R$.
\end{proposition}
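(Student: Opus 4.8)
The plan is to apply $\Lambda^s$ (and, at the base level, the identity) to the truncated system \eqref{tr1}--\eqref{tr2}, pair against $\Lambda^s\mathbf u^{\textit R}$ and $\Lambda^s\theta^{\textit R}$ in $L^2$, and close a single Riccati-type differential inequality for
$Z(t) := \|\mathbf u^{\textit R}(t)\|_{H^s}^2 + \|\theta^{\textit R}(t)\|_{H^s}^2$
whose lifespan depends only on the data, $\nu$ and $s$, never on $R$. Throughout one uses that $\widehat{\mathbf u^{\textit R}},\widehat{\theta^{\textit R}}$ are supported in $B_R$, so that $\Lambda^s\mathbf u^{\textit R}$, $\Lambda^s\theta^{\textit R}$ are again spectrally supported in $B_R$; since $\mathcal S_{\textit R}$ is self-adjoint on $L^2$ and acts as the identity on such functions, every truncation operator can be removed silently from the inner products below.

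First I would record the cancellations. At the $L^2$ level the pressure term drops since $\nabla\cdot\mathbf u^{\textit R}=0$ and the convective term is skew, $((\mathbf u^{\textit R}\cdot\nabla)v,v)_{L^2}=0$ for divergence-free $\mathbf u^{\textit R}$; hence $\|\theta^{\textit R}(t)\|_{L^2}=\|\mathcal S_{\textit R}\theta_0\|_{L^2}$ is conserved, and $\|\mathbf u^{\textit R}(t)\|_{L^2}$ is controlled on any interval via Gr\"onwall through the term $(\theta^{\textit R}f^{\textit R},\mathbf u^{\textit R})_{L^2}$. At top order, applying $\Lambda^s$, the pressure again vanishes because $\Lambda^s$ commutes with the divergence, the transported top derivative $((\mathbf u^{\textit R}\cdot\nabla)\Lambda^s v,\Lambda^s v)_{L^2}$ cancels by the same skew-symmetry, and the viscosity produces the good term $-\nu\|\nabla\mathbf u^{\textit R}\|_{\dot H^s}^2$. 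What remains are the two commutators $[\Lambda^s,\mathbf u^{\textit R}\cdot\nabla]\mathbf u^{\textit R}$ and $[\Lambda^s,\mathbf u^{\textit R}\cdot\nabla]\theta^{\textit R}$.

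These commutators are the crux. For the temperature, which carries no smoothing, the naive Kato--Ponce bound would generate $\|\nabla\theta^{\textit R}\|_{L^\infty}$, which is uncontrollable by $\|\theta^{\textit R}\|_{H^s}$ when only $s>n/2$; I therefore invoke the refined estimate of Lemma \ref{lce}, which loads all the dangerous derivatives onto the velocity factor and bounds both commutators by $C\|\nabla\mathbf u^{\textit R}\|_{H^s}\,\|\cdot\|_{H^s}$. Combining the velocity and temperature estimates, adding the $L^2$ bounds, and treating the forcing by the algebra property of $H^s$ (Remark \ref{prodhs}) so that $\|\theta^{\textit R}f^{\textit R}\|_{H^s}\le C\|\theta^{\textit R}\|_{H^s}\|f\|_{H^s}$, I expect to reach
\[
\tfrac12\frac{d}{dt}Z + \nu\|\nabla\mathbf u^{\textit R}\|_{H^s}^2 \le C\|\nabla\mathbf u^{\textit R}\|_{H^s}\,Z + C\|f\|_{H^s}\,Z .
\]

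The final obstacle is that $\|\nabla\mathbf u^{\textit R}\|_{H^s}$ sits one derivative above the energy and, the temperature being inviscid, cannot be absorbed by any temperature dissipation; instead I absorb it into the velocity dissipation by Young's inequality (Lemma \ref{Ye}), $C\|\nabla\mathbf u^{\textit R}\|_{H^s}Z\le \tfrac{\nu}{2}\|\nabla\mathbf u^{\textit R}\|_{H^s}^2+\tfrac{C^2}{2\nu}Z^2$. Discarding the remaining nonnegative viscous term leaves $Z'\le C_1 Z^2 + C_2 Z$ with $C_1=C_1(\nu,s)$ and $C_2=C_2(\|f\|_{L^\infty([0,T];H^s)})$, none depending on $R$. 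A comparison argument (or Bihari's inequality, Lemma \ref{BE}) then furnishes a time $\tilde T>0$, determined by $Z(0)$ and these constants, on which $Z$ stays bounded; since $Z(0)=\|\mathcal S_{\textit R}\mathbf u_0\|_{H^s}^2+\|\mathcal S_{\textit R}\theta_0\|_{H^s}^2\le\|\mathbf u_0\|_{H^s}^2+\|\theta_0\|_{H^s}^2$ by property (1) of $\mathcal S_{\textit R}$, both $\tilde T$ and the resulting supremum bounds are uniform in $R$. Reinstating the good viscous term and integrating over $[0,\tilde T]$ then bounds $\int_0^{\tilde T}\|\nabla\mathbf u^{\textit R}\|_{H^s}^2\,dt$ by $Z(0)$ plus the already-controlled right-hand side, yielding the third uniform estimate.
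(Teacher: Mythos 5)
Your proposal is correct and follows essentially the same route as the paper: apply $\Lambda^s$, pair in $L^2$, use the Fefferman et al.\ commutator bound (Lemma \ref{lce}) where no bound on $\|\nabla\theta^{\textit R}\|_{L^\infty}$ is available, add the $L^2$-level estimate to recover the full $H^s$ norms, absorb $\|\nabla\mathbf u^{\textit R}\|_{H^s}$ into the viscous dissipation by Young's inequality, and close with Bihari's inequality to get an $R$-independent lifespan. The only cosmetic difference is that you run the commutator/skew-symmetry argument for the velocity nonlinearity as well, where the paper simply invokes the algebra property of $H^s$; both yield the same bound $C\|\nabla\mathbf u^{\textit R}\|_{\H^s}\|\mathbf u^{\textit R}\|_{\H^s}^2$.
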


\begin{proof}
Let $\Lambda^s$ denote the fractional derivative operator as defined earlier.

Now for $s > n/2$, apply $\Lambda^s$ to both the equations \eqref{tr1} and \eqref{tr2}:
\begin{align}\label{ltr1}
\frac{\partial \Lambda^s \mathbf u^{\textit R}}{\partial t} + {\mathcal S}_{\textit R} \Lambda^s \left[ (\mathbf u^{\textit R} \cdot \nabla)\mathbf u^{\textit R} \right] - \nu \Delta \Lambda^s \mathbf u^{\textit R} + \nabla \Lambda^s p^{\textit R} = \Lambda^s \theta^{\textit R} f^{\textit R},
\end{align}
\begin{align}\label{ltr2}
\frac{\partial \Lambda^s \mathbf \theta^{\textit R}}{\partial t} + {\mathcal S}_{\textit R} \Lambda^s \left[ (\mathbf u^{\textit R} \cdot \nabla)\mathbf \theta^{\textit R} \right] = 0.
\end{align}

Taking ${L^2}$-inner product of \eqref{ltr1} with $\Lambda^s \mathbf u^{\textit R}$ and ${L^2}$-inner product of \eqref{ltr2} with $\Lambda^s \mathbf \theta^{\textit R}$ :
\begin{align}\label{sltr1}
\left( \frac{\partial \Lambda^s \mathbf u^{\textit R}}{\partial t}, \Lambda^s \mathbf u^{\textit R} \right)_{L^2} &+ \left( {\mathcal S}_{\textit R} \Lambda^s \left[ (\mathbf u^{\textit R} \cdot \nabla)\mathbf u^{\textit R} \right], \Lambda^s \mathbf u^{\textit R} \right)_{L^2} - \left( \nu \Delta \Lambda^s \mathbf u^{\textit R}, \Lambda^s \mathbf u^{\textit R} \right)_{L^2} \nonumber
\\ &+ \left( \nabla \Lambda^s p^{\textit R}, \Lambda^s \mathbf u^{\textit R} \right)_{L^2} = \left( \Lambda^s \theta^{\textit R} f^{\textit R}, \Lambda^s \mathbf u^{\textit R} \right)_{L^2},
\end{align}
\begin{align}\label{sltr2}
\left( \frac{\partial \Lambda^s \mathbf \theta^{\textit R}}{\partial t}, \Lambda^s \mathbf \theta^{\textit R} \right)_{L^2} + \left( {\mathcal S}_{\textit R} \Lambda^s \left[ (\mathbf u^{\textit R} \cdot \nabla)\mathbf \theta^{\textit R} \right], \Lambda^s \mathbf \theta^{\textit R} \right)_{L^2} = 0.
\end{align}

We now estimate each term of \eqref{sltr1} and \eqref{sltr2}.

\begin{enumerate}
\item
$\left( \frac{\partial \Lambda^s \mathbf u^{\textit R}}{\partial t}, \Lambda^s \mathbf u^{\textit R} \right)_{L^2}$
\[= \int_{B_R} \frac{\partial \Lambda^s \mathbf u^{\textit R}}{\partial t} \Lambda^s \mathbf u^{\textit R} \,dx = \frac{1}{2} \int_{B_R} \frac{\partial \left| \Lambda^s \mathbf u^{\textit R} \right|^2}{\partial t} = \frac{1}{2} \frac{d}{dt} \|\Lambda^s \mathbf u^{\textit R}\|^2_{\L^2}.\]

\item
Applying weak Parseval's identity and then using the fact that ${\mathcal S}_{\textit R} \mathbf u^{\textit R} = \mathbf u^{\textit R}$, since $\mathbf u^{\textit R} \in V^{\sigma}_R$ we get,
\[ \left( {\mathcal S}_{\textit R} \Lambda^s \left[ (\mathbf u^{\textit R} \cdot \nabla)\mathbf u^{\textit R} \right], \Lambda^s \mathbf u^{\textit R} \right)_{L^2} = \left( \Lambda^s \left[ (\mathbf u^{\textit R} \cdot \nabla)\mathbf u^{\textit R} \right], \Lambda^s \mathbf u^{\textit R} \right)_{L^2}. \]
By H\"older's inequality and Remark \ref{prodhs}, we have,
\begin{align*}
\left| \left( \Lambda^s \left[ (\mathbf u^{\textit R} \cdot \nabla)\mathbf u^{\textit R} \right], \Lambda^s \mathbf u^{\textit R} \right)_{L^2}\right| &\leq C \|\Lambda^s \left[ (\mathbf u^{\textit R} \cdot \nabla)\mathbf u^{\textit R} \right]\|_{\L^2} \|\Lambda^s \mathbf u^{\textit R}\|_{\L^2}
\\ &\leq C \|(\mathbf u^{\textit R} \cdot \nabla)\mathbf u^{\textit R}\|_{\dH^s} \| \mathbf u^{\textit R}\|_{\dH^s}
\\ &\leq C \|(\mathbf u^{\textit R} \cdot \nabla)\mathbf u^{\textit R}\|_{\H^s} \| \mathbf u^{\textit R}\|_{\H^s}
\\ &\leq C \|\mathbf u^{\textit R}\|_{\H^s} \| \nabla \mathbf u^{\textit R}\|_{\H^s} \| \mathbf u^{\textit R}\|_{\H^s}
\\ &= C \|\nabla \mathbf u^{\textit R}\|_{\H^s} \| \mathbf u^{\textit R}\|^2_{\H^s}.
\end{align*}

\item
Integration by parts yields,
\begin{align*}
- \left( \nu \Delta \Lambda^s \mathbf u^{\textit R}, \Lambda^s \mathbf u^{\textit R} \right)_{L^2} &= - \nu \int_{B^R} \Delta \Lambda^s \mathbf u^{\textit R}\cdot \Lambda^s \mathbf u^{\textit R} \,dx ,
\\ &= \nu \int_{B^R} \nabla \Lambda^s \mathbf u^{\textit R}\cdot \nabla \Lambda^s \mathbf u^{\textit R} \,dx = \nu \| \Lambda^s \nabla \mathbf u^{\textit R}\|^2_{\L^2}.
\end{align*}

\item
Again integration by parts and incompressibility condition yield $$\left( \nabla \Lambda^s p^{\textit R}, \Lambda^s \mathbf u^{\textit R} \right)_{L^2}=  -\left( \Lambda^s p^{\textit R}, \Lambda^s \nabla \cdot \mathbf u^{\textit R} \right)_{L^2} = 0.$$

\item Since $f\in L^2 \left(0, T ; H^s(\mathbb R^n)\right)$, 
\begin{align*}
\left| \left( \Lambda^s \theta^{\textit R} f^{\textit R}, \Lambda^s \mathbf u^{\textit R} \right)_{L^2} \right| &\leq \| \Lambda^s \theta^{\textit R} f^{\textit R} \|_{L^2} \| \Lambda^s \mathbf u^{\textit R} \|_{\L^2}
\\ &\leq C \| \theta^{\textit R} f^{\textit R} \|_{\dot H^s} \| \mathbf u^{\textit R} \|_{\dH^s}
\\ &\leq C \| \theta^{\textit R} f^{\textit R} \|_{H^s} \| \mathbf u^{\textit R} \|_{\H^s}
\\ &\leq C \| f^{\textit R} \|_{H^s} \| \theta^{\textit R}\|_{H^s} \| \mathbf u^{\textit R} \|_{\H^s}.
\end{align*}

\item
Similarly,
 \[ \left( \frac{\partial \Lambda^s \mathbf \theta^{\textit R}}{\partial t}, \Lambda^s \mathbf \theta^{\textit R} \right)_{L^2} =  \frac{1}{2} \frac{d}{dt} \|\Lambda^s \mathbf \theta^{\textit R}\|^2_{L^2}.\]
\item
Finally we observe that as in the calculation of the 2nd term,
\[ \left( {\mathcal S}_{\textit R} \Lambda^s \left[ (\mathbf u^{\textit R} \cdot \nabla)\mathbf \theta^{\textit R} \right], \Lambda^s \mathbf \theta^{\textit R} \right)_{L^2} = \left( \Lambda^s \left[ (\mathbf u^{\textit R} \cdot \nabla)\mathbf \theta^{\textit R} \right], \Lambda^s \mathbf \theta^{\textit R} \right)_{L^2}.\]
Taking $L^2$-inner product of  $\Lambda^s \left[ (\mathbf u^{\textit R} \cdot \nabla)\mathbf \theta^{\textit R} \right] - (\mathbf u^{\textit R} \cdot \nabla)(\Lambda^s \mathbf \theta^{\textit R})$ with $\Lambda^s \mathbf \theta^{\textit R}$ we obtain,
\begin{align*}
&\left| \left( \Lambda^s \left[ (\mathbf u^{\textit R} \cdot \nabla)\mathbf \theta^{\textit R} \right] - (\mathbf u^{\textit R} \cdot \nabla)(\Lambda^s \mathbf \theta^{\textit R}), \Lambda^s \mathbf \theta^{\textit R}\right)_{L^2} \right| 
\\ &= \left| \left( \Lambda^s \left[ (\mathbf u^{\textit R} \cdot \nabla)\mathbf \theta^{\textit R} \right], \Lambda^s \mathbf \theta^{\textit R}\right)_{L^2} - \left((\mathbf u^{\textit R} \cdot \nabla)(\Lambda^s \mathbf \theta^{\textit R}), \Lambda^s \mathbf \theta^{\textit R}\right)_{L^2} \right|
\\ &= \left| \left( \Lambda^s \left[ (\mathbf u^{\textit R} \cdot \nabla)\mathbf \theta^{\textit R} \right], \Lambda^s \mathbf \theta^{\textit R}\right)_{L^2}\right|.
\end{align*}
Now Lemma \ref{lce} yields,
\begin{align*}
& \left| \left( \Lambda^s \left[ (\mathbf u^{\textit R} \cdot \nabla)\mathbf \theta^{\textit R} \right] - (\mathbf u^{\textit R} \cdot \nabla)(\Lambda^s \mathbf \theta^{\textit R}), \Lambda^s \mathbf \theta^{\textit R}\right)_{L^2} \right|
\\ &\leq \| \Lambda^s \left[ (\mathbf u^{\textit R} \cdot \nabla)\mathbf \theta^{\textit R} \right] - (\mathbf u^{\textit R} \cdot \nabla)(\Lambda^s \mathbf \theta^{\textit R})\|_{L^2} \| \Lambda^s \mathbf \theta^{\textit R}\|_{L^2}
\\ &\leq C \| \nabla \mathbf u^{\textit R}\|_{\H^s} \| \mathbf \theta^{\textit R}\|_{H^s} \| \mathbf \theta^{\textit R}\|_{\dot H^s}
\leq C \| \nabla \mathbf u^{\textit R}\|_{\H^s} \| \mathbf \theta^{\textit R}\|^2_{H^s}
\end{align*}
So we get,
\[ \left| \left( \Lambda^s \left[ (\mathbf u^{\textit R} \cdot \nabla)\mathbf \theta^{\textit R} \right], \Lambda^s \mathbf \theta^{\textit R}\right)_{L^2} \right| \leq C \| \nabla \mathbf u^{\textit R}\|_{\H^s} \| \mathbf \theta^{\textit R}\|^2_{H^s}.\]
\end{enumerate}

Now adding both \eqref{sltr1} and \eqref{sltr2} and using all the estimates in steps (1)-(7) we obtain,
\begin{align}\label{hsdee}
\frac{1}{2} \frac{d}{dt} &\left( \|\mathbf u^{\textit R}\|^2_{\dot H^s}  + \| \mathbf \theta^{\textit R}\|^2_{\dot H^s}\right) +  \nu \| \nabla \mathbf u^{\textit R}\|^2_{\dot H^s} \nonumber
\\ &\leq  C \| \nabla \mathbf u^{\textit R}\|_{\H^s} (\| \mathbf \theta^{\textit R}\|^2_{H^s} +  \| \mathbf u^{\textit R}\|^2_{\H^s}) +  C \| f^{\textit R} \|_{H^s} \| \theta^{\textit R}\|_{H^s} \| \mathbf u^{\textit R} \|_{\H^s}.
\end{align}

Similarly taking ${L^2}$-inner product of \eqref{tr1} with $\mathbf u^{\textit R}$ and \eqref{tr2} with $\mathbf \theta^{\textit R}$, and using the estimate,
\begin{align*}
\left| (\mathbf \theta^{\textit R} f^{\textit R}, \mathbf u^{\textit R})\right| \leq \| \mathbf \theta^{\textit R} f^{\textit R}\|_{L^2} \| \mathbf u^{\textit R} \|_{\L^2} &\leq \| \mathbf \theta^{\textit R}\|_{L^2} \| f^{\textit R}\|_{L^\infty} \| \mathbf u^{\textit R} \|_{\L^2}\\ &\leq  \| f^{\textit R}\|_{H^s} \| \mathbf \theta^{\textit R}\|_{L^2} \| \mathbf u^{\textit R} \|_{\L^2}.
\end{align*}
we obtain the $L^2$-energy estimate as,
\begin{align}\label{l2ee}
\frac{1}{2} \frac{d}{dt} \left( \|\mathbf u^{\textit R}\|^2_{\L^2}  + \| \mathbf \theta^{\textit R}\|^2_{L^2}\right) +  \nu \| \nabla \mathbf u^{\textit R}\|^2_{\L^2} \leq  C \| f^{\textit R} \|_{H^s} \| \theta^{\textit R}\|_{H^s} \| \mathbf u^{\textit R} \|_{\H^s}
\end{align}
Now adding equations \eqref{hsdee} and \eqref{l2ee}, and applying Young's inequality we obtain,
\begin{align}\label{hse}
\frac{d}{dt} &\left( \|\mathbf u^{\textit R}\|^2_{\H^s}  + \| \mathbf \theta^{\textit R}\|^2_{H^s}\right) +  2\nu \| \nabla \mathbf u^{\textit R}\|^2_{\H^s} \nonumber
\\ &\leq  2C \| \nabla \mathbf u^{\textit R}\|_{\H^s} (\| \mathbf u^{\textit R}\|^2_{\H^s} +  \| \mathbf \theta^{\textit R}\|^2_{H^s}) +  2C \| f^{\textit R} \|_{H^s} \| \mathbf u^{\textit R}\|_{\H^s} \| \mathbf \theta^{\textit R} \|_{H^s}\nonumber
\\ &\leq \frac{C_1}{\nu} (\| \mathbf u^{\textit R}\|^2_{\H^s} +  \| \mathbf \theta^{\textit R}\|^2_{H^s})^2 + \nu \| \nabla \mathbf u^{\textit R}\|^2_{\H^s} + C_2 \| f^{\textit R} \|_{H^s} \left( \| \mathbf u^{\textit R}\|^2_{\H^s} + \| \mathbf \theta^{\textit R} \|^2_{H^s} \right).
\end{align}
Rearranging we get,
\begin{align}
\frac{d}{dt} &\left( \|\mathbf u^{\textit R}\|^2_{\H^s}  + \| \mathbf \theta^{\textit R}\|^2_{H^s}\right) + \nu \| \nabla \mathbf u^{\textit R}\|^2_{\H^s} \nonumber
\\ &\leq \frac{C_1}{\nu} (\| \mathbf u^{\textit R}\|^2_{\H^s} +  \| \mathbf \theta^{\textit R}\|^2_{H^s})^2 + C_2 \| f^{\textit R} \|_{H^s} \left( \| \mathbf u^{\textit R}\|^2_{\H^s} + \| \mathbf \theta^{\textit R} \|^2_{H^s} \right).
\end{align}
Let $X(t) = \|\mathbf u^{\textit R}(t)\|^2_{\H^s}  + \| \mathbf \theta^{\textit R}(t)\|^2_{H^s}.$ Then 
\begin{align*}
\frac{dX(t)}{dt} &\leq \frac{C_1}{\nu} X(t)^2 + C_2 \|f^{\textit R}\|_{H^s} X(t),
\\ &\leq \frac{C_1}{\nu} X(t)^2 + \frac{C_2}{2} \left( \|f^{\textit R}\|^2_{H^s} +  X(t)^2 \right).
\end{align*}
Integrating in $t \in [0, T]$, we have,
\begin{align*}
X(t) &\leq X_{0} + \frac{C_1}{\nu} \int^{t}_{0} X(t)^2 \,dt + \frac{C_2}{2} \left( \int^{t}_{0} \|f^{\textit R}\|^2_{H^s} \,dt +  \int^{t}_{0} X(t)^2 \,dt \right)
\\ &\leq X_{0} + \left( \frac{2 C_1 + \nu C_2}{2 \nu} \right) \int^{T}_{0} X(t)^2 \,dt + C_3 
 \\ &\leq C + \left( \frac{2 C_1 + \nu C_2}{2 \nu} \right) \int^{T}_{0} X(t)^2 \,dt .
\end{align*}
Therefore Bihari's inequality (see Lemma \ref{BE}), yields for all $0 \leq t \leq T$,
\[ X(t) \leq \frac{C}{1 - \left( \frac{2 C_1 + \nu C_2}{2 \nu}\right)CT } = \frac{X_0 + C_3}{1 - \left( \frac{2 C_1 + \nu C_2}{2 \nu}\right)(X_0 + C_3)T }.\]
So provided we choose $\tilde T < \frac{2 \nu}{(2 C_1 + \nu C_2)(X_0 + C_3)}$, $\| \mathbf u^{\textit R}\|_{\H^s}$ and  $\| \mathbf \theta^{\textit R} \|_{H^s}$ remain bounded on $[0, \tilde T]$ independent of $R$.

Finally going back to \eqref{hse} and dropping the first term from the left hand side we get for $0 \leq t \leq \tilde T$,
\[\int^{\tilde T}_{0} \| \nabla \mathbf u^{\textit R}(t)\|^2_{\H^s} \,dt \leq C < \infty.\]
\end{proof}

\subsection{Local Existence and Uniqueness.}\label{EU}

In this subsection, we will prove the existence and uniqueness of the local-in time strong solution Boussinesq equations \eqref{B1} and \eqref{B2}. Throughout we will assume that $f^{\textit R} \in L^{\infty}\left( [0, \tilde T]; H^s(B_{R})\right)$ for every $R>0$. First we will prove the following important result. 

\begin{proposition}\label{Ca}
The family $(\mathbf u^{\textit R}, \mathbf \theta^{\textit R})$ of solutions of the equations are Cauchy $\left( as \left. \right. R \to \infty \right)$ in $L^{\infty}\left([0, \tilde T]; \L^2(\mathbb R^n)\right)\times L^{\infty}\left([0, \tilde T]; L^2(\mathbb R^n)\right)$.
\end{proposition}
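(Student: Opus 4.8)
The plan is to run an $L^2$ energy estimate on the differences and close it with Gr\"onwall, showing that the $L^2$ norm of the difference is controlled by the (vanishing) discrepancy of the truncated data together with error terms that disappear as $R,R'\to\infty$. Write $\mathbf w:=\mathbf u^R-\mathbf u^{R'}$ and $\eta:=\theta^R-\theta^{R'}$, and assume without loss of generality $R\le R'$. Subtracting \eqref{tr1} for $R'$ from that for $R$, and likewise \eqref{tr2}, I would take the $L^2$ inner product of the velocity difference equation with $\mathbf w$ and of the temperature difference equation with $\eta$. The pressure term drops out by incompressibility, and the viscous term furnishes the favourable contribution $\nu\|\nabla\mathbf w\|_{\L^2}^2$ on the left-hand side, which will be the workhorse of the estimate.

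For the nonlinear terms I would use the splitting
\[
\mathcal S_R[(\mathbf u^R\cdot\nabla)\mathbf u^R]-\mathcal S_{R'}[(\mathbf u^{R'}\cdot\nabla)\mathbf u^{R'}]
=\mathcal S_R\big[(\mathbf w\cdot\nabla)\mathbf u^R+(\mathbf u^{R'}\cdot\nabla)\mathbf w\big]
+(\mathcal S_R-\mathcal S_{R'})\big[(\mathbf u^{R'}\cdot\nabla)\mathbf u^{R'}\big],
\]
and the analogous identity for the temperature nonlinearity. The last summand is a pure truncation discrepancy: pairing it with $\mathbf w$ and using that $\mathcal S_R$ is a contraction on $L^2$, property (3) of the truncation operator (Subsection \ref{FTO}) with $k=s-1$, and Remark \ref{div} together with the $R$-uniform bounds of Proposition \ref{fin}, its contribution is $\lesssim R^{-(s-1)}\|\mathbf u^{R'}\|_{\H^s}^2\,\|\mathbf w\|_{\L^2}$, which after Young's inequality (Lemma \ref{Ye}) yields a Gr\"onwall term $\tfrac12\|\mathbf w\|_{\L^2}^2$ plus a genuine error of size $R^{-2(s-1)}$; the temperature analogue is handled identically via Remark \ref{div}.

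The genuine transport piece $\mathcal S_R[(\mathbf w\cdot\nabla)\mathbf u^R+(\mathbf u^{R'}\cdot\nabla)\mathbf w]$ paired with $\mathbf w$ is bounded by $\|(\mathbf w\cdot\nabla)\mathbf u^R\|_{\L^2}\|\mathbf w\|_{\L^2}+\|(\mathbf u^{R'}\cdot\nabla)\mathbf w\|_{\L^2}\|\mathbf w\|_{\L^2}$ (again since $\mathcal S_R$ is an $L^2$-contraction). I would estimate the first factor by Remark \ref{r23} (with Remark \ref{gradhs}) as $\lesssim\|\mathbf w\|_{\H^1}\|\mathbf u^R\|_{\H^s}$ and the second by $\|\mathbf u^{R'}\|_{L^\infty}\|\nabla\mathbf w\|_{\L^2}\lesssim\|\mathbf u^{R'}\|_{\H^s}\|\nabla\mathbf w\|_{\L^2}$, and then use Young's inequality to absorb every $\|\nabla\mathbf w\|_{\L^2}$ factor into $\nu\|\nabla\mathbf w\|_{\L^2}^2$, leaving Gr\"onwall coefficients of the form $\tfrac{C}{\nu}\|\mathbf u^R\|_{\H^s}^2$ that are bounded on $[0,\tilde T]$ by Proposition \ref{fin}. \emph{Here the viscosity is indispensable}: since we only assume $s>n/2$, the factor $\nabla\theta^R$ in the temperature transport term $((\mathbf w\cdot\nabla)\theta^R,\eta)$ does not lie in $L^\infty$, so I cannot bound it by $\|\mathbf w\|_{\L^2}$ alone; instead I would invoke Remark \ref{r23} to get $\|(\mathbf w\cdot\nabla)\theta^R\|_{L^2}\lesssim\|\mathbf w\|_{\H^1}\|\theta^R\|_{H^s}$ and again absorb the resulting $\|\nabla\mathbf w\|_{\L^2}$ into the dissipation. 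Finally, the forcing difference $\theta^R f^R-\theta^{R'}f^{R'}=\eta\,f^R+\theta^{R'}(\mathcal S_R-\mathcal S_{R'})f$ splits into a Gr\"onwall term and, using $\|\theta^{R'}\|_{L^\infty}\lesssim\|\theta^{R'}\|_{H^s}$ and $f\in L^\infty([0,\tilde T];H^s)$, an error of size $R^{-s}\|f\|_{H^s}$.

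Collecting the estimates, with $Y(t):=\|\mathbf w(t)\|_{\L^2}^2+\|\eta(t)\|_{L^2}^2$ I expect a differential inequality $Y'(t)\le a(t)\,Y(t)+\varepsilon_{R,R'}(t)$ on $[0,\tilde T]$, where $a\in L^1(0,\tilde T)$ with an $R$-uniform bound (by Proposition \ref{fin}) and $\int_0^{\tilde T}\varepsilon_{R,R'}\to0$ as $R,R'\to\infty$. The initial datum is $Y(0)=\|(\mathcal S_R-\mathcal S_{R'})\mathbf u_0\|_{\L^2}^2+\|(\mathcal S_R-\mathcal S_{R'})\theta_0\|_{L^2}^2\lesssim R^{-2s}(\|\mathbf u_0\|_{\H^s}^2+\|\theta_0\|_{H^s}^2)\to0$ by property (3). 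Gr\"onwall's inequality then gives $\sup_{[0,\tilde T]}Y(t)\le\big(Y(0)+\int_0^{\tilde T}\varepsilon_{R,R'}\big)\exp\!\big(\int_0^{\tilde T}a\big)\to0$, which is exactly the asserted Cauchy property. The main obstacle is the temperature transport term under the weak hypothesis $s>n/2$: it is precisely the viscous smoothing of $\mathbf w$ (unavailable for $\eta$) combined with the product estimate of Remark \ref{r23} that allows one to avoid ever placing $\nabla\theta^R$ in $L^\infty$, and the careful bookkeeping of the $(\mathcal S_R-\mathcal S_{R'})$ discrepancies is what makes all error terms vanish in the limit.
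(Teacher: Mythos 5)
Your overall strategy --- an $L^2$ energy estimate on the differences, a splitting of each nonlinearity into a truncation discrepancy plus two genuine transport pieces, and Gr\"onwall --- is exactly the paper's, and most of your individual estimates (the $O(R^{-k})$ discrepancy terms via property (3) of Subsection \ref{FTO}, the use of Remark \ref{r23} for $(\mathbf w\cdot\nabla)\theta^{R}$ and $(\mathbf w\cdot\nabla)\mathbf u^{R}$ with absorption into the viscous term, the splitting of the forcing) coincide with what is done there. There is, however, one genuine gap, and it comes from the one place where your decomposition deviates from the paper's: you apply $\mathcal S_{R}$ to the middle bracket, writing the difference of nonlinearities as $\mathcal S_{R}\left[(\mathbf w\cdot\nabla)\mathbf u^{R}+(\mathbf u^{R'}\cdot\nabla)\mathbf w\right]+(\mathcal S_{R}-\mathcal S_{R'})\left[(\mathbf u^{R'}\cdot\nabla)\mathbf u^{R'}\right]$, whereas the paper applies $\mathcal S_{R'}$ there. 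The point of using $\mathcal S_{R'}$ is that $\mathbf w=\mathbf u^{R}-\mathbf u^{R'}$ and $\eta=\theta^{R}-\theta^{R'}$ have Fourier support in $B_{R'}$, not in $B_{R}$, so $\mathcal S_{R'}\mathbf w=\mathbf w$ and $\mathcal S_{R'}\eta=\eta$, and the terms $\left(\mathcal S_{R'}\left[(\mathbf u^{R'}\cdot\nabla)\mathbf w\right],\mathbf w\right)$ and $\left(\mathcal S_{R'}\left[(\mathbf u^{R'}\cdot\nabla)\eta\right],\eta\right)$ vanish exactly by the incompressibility cancellation. With your $\mathcal S_{R}$ they do not vanish, since $\mathcal S_{R}\eta\neq\eta$.

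For the velocity this costs you nothing: you estimate $\left((\mathbf u^{R'}\cdot\nabla)\mathbf w,\mathcal S_{R}\mathbf w\right)$ brutally by $\|\mathbf u^{R'}\|_{L^\infty}\|\nabla\mathbf w\|_{L^2}\|\mathbf w\|_{L^2}$ and absorb $\|\nabla\mathbf w\|_{L^2}$ into $\nu\|\nabla\mathbf w\|_{L^2}^2$. But the analogous temperature term $\left(\mathcal S_{R}\left[(\mathbf u^{R'}\cdot\nabla)\eta\right],\eta\right)$, which your write-up never addresses, cannot be treated the same way: there is no diffusion in the $\theta$ equation, so the factor $\|\nabla\eta\|_{L^2}$ has nowhere to go, and the resulting bound $CM^2\|\eta\|_{L^2}$ is not of Gr\"onwall type (it only yields $Y'\lesssim\sqrt{Y}$, which does not force $Y\to0$ as $R\to\infty$). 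The gap is fixable in two ways: either switch to the paper's decomposition with $\mathcal S_{R'}$, so the term vanishes identically; or keep $\mathcal S_{R}$, write $\mathcal S_{R}\eta=\eta-(\mathcal S_{R'}-\mathcal S_{R})\eta$, use the exact cancellation for the first piece, and bound the second by $\|(\mathbf u^{R'}\cdot\nabla)\eta\|_{L^2}\,\|(\mathcal S_{R'}-\mathcal S_{R})\eta\|_{L^2}\lesssim M^2\,R^{-k}\|\eta\|_{H^{k}}\lesssim M^3R^{-k}$, an admissible vanishing error. Everything else in your argument, including the observation (left implicit in the paper) that the initial discrepancy $Y(0)$ must also vanish, is sound.
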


\begin{proof}
Consider the equations \eqref{tr1} and \eqref{tr2}, and take the difference between the equations for $R$ and $R'$ where $R' > R$ to get:
\begin{align}\label{dif1}
&\frac{\partial (\mathbf u^{\textit R} - \mathbf u^{\textit R'})}{\partial t}  - \nu \Delta (\mathbf u^{\textit R} - \mathbf u^{\textit R'}) + \nabla (p^{\textit R} - p^{\textit R'}) \nonumber
\\ &= (\theta^{\textit R} f^{\textit R} - \theta^{\textit R'} f^{\textit R'}) - {\mathcal S}_{\textit R} \left[ (\mathbf u^{\textit R} \cdot \nabla)\mathbf u^{\textit R} \right] + {\mathcal S}_{\textit R'} \left[ (\mathbf u^{\textit R'} \cdot \nabla)\mathbf u^{\textit R'} \right],
\end{align}
\begin{align}\label{dif2}
\frac{\partial (\mathbf \theta^{\textit R} - \mathbf \theta^{\textit R'})}{\partial t} + {\mathcal S}_{\textit R} \left[ (\mathbf u^{\textit R} \cdot \nabla)\mathbf \theta^{\textit R} \right] - {\mathcal S}_{\textit R'} \left[ (\mathbf u^{\textit R'} \cdot \nabla)\mathbf \theta^{\textit R'} \right] = 0.
\end{align}
Taking the inner product of \eqref{dif1} with $\mathbf u^{\textit R} - \mathbf u^{\textit R'}$ and the inner product of \eqref{dif2} with $\mathbf \theta^{\textit R} - \mathbf \theta^{\textit R'}$, and calculating in the similar manner as in Proposition \ref{fin}, we obtain
\begin{align}\label{dif}
&\frac{1}{2} \frac{d}{dt}\left( \|\mathbf u^{\textit R} - \mathbf u^{\textit R'}\|^2_{\L^2}  + \| \mathbf \theta^{\textit R} - \mathbf \theta^{\textit R'}\|^2_{L^2}\right) +  \nu \| \nabla (\mathbf u^{\textit R} - \mathbf u^{\textit R'}) \|^2_{\L^2} \nonumber
\\ &= \left( \theta^{\textit R} f^{\textit R} - \theta^{\textit R'} f^{\textit R'}, \mathbf u^{\textit R} - \mathbf u^{\textit R'}\right)\nonumber\\ &\ \ \ - \left( {\mathcal S}_{\textit R} \left[ (\mathbf u^{\textit R} \cdot \nabla)\mathbf u^{\textit R} \right] - {\mathcal S}_{\textit R'} \left[ (\mathbf u^{\textit R'} \cdot \nabla)\mathbf u^{\textit R'} \right], \mathbf u^{\textit R} - \mathbf u^{\textit R'}\right) \nonumber
\\ & \ \ \ - \left( {\mathcal S}_{\textit R} \left[ (\mathbf u^{\textit R} \cdot \nabla)\mathbf \theta^{\textit R} \right] - {\mathcal S}_{\textit R'} \left[ (\mathbf u^{\textit R'} \cdot \nabla)\mathbf \theta^{\textit R'} \right], \mathbf \theta^{\textit R} - \mathbf \theta^{\textit R'} \right).
\end{align}
We split $ \left( {\mathcal S}_{\textit R} \left[ (\mathbf u^{\textit R} \cdot \nabla)\mathbf \theta^{\textit R} \right] - {\mathcal S}_{\textit R'} \left[ (\mathbf u^{\textit R'} \cdot \nabla)\mathbf \theta^{\textit R'} \right], \mathbf \theta^{\textit R} - \mathbf \theta^{\textit R'} \right)$ as sum of three parts:
\begin{align}\label{3t}
&\left( ({\mathcal S}_{\textit R} - {\mathcal S}_{\textit R'}) \left[ (\mathbf u^{\textit R} \cdot \nabla)\mathbf \theta^{\textit R} \right], \mathbf \theta^{\textit R} - \mathbf \theta^{\textit R'} \right)\nonumber
+ \left( {\mathcal S}_{\textit R'} \left[ ((\mathbf u^{\textit R}-\mathbf u^{\textit R'}) \cdot \nabla)\mathbf \theta^{\textit R} \right], \mathbf \theta^{\textit R} - \mathbf \theta^{\textit R'} \right)\nonumber
\\&\ \ \ + \left( {\mathcal S}_{\textit R'} \left[ (\mathbf u^{\textit R'} \cdot \nabla)(\mathbf \theta^{\textit R} - \mathbf \theta^{\textit R'}) \right], \mathbf \theta^{\textit R} - \mathbf \theta^{\textit R'} \right),
\end{align}
and estimate each part separately. Note that the third term of \eqref{3t} vanishes, due to weak Parseval's identity, integration by parts, and the divergence free condition of $\mathbf u^{\textit R}$.

For the first term of \eqref{3t}, using the properties of Fourier truncation operator, we obtain,
\[ \| ({\mathcal S}_{\textit R} - {\mathcal S}_{\textit R'}) \left[ (\mathbf u^{\textit R} \cdot \nabla)\mathbf \theta^{\textit R} \right]\|_{H^s} \leq \frac{C}{\textit R^{\epsilon}} \| ( \mathbf u^{\textit R} \cdot \nabla)\mathbf \theta^{\textit R} \|_{H^{s+\epsilon}}.\]

Let $0 < \epsilon < s-1$. Then $H^s$ being an algebra for $s > n/2$, by Remark \ref{r23} we obtain,
\begin{align*}
&\left| \left( ({\mathcal S}_{\textit R} - {\mathcal S}_{\textit R'}) \left[ (\mathbf u^{\textit R} \cdot \nabla)\mathbf \theta^{\textit R} \right], \mathbf \theta^{\textit R} - \mathbf \theta^{\textit R'} \right) \right| \\ &\leq \| ({\mathcal S}_{\textit R} - {\mathcal S}_{\textit R'}) \left[ (\mathbf u^{\textit R} \cdot \nabla)\mathbf \theta^{\textit R} \right] \|_{L^2} \| \mathbf \theta^{\textit R} - \mathbf \theta^{\textit R'}\|_{L^2}
\\ &= \| ({\mathcal S}_{\textit R} - {\mathcal S}_{\textit R'}) \left[ (\mathbf u^{\textit R} \cdot \nabla)\mathbf \theta^{\textit R} \right] \|_{H^0} \| \mathbf \theta^{\textit R} - \mathbf \theta^{\textit R'}\|_{L^2}
\leq \frac{C}{\textit R^{\epsilon}} \| ( \mathbf u^{\textit R} \cdot \nabla)\mathbf \theta^{\textit R} \|_{H^{\epsilon}} \| \mathbf \theta^{\textit R} - \mathbf \theta^{\textit R'}\|_{L^2}
\\ &= \frac{C}{\textit R^{\epsilon}} \| \nabla \cdot ( \mathbf u^{\textit R} \mathbf \theta^{\textit R}) \|_{H^{\epsilon}} \| \mathbf \theta^{\textit R} - \mathbf \theta^{\textit R'}\|_{L^2}
\leq \frac{C}{\textit R^{\epsilon}} \| \mathbf u^{\textit R} \mathbf \theta^{\textit R} \|_{H^s} \| \mathbf \theta^{\textit R} - \mathbf \theta^{\textit R'}\|_{L^2}
\\ &\leq \frac{C}{\textit R^{\epsilon}} \| \mathbf u^{\textit R}\|_{\H^s} \| \mathbf \theta^{\textit R} \|_{H^s} \| \mathbf \theta^{\textit R} - \mathbf \theta^{\textit R'}\|_{L^2} 
\leq \frac{C}{\textit R^{\epsilon}} \left( \| \mathbf u^{\textit R}\|^2_{\H^s} + \| \mathbf \theta^{\textit R} \|^2_{H^s}\right) \| \mathbf \theta^{\textit R} - \mathbf \theta^{\textit R'}\|_{L^2}.
\end{align*}
Estimates of the second term of \eqref{3t} in two and three dimensions are different. In either case, using Parseval's identity, H\"older's inequality, Young's inequality and Remark \ref{r23}, we obtain,
\begin{align*}
&\left| \left( {\mathcal S}_{\textit R'} \left[ ((\mathbf u^{\textit R}-\mathbf u^{\textit R'}) \cdot \nabla)\mathbf \theta^{\textit R} \right], \mathbf \theta^{\textit R} - \mathbf \theta^{\textit R'} \right) \right|\\
&\ \ \ = \left| \left( ((\mathbf u^{\textit R}-\mathbf u^{\textit R'}) \cdot \nabla)\mathbf \theta^{\textit R}, {\mathcal S}_{\textit R'} (\mathbf \theta^{\textit R} - \mathbf \theta^{\textit R'}) \right) \right| 
\\ &\ \ \ \leq \| ((\mathbf u^{\textit R}-\mathbf u^{\textit R'}) \cdot \nabla)\mathbf \theta^{\textit R} \|_{L^2} \|{\mathcal S}_{\textit R'} (\mathbf \theta^{\textit R} - \mathbf \theta^{\textit R'})\|_{L^2}
\\ &\ \ \ \leq C \| \mathbf u^{\textit R}-\mathbf u^{\textit R'} \|_{\H^1} \| \nabla \mathbf \theta^{\textit R} \|_{H^{s-1}} \|{\mathcal S}_{\textit R'} (\mathbf \theta^{\textit R} - \mathbf \theta^{\textit R'})\|_{L^2}
\\ &\ \ \ = (\| \mathbf u^{\textit R}-\mathbf u^{\textit R'} \|_{\H^1}) (C\| \mathbf \theta^{\textit R} \|_{H^{s}} \|{\mathcal S}_{\textit R'} (\mathbf \theta^{\textit R} - \mathbf \theta^{\textit R'})\|_{L^2})
\\ &\ \ \ \leq \frac{\nu}{4}\| \mathbf u^{\textit R}-\mathbf u^{\textit R'} \|^2_{\H^1} + \frac{C}{\nu}\| \mathbf \theta^{\textit R} \|^2_{H^{s}} \|\mathbf \theta^{\textit R} - \mathbf \theta^{\textit R'}\|^2_{L^2}.
\end{align*}
Therefore we obtain,
\begin{align}\label{t2}
&\left| \left( {\mathcal S}_{\textit R} \left[ (\mathbf u^{\textit R} \cdot \nabla)\mathbf u^{\textit R} \right] - {\mathcal S}_{\textit R'} \left[ (\mathbf u^{\textit R'} \cdot \nabla)\mathbf u^{\textit R'} \right], \mathbf \theta^{\textit R} - \mathbf \theta^{\textit R'}\right) \right|\nonumber
\\ &\leq \frac{C}{R^\epsilon} \left( \| \mathbf u^{\textit R}\|^2_{\H^s} + \| \mathbf \theta^{\textit R}\|^2_{H^s} \right) \| \mathbf \theta^{\textit R}-\mathbf \theta^{\textit R'}\|_{L^2} + \frac{\nu}{4} \| \mathbf u^{\textit R}-\mathbf u^{\textit R'}\|^2_{\H^1} \nonumber\\
&\ \ \ \ + \frac{C}{\nu} \| \mathbf \theta^{\textit R}\|^2_{H^s} \| \mathbf \theta^{\textit R}-\mathbf \theta^{\textit R'}\|^2_{L^2}.
\end{align}
Similarly, we split the second term on the right hand side of \eqref{dif}, i.e. the term $\left( {\mathcal S}_{\textit R} \left[ (\mathbf u^{\textit R} \cdot \nabla)\mathbf u^{\textit R} \right] - {\mathcal S}_{\textit R'} \left[ (\mathbf u^{\textit R'} \cdot \nabla)\mathbf u^{\textit R'} \right], \mathbf u^{\textit R} - \mathbf u^{\textit R'}\right)$ as sum of three parts
\begin{align}\label{sur}
&\left( ({\mathcal S}_{\textit R}-{\mathcal S}_{\textit R'}) \left[ (\mathbf u^{\textit R} \cdot \nabla)\mathbf u^{\textit R} \right], \mathbf u^{\textit R} - \mathbf u^{\textit R'}\right)\nonumber + \left( {\mathcal S}_{\textit R'} \left[ ((\mathbf u^{\textit R}-\mathbf u^{\textit R'}) \cdot \nabla)\mathbf u^{\textit R} \right], \mathbf u^{\textit R} - \mathbf u^{\textit R'}\right)\nonumber
\\ & \ \ \ + \left( {\mathcal S}_{\textit R'} \left[ (\mathbf u^{\textit R'} \cdot \nabla)(\mathbf u^{\textit R} - \mathbf u^{\textit R'}) \right], \mathbf u^{\textit R} - \mathbf u^{\textit R'}\right), 
\end{align}
and estimate each part separately as before to get,
\begin{align}\label{t1}
&\left| \left( {\mathcal S}_{\textit R} \left[ (\mathbf u^{\textit R} \cdot \nabla)\mathbf u^{\textit R} \right] - {\mathcal S}_{\textit R'} \left[ (\mathbf u^{\textit R'} \cdot \nabla)\mathbf u^{\textit R'} \right], \mathbf u^{\textit R} - \mathbf u^{\textit R'}\right) \right|\nonumber
\\ &\leq \frac{C}{R^\epsilon} \| \mathbf u^{\textit R}\|^2_{\H^s} \| \mathbf u^{\textit R}-\mathbf u^{\textit R'}\|_{\L^2} +  \frac{\nu}{4} \| \mathbf u^{\textit R}-\mathbf u^{\textit R'}\|^2_{\H^1} \nonumber\\
&\ \ \ \ + \frac{C}{\nu} \| \mathbf u^{\textit R}\|^2_{\H^s} \| \mathbf u^{\textit R}-\mathbf u^{\textit R'}\|^2_{\L^2}.
\end{align}
Now we estimate the first term of the right hand side of \eqref{dif},
\begin{align}\label{t3}
&\left| \left( \theta^{\textit R} f^{\textit R} - \theta^{\textit R'} f^{\textit R'}, \mathbf u^{\textit R} - \mathbf u^{\textit R'}\right) \right|\nonumber\\&\ \ \ \ \leq \| \theta^{\textit R} f^{\textit R} - \theta^{\textit R'} f^{\textit R'} \|_{L^2} \| \mathbf u^{\textit R} - \mathbf u^{\textit R'} \|_{\L^2}\nonumber\\
&\ \ \ \ \leq \left(\| \theta^{\textit R} f^{\textit R} - \theta^{\textit R} f^{\textit R'} \|_{L^2} + \| \theta^{\textit R} f^{\textit R'} -  \theta^{\textit R'} f^{\textit R'} \|_{L^2}\right)\| \mathbf u^{\textit R} - \mathbf u^{\textit R'} \|_{\L^2}
\nonumber \\ &\ \ \ \ \leq \left(\| \theta^{\textit R}\|_{L^\infty} \| f^{\textit R} -  f^{\textit R'}\|_{L^2} + \| f^{\textit R'}\|_{L^\infty} \|\theta^{\textit R} -  \theta^{\textit R'} \|_{L^2}\right) \|\mathbf u^{\textit R} - \mathbf u^{\textit R'} \|_{\L^2}
\nonumber \\ &\ \ \ \ \leq C\left(\| \theta^{\textit R}\|_{H^s} \| f^{\textit R} -  f^{\textit R'}\|_{L^2} + \| f^{\textit R'}\|_{H^s} \|\theta^{\textit R} -  \theta^{\textit R'} \|_{L^2}\right) \|\mathbf u^{\textit R} - \mathbf u^{\textit R'} \|_{\L^2}\nonumber
\\ &\ \ \ \ \leq C \| \theta^{\textit R}\|_{H^s}\left(  \|\mathbf u^{\textit R} - \mathbf u^{\textit R'} \|_{\L^2}^2 + \| f^{\textit R} -  f^{\textit R'}\|_{L^2}^2\right)\nonumber\\
&\ \ \ \ \ \ +  C\| f^{\textit R'}\|_{H^s} \ \left( \|\mathbf u^{\textit R} - \mathbf u^{\textit R'} \|_{\L^2}^2 + \|\theta^{\textit R} -  \theta^{\textit R'} \|_{L^2}^2 \right).
\end{align}
Using the estimates obtained in \eqref{t2}, \eqref{t1} and \eqref{t3} in \eqref{dif}, and rearranging we obtain,
\begin{align}\label{nule}
&\frac{d}{dt} \left( \|\mathbf u^{\textit R} - \mathbf u^{\textit R'}\|^2_{\L^2}  + \| \mathbf \theta^{\textit R} - \mathbf \theta^{\textit R'}\|^2_{L^2}\right) +  \nu \| \nabla (\mathbf u^{\textit R} - \mathbf u^{\textit R'}) \|^2_{\L^2}\nonumber \\
&\ \ \ \ \leq \frac{C}{R^\epsilon} \left( \| \mathbf u^{\textit R}\|^2_{\H^s} + \| \mathbf \theta^{\textit R}\|^2_{H^s} \right) \left( \|\mathbf u^{\textit R} - \mathbf u^{\textit R'}\|_{\L^2}  + \| \mathbf \theta^{\textit R} - \mathbf \theta^{\textit R'}\|_{L^2}\right) 
\nonumber \\ &\ \ \ \ \quad+ \frac{C}{\nu} \left( \| \mathbf u^{\textit R}\|^2_{\H^s} + \| \mathbf \theta^{\textit R}\|^2_{H^s} \right) \left( \|\mathbf u^{\textit R} - \mathbf u^{\textit R'}\|^2_{\L^2} + \| \mathbf \theta^{\textit R} - \mathbf \theta^{\textit R'}\|^2_{L^2}\right)\nonumber
\\ &\ \ \ \ \quad+ C \| \theta^{\textit R}\|_{H^s}\left(  \|\mathbf u^{\textit R} - \mathbf u^{\textit R'} \|_{\L^2}^2 + \|\theta^{\textit R} -  \theta^{\textit R'} \|_{L^2}^2 + \| f^{\textit R} -  f^{\textit R'}\|_{L^2}^2\right)\nonumber
\\ &\ \ \ \ \quad+  C\| f^{\textit R'}\|_{H^s}  \left( \|\mathbf u^{\textit R} - \mathbf u^{\textit R'} \|_{\L^2}^2 + \|\theta^{\textit R} -  \theta^{\textit R'} \|_{L^2}^2 \right).
\end{align}
Setting $Y(t) = \| \mathbf u^{\textit R}-\mathbf u^{\textit R'}\|_{\L^2} + \| \mathbf \theta^{\textit R} - \mathbf \theta^{\textit R'}\|_{L^2}$, using the bounds \[ \sup_{t \in [0, \tilde T]} \| \mathbf u^{\textit R}(t)\|_{\H^s}\leq M, \left. \left. \left. \right. \right. \right. \sup_{t \in [0, \tilde T]}\| \mathbf \theta^{\textit R}(t)\|_{H^s}\leq M, \left. \left. \left. \right. \right. \right.\] and recalling $f^{\textit R} \in L^{\infty}\left( [0, \tilde T]; H^s(B_{R})\right)$ for every $R>0$, we have from \eqref{nule}
\begin{align}\label{nule1}
&\left. Y\right. \frac{dY}{dt} + \nu \| \nabla (\mathbf u^{\textit R} - \mathbf u^{\textit R'}) \|^2_{\L^2}\nonumber
\\ &\ \ \ \ \leq \frac{d}{dt} \left( \|\mathbf u^{\textit R} - \mathbf u^{\textit R'}\|^2_{\L^2}  + \| \mathbf \theta^{\textit R} - \mathbf \theta^{\textit R'}\|^2_{L^2}\right) + \nu \| \nabla (\mathbf u^{\textit R} - \mathbf u^{\textit R'}) \|^2_{\L^2} \nonumber
\\ &\ \ \ \ \leq CMY^2 + \frac{CM}{R^\epsilon}Y.
\end{align}
Therefore, ignoring the second term of the left hand side, we obtain
\begin{align*}
Y \frac{dY}{dt} \leq MY^2 + \frac{M}{R^\epsilon}Y,
\end{align*}
which further yields
\begin{align*}
\frac{dY}{dt} \leq MY + \frac{M}{R^\epsilon}.
\end{align*}
Finally applying Gronwall's lemma, we find
\begin{align}\label{ytr}
 \sup_{t \in [0, \tilde T]} Y(t) \leq \frac{C(\tilde T, M)}{R^\epsilon} \to 0
\end{align}
as $R \to\infty$ (since $R'>R, R'\to\infty$ as well),
concluding that $(\mathbf u^{\textit R}, \mathbf \theta^{\textit R})$ are Cauchy in $L^{\infty}\left([0, \tilde T]; \L^2(\mathbb R^n)\right) \times L^{\infty}\left([0, \tilde T]; L^2(\mathbb R^n)\right)$ as $R \to \infty$.

\end{proof}

\begin{proposition}\label{La}
For any $s' > n/2$, $\Delta \mathbf u^{\textit R} \to \Delta \mathbf u$ strongly in $L^2 \left([0, \tilde T]; \H^{{s'}-1}(\mathbb R^n)\right)$ as $R \to \infty$.
\end{proposition}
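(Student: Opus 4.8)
The plan is to upgrade the $L^\infty_t\L^2$ convergence supplied by Proposition~\ref{Ca} to strong convergence of the Laplacian in the weaker topology $L^2([0,\tilde T];\H^{s'-1})$ by Sobolev interpolation, for $s'$ in the range $n/2<s'<s$. Two ingredients from the preceding results drive the argument. First, Proposition~\ref{fin} gives $\sup_{t\in[0,\tilde T]}\|\mathbf u^{\textit R}(t)\|_{\H^s}\le M$ together with $\int_0^{\tilde T}\|\nabla\mathbf u^{\textit R}(t)\|_{\H^s}^2\,dt\le C$; since $\|\mathbf u^{\textit R}\|_{\H^{s+1}}^2=\|\mathbf u^{\textit R}\|_{\H^s}^2+\|\nabla\mathbf u^{\textit R}\|_{\H^s}^2$ at the level of Fourier multipliers, this yields a uniform bound for $\mathbf u^{\textit R}$ in $L^2([0,\tilde T];\H^{s+1})$, hence for each difference $\mathbf u^{\textit R}-\mathbf u^{\textit R'}$. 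Second, Proposition~\ref{Ca} gives $\sup_{t\in[0,\tilde T]}\|\mathbf u^{\textit R}(t)-\mathbf u^{\textit R'}(t)\|_{\L^2}\to 0$ as $R,R'\to\infty$. I will combine these to show that $\Delta\mathbf u^{\textit R}$ is Cauchy in $L^2([0,\tilde T];\H^{s'-1})$.

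For the interpolation step, I first record the elementary multiplier bound $\|\Delta f\|_{H^{s'-1}}\le\|f\|_{H^{s'+1}}$, which follows from $|\xi|^2\le(1+|\xi|^2)$. Applying this to $f=\mathbf u^{\textit R}-\mathbf u^{\textit R'}$ and then invoking Lemma~\ref{iss} (with the roles of $s,s'$ played by $s+1,s'+1$) with $\theta:=\frac{s'+1}{s+1}\in(0,1)$, I obtain, pointwise in $t$,
\[
\|\Delta(\mathbf u^{\textit R}-\mathbf u^{\textit R'})\|_{\H^{s'-1}}
\le\|\mathbf u^{\textit R}-\mathbf u^{\textit R'}\|_{\H^{s'+1}}
\le C\,\|\mathbf u^{\textit R}-\mathbf u^{\textit R'}\|_{\L^2}^{1-\theta}\,\|\mathbf u^{\textit R}-\mathbf u^{\textit R'}\|_{\H^{s+1}}^{\theta}.
\]
Squaring, integrating over $[0,\tilde T]$, and applying H\"older's inequality in time with the conjugate exponents $1/\theta$ and $1/(1-\theta)$ then gives
\[
\int_0^{\tilde T}\|\Delta(\mathbf u^{\textit R}-\mathbf u^{\textit R'})\|_{\H^{s'-1}}^2\,dt
\le C\Big(\int_0^{\tilde T}\|\mathbf u^{\textit R}-\mathbf u^{\textit R'}\|_{\H^{s+1}}^2\,dt\Big)^{\theta}
\Big(\int_0^{\tilde T}\|\mathbf u^{\textit R}-\mathbf u^{\textit R'}\|_{\L^2}^2\,dt\Big)^{1-\theta}.
\]

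To close, I observe that the first factor on the right is uniformly bounded by the $L^2([0,\tilde T];\H^{s+1})$ estimate noted above, while the second factor is dominated by $\tilde T\,\sup_{t\in[0,\tilde T]}\|\mathbf u^{\textit R}-\mathbf u^{\textit R'}\|_{\L^2}^2$, which tends to $0$ as $R,R'\to\infty$ by Proposition~\ref{Ca}. Hence $\Delta\mathbf u^{\textit R}$ is Cauchy in $L^2([0,\tilde T];\H^{s'-1})$ and converges strongly to some limit; since $\mathbf u^{\textit R}\to\mathbf u$ already in $L^\infty([0,\tilde T];\L^2)$ and $\Delta$ is continuous on distributions, this limit is forced to coincide with $\Delta\mathbf u$, which establishes the claim.

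I expect the only genuinely delicate point to be the bookkeeping of the interpolation and H\"older exponents: one must select $\theta=(s'+1)/(s+1)$ precisely so that, after the time integration, the powers of the $\L^2$ and $\H^{s+1}$ norms recombine into the full $L^2_t\L^2_x$ and $L^2_t\H^{s+1}_x$ norms. This forces the restriction $s'<s$, so that $\theta<1$ and the vanishing $\L^2$ factor carries a strictly positive power; the constraint $s'>n/2$ does not enter the estimate itself and merely fixes the topology in which the limiting equation is subsequently interpreted. The pertinent regime is therefore $n/2<s'<s$, consistent with the interpolation range $0<s'<s$ announced in the outline of this section.
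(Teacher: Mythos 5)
Your argument is correct. The paper reaches the same conclusion by a closely related but not identical route: it first returns to the $L^2$-difference inequality \eqref{nule1} and uses the viscous dissipation term $\nu\|\nabla(\mathbf u^{R}-\mathbf u^{R'})\|^2_{\L^2}$ appearing there, together with the bound $Y\leq C(\tilde T,M)R^{-\epsilon}$, to show that $\nabla\mathbf u^{R}$ is Cauchy in $L^{2}([0,\tilde T];\L^{2})$; only then does it interpolate via Lemma \ref{iss} against the uniform $L^{2}([0,\tilde T];\H^{s})$ bound on $\nabla\mathbf u^{R}$ to upgrade this to convergence of $\nabla\mathbf u^{R}$ in $L^{2}([0,\tilde T];\H^{s'})$, hence of $\Delta\mathbf u^{R}$ in $L^{2}([0,\tilde T];\H^{s'-1})$. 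You instead interpolate $\mathbf u^{R}-\mathbf u^{R'}$ directly between $\L^{2}$ and $\H^{s+1}$, paying for it with a H\"older-in-time step; the two ingredients you invoke --- the $L^{\infty}_t\L^2$ Cauchy property from Proposition \ref{Ca} and the uniform $L^2_t\H^{s+1}$ bound assembled from Proposition \ref{fin} via the exact identity $\|\mathbf u^{R}\|^2_{\H^{s+1}}=\|\mathbf u^{R}\|^2_{\H^{s}}+\|\nabla\mathbf u^{R}\|^2_{\H^{s}}$ --- are precisely what is needed, and your exponent bookkeeping ($\theta=(s'+1)/(s+1)$, H\"older with exponents $1/\theta$ and $1/(1-\theta)$) is correct. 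Your version has the small advantage of never revisiting the difference energy estimate, at the cost of being slightly less elementary in the time variable; both arguments ultimately rest on the viscosity through the $L^2_t\H^{s+1}$ (respectively $L^2_t\L^2$ gradient) control. Note finally that your proof, like the paper's own, establishes the claim only for $n/2<s'<s$, which is the range actually used downstream, even though the proposition as stated omits the upper restriction $s'<s$.
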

\begin{proof}
From the above Proposition it is clear that  $\mathbf u^{\textit R} \to \mathbf u$ strongly in $L^{\infty}\left([0, \tilde T]; \L^2(\mathbb R^n)\right)$ and $\mathbf\theta^{\textit R} \to \mathbf\theta$ strongly in $L^{\infty}\left([0, \tilde T]; L^2(\mathbb R^n)\right)$ as $R \to \infty$.\\
\noindent
Observe that from \eqref{nule1} 
\[ \nu \| \nabla (\mathbf u^{\textit R} - \mathbf u^{\textit R'}) \|^2_{\L^2} \leq CMY^2 + \frac{CM}{R^\epsilon}Y.\]
From \eqref{ytr}, we also note that $Y$ is bounded by $\frac{C(\tilde T, M)}{R^\epsilon}$. So taking integration from $0$ to $\tilde T$ and then tending $R \to \infty$, we can find that $\nabla \mathbf u^{\textit R}$ is Cauchy in $L^{2}\left([0, \tilde T]; \L^2(\mathbb R^n)\right)$ and so $\nabla \mathbf u^{\textit R} \to \nabla \mathbf u$ in $L^{2}\left([0, \tilde T]; \L^2(\mathbb R^n)\right)$.\\
\noindent
Now using Lemma \ref{iss}, for $s' < s,$
\begin{align*}
\sup_{t \in [0, \tilde T]} \|\mathbf u^{\textit R} - \mathbf u\|_{\H^{s'}} &\leq C \sup_{t \in [0, \tilde T]} \left(\|\mathbf u^{\textit R} - \mathbf u\|_{\L^{2}}^{1-s'/s} \|\mathbf u^{\textit R} - \mathbf u\|_{\H^{s}}^{s'/s}\right)
\\ &\leq C \left( \sup_{t \in [0, \tilde T]} \|\mathbf u^{\textit R} - \mathbf u\|_{\L^{2}}\right)^{1-s'/s} \left( \sup_{t \in [0, \tilde T]}  \|\mathbf u^{\textit R} - \mathbf u\|_{\H^{s}}\right)^{s'/s}.
\end{align*}
From Propositions \ref{fin} and \ref{Ca}, we obtain
\begin{align*}
\sup_{t \in [0, \tilde T]} \|\mathbf u^{\textit R} - \mathbf u\|_{\H^{s'}} \leq M  \left( \sup_{t \in [0, \tilde T]} \|\mathbf u^{\textit R} - \mathbf u\|_{\L^{2}}\right)^{1-s'/s} \to 0 \quad \quad as \quad R \to \infty,
\end{align*}
which implies
\begin{align*}\label{urcu}
\mathbf u^{\textit R} \to \mathbf u\left.\right.\left.\right. \textit{strongly in} \left.\right.\left.\right. L^{\infty}\left([0, \tilde T]; \H^{s'}(\mathbb R^n)\right)\ \textit{for any}\  s'<s.
\end{align*}
Similarly, one can show
\begin{align*}
 &\mathbf \theta^{\textit R} \to  \mathbf \theta \quad \textit{strongly in} \quad L^{\infty}\left([0, \tilde T]; H^{s'}(\mathbb R^n)\right)  \textit{for any}\  s'<s,\\
 &\nabla \mathbf u^{\textit R} \to \nabla \mathbf u \quad  \textit{strongly in} \quad L^2 \left([0, \tilde T]; \H^{s'}(\mathbb R^n)\right)  \textit{for any}\  s'<s,
\end{align*}
and thus we obtain, $\Delta \mathbf u^{\textit R} \to \Delta \mathbf u$ strongly in $L^2 \left([0, \tilde T]; \H^{{s'}-1}(\mathbb R^n)\right)$ for any $s' < s.$
\end{proof} 

\begin{proposition}
For any $s' > n/2$, the non-linear terms ${\mathcal S}_{\textit R} \left[ (\mathbf u^{\textit R} \cdot \nabla)\mathbf u^{\textit R} \right] \to (\mathbf u \cdot \nabla)\mathbf u$ and ${\mathcal S}_{\textit R} \left[ (\mathbf u^{\textit R} \cdot \nabla)\mathbf \theta^{\textit R} \right] \to (\mathbf u \cdot \nabla)\mathbf \theta$ strongly in $L^{\infty}\left([0, \tilde T]; H^{s'-1}(\mathbb R^n)\right)$ as $R \to \infty$.
\end{proposition}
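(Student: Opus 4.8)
The plan is to prove the convergence for each fixed $s'$ with $n/2 < s' < s$, which is the relevant range since the strong limits of Propositions \ref{Ca} and \ref{La} live in $H^{s'}$ only for $s'<s$; the claim for every $s'>n/2$ then follows at once by monotonicity of the Sobolev norms. I would treat the two nonlinearities in parallel, writing each difference as a sum of a truncation error and a bilinear error. For the velocity term,
\[ {\mathcal S}_{\textit R}\left[(\mathbf u^{\textit R}\cdot\nabla)\mathbf u^{\textit R}\right] - (\mathbf u\cdot\nabla)\mathbf u = {\mathcal S}_{\textit R}\left[(\mathbf u^{\textit R}\cdot\nabla)\mathbf u^{\textit R} - (\mathbf u\cdot\nabla)\mathbf u\right] + ({\mathcal S}_{\textit R} - I)\left[(\mathbf u\cdot\nabla)\mathbf u\right], \]
and identically, with $\mathbf\theta$ in the differentiated slot, for the temperature term.

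First I would dispose of the truncation error $({\mathcal S}_{\textit R}-I)[(\mathbf u\cdot\nabla)\mathbf u]$. By Remark \ref{div}, the limiting bilinear term $(\mathbf u\cdot\nabla)\mathbf u$ lies in $H^{s-1}$ with $\|(\mathbf u\cdot\nabla)\mathbf u\|_{H^{s-1}} \leq C\|\mathbf u\|_{\H^s}^2$, and the latter is bounded on $[0,\tilde T]$ by the uniform constant $M$ inherited from Proposition \ref{fin}. Applying property (2) of the Fourier truncation operator in the base space $H^{s'-1}$ with exponent $k = s - s' > 0$ gives
\[ \|({\mathcal S}_{\textit R}-I)[(\mathbf u\cdot\nabla)\mathbf u]\|_{H^{s'-1}} \leq \frac{C}{R^{\,s-s'}}\|(\mathbf u\cdot\nabla)\mathbf u\|_{H^{s-1}} \leq \frac{C M^2}{R^{\,s-s'}}, \]
uniformly in $t$, so this piece tends to zero in $L^\infty([0,\tilde T];H^{s'-1})$. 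The same estimate, with $\mathbf u$ replaced by $\mathbf\theta$ in the differentiated slot and Remark \ref{div} invoked for $f=\mathbf u\in\H^s$, $g=\mathbf\theta\in H^s$, handles the truncation error for the temperature nonlinearity.

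For the bilinear error I would first use boundedness of ${\mathcal S}_{\textit R}$ on $H^{s'-1}$ (property (1)) to drop the truncation, and then telescope:
\[ (\mathbf u^{\textit R}\cdot\nabla)\mathbf u^{\textit R} - (\mathbf u\cdot\nabla)\mathbf u = \left((\mathbf u^{\textit R}-\mathbf u)\cdot\nabla\right)\mathbf u^{\textit R} + (\mathbf u\cdot\nabla)(\mathbf u^{\textit R}-\mathbf u). \]
Since both $\mathbf u^{\textit R}-\mathbf u$ and $\mathbf u$ are divergence free and $s'>n/2$, Remark \ref{div} bounds the two pieces in $H^{s'-1}$ by $C\|\mathbf u^{\textit R}-\mathbf u\|_{\H^{s'}}\|\mathbf u^{\textit R}\|_{\H^{s'}}$ and $C\|\mathbf u\|_{\H^{s'}}\|\mathbf u^{\textit R}-\mathbf u\|_{\H^{s'}}$, respectively. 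By Proposition \ref{La}, $\mathbf u^{\textit R}\to\mathbf u$ in $L^\infty([0,\tilde T];\H^{s'})$, while $\|\mathbf u^{\textit R}\|_{\H^{s'}}$ and $\|\mathbf u\|_{\H^{s'}}$ stay bounded by $M$, so the bilinear error also vanishes uniformly in $t$. For the temperature nonlinearity the analogous split $((\mathbf u^{\textit R}-\mathbf u)\cdot\nabla)\mathbf\theta^{\textit R} + (\mathbf u\cdot\nabla)(\mathbf\theta^{\textit R}-\mathbf\theta)$ is estimated by $C\|\mathbf u^{\textit R}-\mathbf u\|_{\H^{s'}}\|\mathbf\theta^{\textit R}\|_{H^{s'}}$ and $C\|\mathbf u\|_{\H^{s'}}\|\mathbf\theta^{\textit R}-\mathbf\theta\|_{H^{s'}}$, both of which tend to zero by the strong convergences $\mathbf u^{\textit R}\to\mathbf u$ and $\mathbf\theta^{\textit R}\to\mathbf\theta$ in $L^\infty([0,\tilde T];H^{s'})$.

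There is no serious analytic obstacle: the whole argument rests on Remark \ref{div}, which supplies the product estimate and exploits the divergence-free structure to limit the loss to a single derivative, together with the two mapping properties of ${\mathcal S}_{\textit R}$. The one point requiring care is the derivative budget: the truncation error must be measured one notch below the available regularity so that the gain $R^{-(s-s')}$ is genuinely positive, which is precisely why the statement is restricted to $s'<s$, with the case of general $s'>n/2$ recovered afterwards.
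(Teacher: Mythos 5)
Your argument is correct on the range $n/2<s'<s$ and follows the same core route as the paper: telescope the bilinear difference, use the boundedness of ${\mathcal S}_{R}$ on $H^{s'-1}$ (property (1)), and apply Remark \ref{div} together with the uniform bounds of Proposition \ref{fin} and the strong convergences of Propositions \ref{Ca} and \ref{La}. You are in fact more careful than the paper on one point: the paper's first triangle inequality bounds ${\mathcal S}_{R}\left[(\mathbf u^{R}\cdot\nabla)\mathbf u^{R}\right]-(\mathbf u\cdot\nabla)\mathbf u$ directly by the two telescoped pieces ${\mathcal S}_{R}\left[((\mathbf u^{R}-\mathbf u)\cdot\nabla)\mathbf u^{R}\right]$ and ${\mathcal S}_{R}\left[(\mathbf u\cdot\nabla)(\mathbf u^{R}-\mathbf u)\right]$, which silently discards the truncation error $({\mathcal S}_{R}-I)\left[(\mathbf u\cdot\nabla)\mathbf u\right]$; your explicit treatment of that term via property (2) of the truncation operator, with gain $R^{-(s-s')}$ and the $H^{s-1}$ bound $\|(\mathbf u\cdot\nabla)\mathbf u\|_{H^{s-1}}\leq C\|\mathbf u\|_{\H^s}^2\leq CM^2$, closes that gap cleanly. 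One small caveat: your opening claim that the case of general $s'>n/2$ then ``follows by monotonicity of the Sobolev norms'' is backwards --- monotonicity lets you deduce convergence in $H^{s''-1}$ for \emph{smaller} $s''$ from convergence in $H^{s'-1}$ for larger $s'$, so the range $s'\geq s$ is not recovered this way. This does not damage the substance, since the paper's own proof carries the same implicit restriction to $s'<s$ and only that range is used in passing to the limit in the equations, but the reduction should either be dropped or the statement read as holding for $n/2<s'<s$.
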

\begin{proof}
Using properties of the Fourier truncation and Remark \ref{div}, we get for any $s' > n/2$,
\begin{align*}
 &\sup_{t \in [0, \tilde T]} \|{\mathcal S}_{\textit R} \left[ (\mathbf u^{\textit R} \cdot \nabla)\mathbf u^{\textit R} \right] - (\mathbf u \cdot \nabla)\mathbf u\|_{\H^{s'-1}}
\\ &\ \ \ \ \leq \sup_{t \in [0, \tilde T]}\left( \|{\mathcal S}_{\textit R} \left[ (\mathbf u^{\textit R} - \mathbf u) \cdot \nabla)\mathbf u^{\textit R} \right] \|_{\H^{s'-1}} + \|{\mathcal S}_{\textit R} \left[ (\mathbf u \cdot \nabla)(\mathbf u^{\textit R} - \mathbf u) \right]\|_{\H^{s'-1}} \right)
\\ &\ \ \ \ \leq \sup_{t \in [0, \tilde T]}\left( C \|\left[ (\mathbf u^{\textit R} - \mathbf u) \cdot \nabla)\mathbf u^{\textit R} \right] \|_{\H^{s'-1}} + C \|\left[ (\mathbf u \cdot \nabla)(\mathbf u^{\textit R} - \mathbf u) \right]\|_{\H^{s'-1}} \right)
\\ &\ \ \ \ \leq \sup_{t \in [0, \tilde T]}\left( C \|\mathbf u^{\textit R} - \mathbf u \|_{\H^{s'}} \| \mathbf u^{\textit R}\|_{\H^{s'}} + C \| \mathbf u \|_{\H^{s'}} \| \mathbf u^{\textit R} - \mathbf u\|_{\H^{s'}}\right)
\end{align*}
Clearly, from the Propositions \ref{fin} and  \ref{Ca}, the right hand side tends to 0 as $R \to \infty$. Similarly, one can prove ${\mathcal S}_{\textit R} \left[ (\mathbf u^{\textit R} \cdot \nabla)\mathbf \theta^{\textit R} \right] \to (\mathbf u \cdot \nabla)\mathbf \theta$ strongly in $L^{\infty}\left([0, \tilde T]; H^{s'-1}(\mathbb R^n)\right)$ as $R \to \infty$
\end{proof}
\noindent
Next we will show the convergences of the time derivatives.
\begin{proposition}\label{delur}
For any $s > n/2$, $\frac{\partial \mathbf u^{\textit R}}{\partial t} \to \frac{\partial \mathbf u}{\partial t}$ and $ \frac{\partial \mathbf \theta^{\textit R}}{\partial t} \to  \frac{\partial \mathbf \theta}{\partial t}$ strongly in  $L^{2}\left([0, \tilde T]; \H^{s-1}(\mathbb R^n)\right)$ and $L^{2}\left([0, \tilde T]; H^{s-1}(\mathbb R^n)\right)$ respectively as $R \to \infty$.
\end{proposition}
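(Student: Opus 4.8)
The plan is to read the time derivatives off directly from the evolution equations \eqref{tr1} and \eqref{tr2}: each of $\partial_t\mathbf u^{\textit R}$ and $\partial_t\mathbf\theta^{\textit R}$ is a sum of the nonlinear, viscous, pressure and forcing terms, so it suffices to prove convergence of each such term and then add. The temperature is the easy case. From \eqref{tr2} and the limit equation $\partial_t\mathbf\theta=-(\mathbf u\cdot\nabla)\mathbf\theta$ we have
\[\frac{\partial\mathbf\theta^{\textit R}}{\partial t}-\frac{\partial\mathbf\theta}{\partial t}=-\Big({\mathcal S}_{\textit R}\big[(\mathbf u^{\textit R}\cdot\nabla)\mathbf\theta^{\textit R}\big]-(\mathbf u\cdot\nabla)\mathbf\theta\Big),\]
whose right-hand side tends to $0$ in $L^{\infty}([0,\tilde T];H^{s-1})$, and hence in $L^{2}([0,\tilde T];H^{s-1})$ over the bounded interval, by the preceding Proposition on the convergence of the nonlinear terms.

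For the velocity the only genuinely new feature is the pressure, which is not governed by an evolution equation of its own; I would eliminate it with the Leray--Helmholtz projection $\mathbb P$ onto divergence-free fields. Since $\mathbf u^{\textit R}$ is divergence free, so are $\partial_t\mathbf u^{\textit R}$ and $\Delta\mathbf u^{\textit R}$; and since $\mathbb P$ kills $\nabla p^{\textit R}$ and, being a Fourier multiplier, commutes with $\Delta$ and with ${\mathcal S}_{\textit R}$, applying $\mathbb P$ to \eqref{tr1} gives
\[\frac{\partial\mathbf u^{\textit R}}{\partial t}=-{\mathcal S}_{\textit R}\mathbb P\big[(\mathbf u^{\textit R}\cdot\nabla)\mathbf u^{\textit R}\big]+\nu\Delta\mathbf u^{\textit R}+\mathbb P\big(\theta^{\textit R}f^{\textit R}\big),\]
with the analogous identity $\partial_t\mathbf u=-\mathbb P[(\mathbf u\cdot\nabla)\mathbf u]+\nu\Delta\mathbf u+\mathbb P(\theta f)$ in the limit. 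As $\mathbb P$ is bounded on every $H^{\sigma}$, the difference of the two right-hand sides is controlled term by term by quantities already shown to vanish: the nonlinear term by the preceding Proposition (in $L^{\infty}([0,\tilde T];H^{s-1})$) and the viscous term by Proposition \ref{La} (in $L^{2}([0,\tilde T];\H^{s-1})$). It is precisely this viscous contribution, whose convergence is available only in $L^{2}$ in time, that forces the conclusion to be cast in $L^{2}([0,\tilde T];\cdot)$ rather than $L^{\infty}$.

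It remains to treat the buoyancy forcing $\mathbb P(\theta^{\textit R}f^{\textit R})$. Splitting $\theta^{\textit R}f^{\textit R}-\theta f=(\theta^{\textit R}-\theta)f^{\textit R}+\theta(f^{\textit R}-f)$ and invoking the algebra property of $H^{s}$ from Remark \ref{prodhs} together with the boundedness of $\mathbb P$, I would estimate
\[\big\|\mathbb P(\theta^{\textit R}f^{\textit R})-\mathbb P(\theta f)\big\|_{H^{s}}\le C\|\theta^{\textit R}-\theta\|_{H^{s}}\|f^{\textit R}\|_{H^{s}}+C\|\theta\|_{H^{s}}\|f^{\textit R}-f\|_{H^{s}}.\]
Here $\theta^{\textit R}\to\theta$ strongly in $H^{s}$ by Propositions \ref{Ca} and \ref{La}, while $f^{\textit R}={\mathcal S}_{\textit R}f\to f$ in $H^{s}$ by property (2) of the Fourier truncation operator in subsection \ref{FTO}, the uniform bound $\|f^{\textit R}\|_{H^{s}}\le C\|f\|_{H^{s}}$ from property (1) keeping the product under control; a fortiori the forcing converges in $H^{s-1}$. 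Assembling the three groups of estimates and integrating over $[0,\tilde T]$ then yields the asserted $L^{2}$-in-time convergence of $\partial_t\mathbf u^{\textit R}$, the temperature case being already complete.

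The main obstacle is conceptual rather than computational: the pressure, lacking its own evolution law, cannot be reached by the direct convergences of the earlier propositions, and the clean remedy is the projection $\mathbb P$, which simultaneously annihilates $\nabla p^{\textit R}$ and commutes with $\Delta$ and ${\mathcal S}_{\textit R}$, thereby reducing the whole question to the already-established convergences of the nonlinear and viscous terms. The only real bookkeeping is the loss of exactly one derivative and the downgrade from $L^{\infty}$ to $L^{2}$ in time, both inherited from the viscous term through Proposition \ref{La}, which is why the statement is phrased in $L^{2}([0,\tilde T];\H^{s-1})$.
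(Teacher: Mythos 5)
Your argument is correct and follows essentially the same route as the paper: both read $\partial_t \mathbf u^{R}$ and $\partial_t \mathbf \theta^{R}$ off the truncated equations and pass to the limit term by term, using the convergences already established for the nonlinear, viscous and forcing terms. Your version is in fact slightly cleaner, since you make explicit the Leray projection that eliminates the pressure (which the paper drops silently) and you bypass the paper's intermediate Banach--Alaoglu weak-$\ast$ extraction, which is redundant once each term on the right-hand side is known to converge strongly.
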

\begin{proof}
Taking $H^{s-1}$-norm on both sides of the truncated equations \eqref{tr1} and \eqref{tr2} we get,
\[\left\| \frac{\partial \mathbf u^{\textit R}}{\partial t} \right\|_{\H^{s-1}} \leq \| \theta^{\textit R} f^{\textit R}\|_{H^{s-1}} + \|{\mathcal S}_{\textit R} \left[ (\mathbf u^{\textit R} \cdot \nabla)\mathbf u^{\textit R} \right]\|_{\H^{s-1}} + \nu \| \Delta \mathbf u^{\textit R}\|_{\H^{s-1}},\]
\[ \left\| \frac{\partial \mathbf \theta^{\textit R}}{\partial t}\right\|_{H^{s-1}} = \| {\mathcal S}_{\textit R} \left[ (\mathbf u^{\textit R} \cdot \nabla)\mathbf \theta^{\textit R} \right]\|_{H^{s-1}}.\]
Using properties of the Fourier truncation operator and Remarks \ref{prodhs} and \ref{div}, we have for $s > n/2$,
\begin{align}\label{delut1}
&\left\| \frac{\partial \mathbf u^{\textit R}}{\partial t} \right\|_{\H^{s-1}} + \left\| \frac{\partial \mathbf \theta^{\textit R}}{\partial t}\right\|_{H^{s-1}}\nonumber\\
&\ \ \ \ \leq C \|\mathbf \theta^{\textit R}\|_{H^s}\|f^{\textit R}\|_{H^s} + C \|\mathbf u^{\textit R}\|^2_{\H^s} + C \| \Delta \mathbf u^{\textit R}\|_{\H^{s-1}} + C \|\mathbf u^{\textit R}\|_{\H^s}\|\mathbf \theta^{\textit R}\|_{H^s}
\\ &\ \ \ \ \leq C \sup_{t \in [0, \tilde T]}\|\mathbf \theta^{\textit R}\|_{H^s}\sup_{t \in [0, \tilde T]}\|f^{\textit R}\|_{H^s} + C \sup_{t \in [0, \tilde T]}\|\mathbf u^{\textit R}\|^2_{\H^s}  + C \| \Delta \mathbf u^{\textit R}\|_{\H^{s-1}}\nonumber\\
&\ \ \ \ \quad + C \sup_{t \in [0, \tilde T]}\|\mathbf u^{\textit R}\|_{\H^s}\cdot \sup_{t \in [0, \tilde T]}\|\mathbf \theta^{\textit R}\|_{H^s}\nonumber
\end{align}
Using Proposition $\ref{fin}$ and dropping the second term of left hand side, we obtain
\begin{align*}
\left\| \frac{\partial \mathbf u^{\textit R}}{\partial t} \right\|_{\H^{s-1}} \leq C + C \| \Delta \mathbf u^{\textit R}\|_{\H^{s-1}} 
\end{align*}
Finally squaring both sides, using Young's inequality,  and integrating in $t \in [0, \tilde T]$, we have, after recalling $ \Delta \mathbf u^{\textit R} \in  L^{2}\left([0, \tilde T]; \H^{s-1}(\mathbb R^n)\right)$, 
\[ \int_0^{\tilde T} \left\| \frac{\partial \mathbf u^{\textit R}}{\partial t} \right\|^2_{\H^{s-1}} \leq C(\tilde T) .\]
Using Banach-Alaoglu weak$-^{\ast}$ compactness theorem (see Robinson \cite{Ro}), we can extract a subsequence $R_m \to +\infty$ such that
\begin{align}\label{drm}
\frac{\partial \mathbf u^{\textit R_m}}{\partial t} \to \frac{\partial \mathbf u}{\partial t}\left.\right.\left.\right. \textrm{weakly$-^{\ast}$ in} \left.\right. L^{2}\left([0, \tilde T]; \H^{s-1}(\mathbb R^n)\right).
\end{align}
Similar argument works for $\frac{\partial \mathbf \theta^{\textit R}}{\partial t}$ as well.\\
\noindent
Note that $\|\mathbf u^{\textit R}\|_{\H^s}\|\mathbf \theta^{\textit R}\|_{H^s} \to \|\mathbf u\|_{\H^s}\|\mathbf \theta\|_{H^s}$ holds due to the strong convergences of $(\mathbf u^{\textit R}, \mathbf \theta^{\textit R})$ to $(\mathbf u, \mathbf \theta)$ in $L^{\infty}\left([0, \tilde T]; \L^2(\mathbb R^n)\right) \times L^{\infty}\left([0, \tilde T]; L^2(\mathbb R^n)\right)$. Hence all the terms on the right hand side of $\eqref{delut1}$ converge strongly (from Propositions \ref{Ca} and \ref{La}), 
we conclude that the convergences of the time derivatives are strong.
\end{proof}

\begin{proposition}\label{mp7}
For $s > n/2,$ $(\mathbf u, \mathbf \theta)$ lie in the space $L^{\infty}\left([0, \tilde T]; \H^{s}(\mathbb R^n)\right) \cap L^{2}\left([0, \tilde T]; \H^{s+1}(\mathbb R^n)\right) \times L^{\infty}\left([0, \tilde T]; H^{s}(\mathbb R^n)\right).$
\end{proposition}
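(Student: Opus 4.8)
The plan is to upgrade the strong lower-order convergence already established in Propositions \ref{Ca} and \ref{La} to membership in the top-order spaces by combining the uniform bounds of Proposition \ref{fin} with a weak-$*$ compactness argument and lower semicontinuity of the norm. First I would recast the uniform estimates of Proposition \ref{fin} in the language of Bochner spaces. The bounds $\sup_{t\in[0,\tilde T]}\|\mathbf u^{\textit R}(t)\|_{\H^s}\leq M$ and $\sup_{t\in[0,\tilde T]}\|\mathbf \theta^{\textit R}(t)\|_{H^s}\leq M$ say precisely that $\{\mathbf u^{\textit R}\}$ and $\{\mathbf\theta^{\textit R}\}$ are bounded in $L^{\infty}([0,\tilde T];\H^s)$ and $L^{\infty}([0,\tilde T];H^s)$ respectively, uniformly in $R$. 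For the $H^{s+1}$ claim on $\mathbf u$, I would convert the gradient bound $\int_0^{\tilde T}\|\nabla\mathbf u^{\textit R}\|^2_{\H^s}\,dt\leq C$ into a bound in $L^2([0,\tilde T];\H^{s+1})$: since $\|\nabla\mathbf u^{\textit R}\|_{\dH^s}=\|\mathbf u^{\textit R}\|_{\dH^{s+1}}$ by Remark \ref{gradhs}, while the $L^\infty$-in-time $H^s$-bound also controls $\|\mathbf u^{\textit R}\|_{\L^2}$ uniformly, one obtains $\int_0^{\tilde T}\|\mathbf u^{\textit R}\|^2_{\H^{s+1}}\,dt\leq C'$ uniformly in $R$.

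Next, since $L^{\infty}([0,\tilde T];\H^s)$ is the dual of $L^1([0,\tilde T];\H^{-s})$ and $L^2([0,\tilde T];\H^{s+1})$ is a Hilbert space, hence reflexive, the Banach-Alaoglu theorem (see \cite{Ro}) lets me extract a subsequence along which $\mathbf u^{\textit R}\to\tilde{\mathbf u}$ weakly-$*$ in $L^{\infty}([0,\tilde T];\H^s)$ and weakly in $L^2([0,\tilde T];\H^{s+1})$, and $\mathbf\theta^{\textit R}\to\tilde{\mathbf\theta}$ weakly-$*$ in $L^{\infty}([0,\tilde T];H^s)$. The key identification step is to verify that these weak limits coincide with the strong limits $(\mathbf u,\mathbf\theta)$ of Proposition \ref{La}: since $\mathbf u^{\textit R}\to\mathbf u$ strongly in $L^{\infty}([0,\tilde T];\H^{s'})$ for every $s'<s$, this convergence holds in particular in the sense of distributions, and by uniqueness of limits in $\mathcal{D}'([0,\tilde T]\times\mathbb R^n)$ one concludes $\tilde{\mathbf u}=\mathbf u$ and $\tilde{\mathbf\theta}=\mathbf\theta$.

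Finally, I would invoke the weak-$*$ lower semicontinuity of the norm. The weak-$*$ convergence gives $\|\mathbf u\|_{L^{\infty}([0,\tilde T];\H^s)}\leq\liminf_R\|\mathbf u^{\textit R}\|_{L^{\infty}([0,\tilde T];\H^s)}\leq M$ and likewise $\|\mathbf\theta\|_{L^{\infty}([0,\tilde T];H^s)}\leq M$, while weak lower semicontinuity of the $L^2([0,\tilde T];\H^{s+1})$-norm yields $\|\mathbf u\|_{L^2([0,\tilde T];\H^{s+1})}\leq C'$. Together these place $(\mathbf u,\mathbf\theta)$ in the asserted space $L^{\infty}([0,\tilde T];\H^s)\cap L^2([0,\tilde T];\H^{s+1})\times L^{\infty}([0,\tilde T];H^s)$.

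I expect the main obstacle to be the identification of the weak-$*$ limits with the strong lower-order limits: because $L^{\infty}([0,\tilde T];\H^s)$ is non-reflexive, only weak-$*$ (not weak) compactness is available, so one cannot directly test against arbitrary $L^\infty$-dual elements. Resolving this cleanly is exactly what the duality pairing against test functions in $L^1([0,\tilde T];\H^{-s})$, together with the uniqueness of distributional limits inherited from the strong $\H^{s'}$-convergence, is designed to handle. The remaining estimates are routine once the correct functional-analytic framework is in place.
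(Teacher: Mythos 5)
Your proposal is correct and follows essentially the same route as the paper: uniform bounds from Proposition \ref{fin}, Banach--Alaoglu weak-$*$ compactness, and identification of the limits. The paper states this more tersely (it does not spell out the identification of the weak-$*$ limits with the strong lower-order limits or the lower semicontinuity of the norms), so your version simply makes explicit the steps the paper leaves implicit.
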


\begin{proof}
By Banach-Alaoglu weak$-^{\ast}$ compactness theorem (see \cite{Ro} or \cite{Yo}), the uniform bounds in Proposition $\ref{fin}$ guarantee the existence of a subsequence such that
\begin{align*}
&\mathbf u^{\textit R_m} \to \mathbf {u}\left.\right. \left.\right.weakly-^{\ast} \left.\right. in \left.\right.\left.\right. L^{\infty}\left([0, \tilde T]; \H^{s}(\mathbb R^n)\right),
\\ &\mathbf \theta^{\textit R_m} \to \mathbf \theta\left.\right. \left.\right.weakly-^{\ast} \left.\right. in \left.\right.\left.\right. L^{\infty}\left([0, \tilde T]; H^{s}(\mathbb R^n)\right),
\end{align*}
and
\begin{align*}
\nabla \mathbf u^{\textit R_m} \to \nabla \mathbf u \left.\right.\left.\right.weakly\left.\right. in\left.\right. \left.\right. L^{2}\left([0, \tilde T]; \H^{s}(\mathbb R^n)\right),
\end{align*}
which guarantees that the limit satisfies
\begin{align*}\label{ul2}
 \mathbf u \in  L^{\infty}\left([0, \tilde T]; \H^{s}(\mathbb R^n)\right) \left. \cap \right.  L^{2}\left([0, \tilde T]; \H^{s+1}(\mathbb R^n)\right),
\end{align*}
and
\begin{align*}
\mathbf \theta \in L^{\infty}\left([0, \tilde T]; H^{s}(\mathbb R^n)\right).
\end{align*}
\end{proof}

\begin{proposition}\label{p8}
Let $s > n/2$ and $\mathbf u_{0} \in  \H^{s}(\mathbb R^n)$ and $\mathbf \theta_{0} \in  H^{s}(\mathbb R^n)$. Let the solutions $(\mathbf u, \mathbf\theta)$ of the system $B_{\nu, 0}$ have the regularity 
\[ \mathbf u \in  L^{\infty}\left([0, \tilde T]; \H^{s}(\mathbb R^n)\right) \left. \cap \right.  L^{2}\left([0, \tilde T]; \H^{s+1}(\mathbb R^n)\right), \mathbf \theta \in L^{\infty}\left([0, \tilde T]; H^{s}(\mathbb R^n)\right). \]
 Then the solutions $(\mathbf u, \mathbf \theta)$ of the system $B_{\nu, 0}$ are unique in $[0, \tilde T].$  
\end{proposition}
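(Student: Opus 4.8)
The plan is to run an $\L^2$ energy estimate on the difference of two solutions. Suppose $(\mathbf u_1, \mathbf\theta_1)$ and $(\mathbf u_2, \mathbf\theta_2)$ are two solutions in the stated regularity class sharing the same initial data, and set $\mathbf w = \mathbf u_1 - \mathbf u_2$, $\eta = \mathbf\theta_1 - \mathbf\theta_2$, $q = p_1 - p_2$. Subtracting the two copies of $B_{\nu,0}$ and rewriting the quadratic terms as $(\mathbf u_1\cdot\nabla)\mathbf u_1 - (\mathbf u_2\cdot\nabla)\mathbf u_2 = (\mathbf w\cdot\nabla)\mathbf u_1 + (\mathbf u_2\cdot\nabla)\mathbf w$, and similarly for the temperature, the difference system reads
\begin{align*}
\partial_t \mathbf w + (\mathbf w\cdot\nabla)\mathbf u_1 + (\mathbf u_2\cdot\nabla)\mathbf w - \nu\Delta\mathbf w + \nabla q &= \eta f,\\
\partial_t \eta + (\mathbf w\cdot\nabla)\mathbf\theta_1 + (\mathbf u_2\cdot\nabla)\eta &= 0,
\end{align*}
with $\nabla\cdot\mathbf w = 0$ and $(\mathbf w, \eta)|_{t=0} = 0$. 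I would then take the $\L^2$-inner product of the first equation with $\mathbf w$ and of the second with $\eta$, the time differentiations being justified by the regularity $\mathbf u_i \in L^2([0,\tilde T];\H^{s+1})$ and $\mathbf\theta_i \in L^\infty([0,\tilde T]; H^{s})$ from Proposition \ref{mp7}.

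Several terms drop out. The pressure contributes $(\nabla q, \mathbf w) = -(q, \nabla\cdot\mathbf w) = 0$, and the two transport terms by the divergence-free field $\mathbf u_2$ vanish after integration by parts, $((\mathbf u_2\cdot\nabla)\mathbf w, \mathbf w) = 0$ and $((\mathbf u_2\cdot\nabla)\eta, \eta) = 0$. The viscous term produces $+\nu\|\nabla\mathbf w\|^2_{\L^2}$ on the left, which is the quantity I intend to exploit. Of the surviving right-hand terms, two are routine: $|((\mathbf w\cdot\nabla)\mathbf u_1, \mathbf w)| \le \|\nabla\mathbf u_1\|_{L^\infty}\|\mathbf w\|^2_{\L^2}$ with $\|\nabla\mathbf u_1\|_{L^\infty}\le C\|\mathbf u_1\|_{H^{s+1}}$ lying in $L^2(0,\tilde T)$, and the buoyancy term $|(\eta f, \mathbf w)| \le \|f\|_{L^\infty}(\|\mathbf w\|^2_{\L^2} + \|\eta\|^2_{L^2})$ since $f \in L^\infty([0,\tilde T];H^s)$.

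The delicate term is the cross term $((\mathbf w\cdot\nabla)\mathbf\theta_1, \eta)$ from the temperature equation, and this is where I expect the main difficulty. Because $\mathbf\theta_1$ lies only in $H^s$ with $s>n/2$, the gradient $\nabla\mathbf\theta_1$ sits in $H^{s-1}$ and carries \emph{no} $L^\infty$-bound, so one cannot simply extract $\|\nabla\mathbf\theta_1\|_{L^\infty}$. The idea is to spend the viscous dissipation: by Remark \ref{r23} (which gives the same bound in both dimensions), $\|\mathbf w\cdot\nabla\mathbf\theta_1\|_{L^2} \le C\|\mathbf w\|_{\H^1}\|\nabla\mathbf\theta_1\|_{H^{s-1}} \le C\|\mathbf w\|_{\H^1}\|\mathbf\theta_1\|_{H^s}$, whence
\[ |((\mathbf w\cdot\nabla)\mathbf\theta_1, \eta)| \le C\|\mathbf\theta_1\|_{H^s}\big(\|\mathbf w\|_{\L^2} + \|\nabla\mathbf w\|_{\L^2}\big)\|\eta\|_{L^2}. \]
Young's inequality (Lemma \ref{Ye}) then absorbs the gradient contribution into $\tfrac{\nu}{4}\|\nabla\mathbf w\|^2_{\L^2}$ at the cost of a factor $\tfrac{C}{\nu}\|\mathbf\theta_1\|^2_{H^s}\|\eta\|^2_{L^2}$, while the rest is controlled by $C\|\mathbf\theta_1\|_{H^s}(\|\mathbf w\|^2_{\L^2} + \|\eta\|^2_{L^2})$.

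Adding the two inner products and moving the absorbed gradient term to the left, I arrive at
\[ \frac{d}{dt}\big(\|\mathbf w\|^2_{\L^2} + \|\eta\|^2_{L^2}\big) \le \phi(t)\big(\|\mathbf w\|^2_{\L^2} + \|\eta\|^2_{L^2}\big), \]
where $\phi(t) = C\big(1 + \tfrac{1}{\nu}\big)\big(\|\mathbf u_1\|_{H^{s+1}} + \|\mathbf\theta_1\|^2_{H^s} + \|f\|_{L^\infty} + 1\big)$ belongs to $L^1(0,\tilde T)$ by Proposition \ref{mp7}. Since the initial difference vanishes, Gronwall's lemma forces $\mathbf w \equiv 0$ and $\eta \equiv 0$ on $[0,\tilde T]$, which is the desired uniqueness. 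The essential point — and the reason the argument works at the low regularity $s>n/2$ only when $\nu>0$ — is that the positive viscous term $\nu\|\nabla\mathbf w\|^2_{\L^2}$ is exactly what tames the cross term $((\mathbf w\cdot\nabla)\mathbf\theta_1, \eta)$; in the absence of viscosity one is forced to raise the regularity of $\mathbf\theta_0$, which is precisely why the inviscid systems $B_{0,\kappa}$ and $B_{0,0}$ are treated at $s>n/2+1$.
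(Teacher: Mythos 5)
Your proof is correct, but it takes a genuinely different route from the paper. The paper disposes of uniqueness in a few lines by recycling Proposition \ref{Ca}: it takes the truncated solutions at two parameters $R' > R$, invokes the bound $\sup_{t\in[0,\tilde T]}\bigl(\|\mathbf u^{R}-\mathbf u^{R'}\|_{\L^2}+\|\mathbf\theta^{R}-\mathbf\theta^{R'}\|_{L^2}\bigr)\leq C/R^{\epsilon}$, and concludes that the limit of the truncation scheme is unique. Strictly read, that argument only shows the approximation procedure has a single limit; it does not directly compare two arbitrary solutions in the stated regularity class. Your argument does exactly that: an $\L^2$ energy estimate on the difference $(\mathbf w,\eta)$ of two genuine solutions, with the transport terms by $\mathbf u_2$ and the pressure killed by incompressibility, the term $((\mathbf w\cdot\nabla)\mathbf u_1,\mathbf w)$ controlled via $\|\nabla\mathbf u_1\|_{L^\infty}\leq C\|\mathbf u_1\|_{H^{s+1}}\in L^2(0,\tilde T)\subset L^1(0,\tilde T)$, and — the key point at the low regularity $s>n/2$ — the cross term $((\mathbf w\cdot\nabla)\mathbf\theta_1,\eta)$ estimated through Remark \ref{r23} as $C\|\mathbf\theta_1\|_{H^s}\|\mathbf w\|_{\H^1}\|\eta\|_{L^2}$ and its gradient part absorbed into $\nu\|\nabla\mathbf w\|^2_{\L^2}$ by Young's inequality, since no $L^\infty$ bound on $\nabla\mathbf\theta_1$ is available. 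This is precisely the mechanism the paper itself uses inside Proposition \ref{Ca} (the estimate of the second term of \eqref{3t}), so your proof is morally the paper's Cauchy estimate rerun with two exact solutions in place of two truncations. What your version buys is a statement of uniqueness that covers any pair of solutions with the asserted regularity, together with an explicit identification of where $\nu>0$ is indispensable and why the inviscid systems require $s>n/2+1$; what the paper's version buys is brevity, at the cost of proving a weaker statement than the one announced.
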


\begin{proof}
The proof of the uniqueness is very similar to the proof of  Proposition \ref{Ca}. Let $( \mathbf u^{\textit R}, \mathbf \theta^{\textit R})$ and $(\mathbf u^{\textit R'},  \mathbf \theta^{\textit R'})$ be two  solutions to the truncated Boussinesq equations  $\eqref{B1}-\eqref{B2}$ for $R' > R.$ Then from \eqref{ytr}, we have,
\[\sup_{t \in [0, \tilde T]}\left(\| \mathbf u^{\textit R}-\mathbf u^{\textit R'}\|_{\L^2} + \| \mathbf \theta^{\textit R} - \mathbf \theta^{\textit R'}\|_{L^2}\right) \leq \frac{C}{R^\epsilon},\]
Now letting $R \to R'$ then letting $R \to +\infty$ we observe,
\[ \mathbf u^{\textit R} \to \mathbf u^{\textit R'} \left.\right.\left.\right.\left.\right. and\left.\right. \left.\right.\left.\right. \mathbf \theta^{\textit R} \to \mathbf \theta^{\textit R'}.\]
This guarantees that the limits $(\mathbf u, \mathbf \theta)$ are unique.
\end{proof}

Now combining all the above results, we will prove the main theorem on local-in-time existence and uniqueness of  strong solutions for the Boussinesq system $B_{\nu, 0}$.
\begin{theorem}\label{mt1}
Let $s > \frac{n}{2},$ $\mathbf u_{0} \in \H^s(\mathbb R^{n})$ and $\mathbf \theta_{0} \in  H^s(\mathbb R^{n})$. Then there exists a unique strong solution $(\mathbf u, \mathbf \theta) \in C([0, \tilde{T}]; \H^{s}(\mathbb R^{n})) \cap L^{2}([0, \tilde{T}]; \H^{s+1}(\mathbb R^{n})) \times C([0, \tilde{T}]; H^{s}(\mathbb R^{n}))$ to the Boussinesq system $B_{\nu, 0}$.
\end{theorem}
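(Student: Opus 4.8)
The plan is to treat this theorem as the synthesis of the preceding propositions (\ref{cut}--\ref{p8}): the truncated solutions $(\mathbf u^{\textit R}, \mathbf \theta^{\textit R})$ from Proposition \ref{cut} have already been shown to converge, so it remains to (i) pass to the limit $R\to\infty$ in the truncated system \eqref{tr1}--\eqref{tr2} and identify the limit $(\mathbf u,\mathbf\theta)$ as a solution of $B_{\nu,0}$, (ii) upgrade the weak-$\ast$ regularity of Proposition \ref{mp7} to continuity in time at the top order, and (iii) invoke Proposition \ref{p8} for uniqueness.

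For step (i) I would fix $s'\in(n/2,s)$ and pass to the limit term by term in \eqref{tr1}--\eqref{tr2}, read as identities in $\H^{s'-1}$ and $H^{s'-1}$. The time derivatives converge by Proposition \ref{delur}, the viscous term $\nu\Delta\mathbf u^{\textit R}\to\nu\Delta\mathbf u$ by Proposition \ref{La}, and the two nonlinear terms converge by the preceding proposition on $\mathcal S_{\textit R}[(\mathbf u^{\textit R}\cdot\nabla)\mathbf u^{\textit R}]$ and $\mathcal S_{\textit R}[(\mathbf u^{\textit R}\cdot\nabla)\mathbf\theta^{\textit R}]$; since $\mathcal S_{\textit R}\to I$ strongly and $\theta^{\textit R}f^{\textit R}\to\theta f$ from the strong convergence of $\theta^{\textit R}$ together with $f\in L^\infty([0,\tilde T];H^s)$, every term passes to the limit. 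The pressure is recovered by applying the Leray projection (equivalently, taking the divergence and using $\nabla\cdot\mathbf u=0$), so the limit solves \eqref{B1}--\eqref{B2} for a.e.\ $t$. The initial data are matched since $\mathbf u^{\textit R}(0)=\mathcal S_{\textit R}\mathbf u_0\to\mathbf u_0$ and $\mathbf\theta^{\textit R}(0)=\mathcal S_{\textit R}\mathbf\theta_0\to\mathbf\theta_0$, and the time continuity proved next permits evaluation at $t=0$.

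For step (ii) the velocity is the easy case: Proposition \ref{mp7} gives $\mathbf u\in L^2([0,\tilde T];\H^{s+1})$ and Proposition \ref{delur} gives $\partial_t\mathbf u\in L^2([0,\tilde T];\H^{s-1})$, so the Lions--Magenes interpolation lemma \cite{LM} (using $[\H^{s+1},\H^{s-1}]_{1/2}=\H^s$) yields $\mathbf u\in C([0,\tilde T];\H^s)$. The temperature is harder, since \eqref{B2} is a pure transport equation with no smoothing: one has only $\mathbf\theta\in L^\infty([0,\tilde T];H^s)$ with $\partial_t\mathbf\theta\in L^2([0,\tilde T];H^{s-1})$, which a priori yields merely $\mathbf\theta\in C([0,\tilde T];H^{s-1})$ and weak continuity into $H^s$. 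To reach $C([0,\tilde T];H^s)$ I would prove continuity of $t\mapsto\|\mathbf\theta(t)\|_{H^s}$: since $\mathbf u$ is divergence free, $\|\mathbf\theta(t)\|_{L^2}$ is conserved, while applying $\Lambda^s$ to \eqref{B2}, pairing with $\Lambda^s\mathbf\theta$, and bounding the commutator $\Lambda^s[(\mathbf u\cdot\nabla)\mathbf\theta]-(\mathbf u\cdot\nabla)(\Lambda^s\mathbf\theta)$ by Fefferman's refined estimate (Lemma \ref{lce}) --- which crucially requires no control of $\|\nabla\mathbf\theta\|_{H^s}$, unavailable at $s>n/2$ --- gives $\bigl|\tfrac{d}{dt}\|\mathbf\theta\|_{\dot{H}^s}^2\bigr|\le C\|\nabla\mathbf u\|_{\H^s}\|\mathbf\theta\|_{H^s}^2$, whose right-hand side is integrable in time by Proposition \ref{fin}. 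Hence $t\mapsto\|\mathbf\theta(t)\|_{H^s}$ is continuous, and in a Hilbert space weak continuity together with norm continuity forces strong continuity, giving $\mathbf\theta\in C([0,\tilde T];H^s)$.

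The main obstacle is precisely this last step. Establishing the two-sided bound on $\tfrac{d}{dt}\|\mathbf\theta(t)\|_{\dot{H}^s}^2$ \emph{for the limit solution} requires justifying the energy \emph{identity} rather than the one-sided energy inequality inherited from the approximations, so one must verify that $\mathbf\theta$ is regular enough for the pairing and the commutator estimate to be legitimate; this is exactly where the refined bound of Lemma \ref{lce}, needing no estimate on $\|\nabla\mathbf\theta\|_{H^s}$, is indispensable. Once the $H^s$-norm continuity is secured the remaining assertions are routine, and uniqueness follows immediately from Proposition \ref{p8}.
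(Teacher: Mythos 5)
Your synthesis of the propositions and your treatment of the velocity component coincide with the paper's: the same $L^{2}([0,\tilde T];\H^{s+1})\cap H^{1}([0,\tilde T];\H^{s-1})\hookrightarrow C([0,\tilde T];\H^{s})$ interpolation is invoked there (via Evans rather than Lions--Magenes), and uniqueness is delegated to Proposition \ref{p8} in both cases; your step (i), passing to the limit in the truncated equations, is left implicit in the paper but is a sensible addition. Where you genuinely diverge is the continuity of $\mathbf\theta$ with values in $H^{s}$. The paper argues on the Besov side: it writes $\|\mathbf\theta(t_2)-\mathbf\theta(t_1)\|_{H^{s}}\approx\|\mathbf\theta(t_2)-\mathbf\theta(t_1)\|_{B^{s}_{2,2}}$, discards the frequencies $j\ge N$ using the uniform $L^{\infty}_{t}H^{s}$ bound, and controls the finitely many low frequencies by integrating the equation in time, $\Delta_j\mathbf\theta(t_2)-\Delta_j\mathbf\theta(t_1)=-\int_{t_1}^{t_2}\Delta_j[(\mathbf u\cdot\nabla)\mathbf\theta]\,d\tau$, with $(\mathbf u\cdot\nabla)\mathbf\theta\in L^{\infty}_{t}H^{s-1}$. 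Your route --- weak continuity plus continuity of $t\mapsto\|\mathbf\theta(t)\|_{H^{s}}$ via the $\dot H^{s}$ energy identity and Lemma \ref{lce}, then the Hilbert-space upgrade from weak-plus-norm continuity to strong continuity --- is the classical Kato--Majda argument, and it is indeed available at the low regularity $s>n/2$ precisely because Lemma \ref{lce} requires no bound on $\|\nabla\mathbf\theta\|_{H^{s}}$, as you observe. The trade-off is real: your method must justify the energy identity for the limit solution, which you flag but do not carry out --- concretely, one mollifies $\mathbf\theta$, uses a Friedrichs-type commutator lemma together with $\nabla\mathbf u\in L^{1}([0,\tilde T];L^{\infty})$ (from $\mathbf u\in L^{2}([0,\tilde T];\H^{s+1})$) to kill the extra commutator, and applies the resulting inequality both forward and backward in time to get two-sided norm continuity. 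The paper's method avoids that issue entirely but instead needs the high-frequency tails of $\mathbf\theta(t)$ to be small \emph{uniformly} in $t$, which does not follow from the $L^{\infty}_{t}H^{s}$ bound alone and is asserted without proof at \eqref{tle}. Neither obstacle is fatal, but if you take your route you should write out the mollification step rather than leave it as a remark.
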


\begin{proof}
 First note that, by Proposition \ref{mp7}, we already have $\mathbf u\in L^{2}([0, \tilde{T}];  \H^{s+1}(\mathbb R^{n}))$. So the only part that is left to prove is $(\mathbf u, \mathbf \theta) \in  C \left([0, \tilde T]; \H^{s}(\mathbb R^n)\right) \times  C \left([0, \tilde T]; H^{s}(\mathbb R^n)\right)$. \\
 \noindent
Since by Propositions \ref{delur} and \ref{mp7},  $\mathbf u \in L^{2}\left([0, \tilde T]; \H^{s+1}(\mathbb R^n)\right)$ and $\frac{\partial \mathbf u}{\partial t} \in  L^{2}\left([0, \tilde T]; \H^{s-1}(\mathbb R^n)\right)$, by standard known results of parabolic partial differential equations (e.g., see Theorem 4, section 5.9 of \cite{Ev}), we have, $\mathbf u \in  C \left([0, \tilde T]; \H^{s}(\mathbb {R}^n)\right)$.

To prove $\mathbf \theta \in  C \left([0, \tilde T]; H^{s}(\mathbb R^n)\right)$, consider $t_1, t_2 \in [0, \tilde T]$ such that $0 \leq t_1 \leq t_2 < \tilde T$. Then,
\[ \|\mathbf \theta(t_2) - \mathbf \theta(t_1)\|_{H^s} \approx \|\mathbf \theta(t_2) - \mathbf \theta(t_1)\|_{B^{s}_{2, 2}} = \left\{\sum_{j \in \mathbb Z} \left( 2^{js}\left\| \Delta_j \mathbf \theta(t_2) - \Delta_j \mathbf \theta(t_1)\right\|_{L^2}\right)^2\right\}^{1/2},\]
where $\Delta_j$ is the non-homogenous Littlewood-Paley operators.

Let $\epsilon > 0$ be arbitrarily small. As $\mathbf \theta \in  L^{\infty} \left([0, \tilde T]; H^{s}(\mathbb R^n)\right)$, there exists an integer $N > 0$ such that
\begin{align}\label{tle}
\left\{\sum_{j \geq N} \left(2^{js}\left\| \Delta_j \mathbf \theta(t_2) - \Delta_j \mathbf \theta(t_1)\right\|_{L^2}\right)^2\right\}^{1/2} < \frac{\epsilon}{2}.
\end{align}
But we have
\begin{align*}
&\left\{\sum_{j \in \mathbb Z} \left( 2^{js}\left\| \Delta_j \mathbf \theta(t_2) - \Delta_j \mathbf \theta(t_1)\right\|_{L^2}\right)^2\right\}^{1/2}
\\ &= \left\{ \left(\sum_{j < N} + \sum_{j \geq N}\right) \left( 2^{js}\left\| \Delta_j \mathbf \theta(t_2) - \Delta_j \mathbf \theta(t_1)\right\|_{L^2}\right)^2\right\}^{1/2}.
\end{align*}
Now for $0 \leq t_1 < t_2 < \tilde T$ we have,
\begin{align*}
\Delta_j \mathbf \theta(t_2) - \Delta_j \mathbf \theta(t_1) = \int_{t_1}^{t_2} \frac{\partial}{\partial \tau} \Delta_j \mathbf \theta(\tau) \, d\tau = \int_{t_1}^{t_2} \Delta_j \left[ -(\mathbf u \cdot \nabla) \mathbf \theta\right](\tau) \, d\tau
\end{align*}
So we get,
\begin{align*}
\sum_{j < N} 2^{2js}\left.\| \Delta_j \mathbf \theta(t_2) - \Delta_j \mathbf \theta(t_1)\|^2_{L^2}\right. &= \sum_{j < N} 2^{2js}\left.\left\| \int_{t_1}^{t_2} \Delta_j \left[ -(\mathbf u \cdot \nabla) \mathbf \theta\right](\tau) \, d\tau \right\|^2_{L^2}\right.
\\ &\leq \sum_{j < N} 2^{2js}\left. \left( \int_{t_1}^{t_2} \| \Delta_j (\mathbf u \cdot \nabla \mathbf \theta)\|_{L^2} \, d\tau \right)^2\right.
\\ &= \sum_{j < N} 2^{2j}\left. \left( \int_{t_1}^{t_2} 2^{j(s-1)}\| \Delta_j (\mathbf u \cdot \nabla \mathbf \theta)\|_{L^2} \, d\tau \right)^2\right.
\\ &\leq  \sum_{j < N} 2^{2j}\left. \int_{t_1}^{t_2}\| (\mathbf u \cdot \nabla) \mathbf \theta\|^2_{H^{s-1}} \, d\tau \right.
\\ &\leq \sum_{j < N} 2^{2j}\left. \int_{t_1}^{t_2}\| (\mathbf u \cdot \nabla) \mathbf \theta\|^2_{L^{\infty}\left([0, \tilde T]; H^{s-1}\right)} \, d\tau = I\right.
\end{align*}
As $(\mathbf u, \mathbf \theta) \in L^{\infty}\left([0, \tilde T]; \H^{s}\right) \times L^{\infty}\left([0, \tilde T]; H^{s}\right)$ and from Remark \ref{prodhs} and Remark \ref{div}  we obtain,
\begin{align*}
\| (\mathbf u \cdot \nabla) \mathbf \theta\|^2_{L^{\infty}\left([0, \tilde T]; H^{s-1}\right)} &= \left(\sup_{t \in [0, \tilde T]}\| (\mathbf u \cdot \nabla) \mathbf \theta\|_{H^{s-1}}\right)^2
\\ &\leq \left(\sup_{t \in [0, \tilde T]}\| \mathbf u \|_{\H^s} \| \mathbf \theta\|_{H^{s}}\right)^2
\\ &\leq \left(\sup_{t \in [0, \tilde T]}\| \mathbf u \|_{\H^s} \cdot \sup_{t \in [0, \tilde T]}\| \mathbf \theta\|_{H^{s}}\right)^2 < C < \infty.
\end{align*}
So from this result for $\left.\right.|t_2 - t_1| < \frac{\epsilon}{C {2^{2N+1}}}$,
\begin{align}\label{ile}
 I \leq C \sum_{j < N} 2^{2j} \left|t_2 - t_1\right| \leq C 2^{2N} \left|t_2 - t_1\right| < \frac{\epsilon}{2}. \left.\right.\left.\right. 
\end{align}
So by combining $\eqref{tle}$ and $\eqref{ile}$ we capture $\mathbf \theta \in C\left([0, \tilde T]; H^{s}(\mathbb R^n)\right)$. This completes the proof.
\end{proof}

\subsection{Blow-up Criterion}\label{BC}

In this subsection we will prove the Blowup criterion of the local-in-time strong solutions of  $B_{\nu, 0}$. Here we keep our attention to the three-dimensions, as global solvability in two-dimensions for the system $B_{\nu, 0}$ is known due to Chae \cite{Dc}. We show that the $BMO$ norms of the vorticity and gradient of temperature controls the breakdown of smooth solutions. Later we prove that the condition on the gradient of temperature can be relaxed under suitable assumption on the regularity of the initial temperature. We here assume $f = e_3,$ where $e_3$ denotes the $3^{\textrm{rd}}$ standard basis vector in $\mathbb{R}^3$, i.e., $e_3 = (0,0,1)$.

\begin{theorem}\label{buc}
Let $(\mathbf u_{0}, \mathbf \theta_{0}) \in \H^s(\mathbb R^n) \times H^{s}(\mathbb R^n)$, s $>$ $\frac{n}{2}$+1, n = 3. Let $(\mathbf u, \mathbf \theta) \in L^2\left([0, \tilde T]; \H^{s+1}(\mathbb R^n)\right) \cap C\left([0, \tilde T]; \H^{s}(\mathbb R^n)\right) \times C\left([0, \tilde T]; H^{s}(\mathbb R^n)\right)$ be a strong solution of Boussinesq equations. If $(\mathbf u, \mathbf \theta)$ satisfies the condition 
\begin{align}\label{buce}
\int_{0}^{\tilde T} \left( \|\nabla \times \mathbf u(\tau)\|_{BMO} + \|\nabla \mathbf \theta(\tau)\|_{BMO}\right) \, d\tau < \infty,
\end{align}
then $(\mathbf u, \mathbf \theta)$ can be continuously extended to $[0, T]$ for some $T > \tilde T.$ 
\end{theorem}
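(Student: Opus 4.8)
The plan is to reduce the statement to an \emph{a priori} bound. Since the local existence result (Theorem \ref{mt1}) produces a solution on a time interval whose length depends only on the $\H^s\times H^s$-norm of the data, the solution can be continued beyond $\tilde T$ as soon as one shows that
\[
M := \sup_{t\in[0,\tilde T]}\left(\|\mathbf u(t)\|_{\H^s}^2 + \|\mathbf\theta(t)\|_{H^s}^2\right) < \infty.
\]
Thus the whole task is to turn the integrability condition \eqref{buce} into such a bound, and the mechanism will be a logarithmic Gronwall inequality of Beale--Kato--Majda type.

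First I would run the $H^s$-energy estimate as in Proposition \ref{fin}, but retaining the sharp commutator structure. Applying $J^s$ to \eqref{B1}--\eqref{B2}, pairing in $L^2$ with $J^s\mathbf u$ and $J^s\mathbf\theta$, and writing, for $\phi$ equal to $\mathbf u$ or $\mathbf\theta$, $J^s[(\mathbf u\cdot\nabla)\phi] = (\mathbf u\cdot\nabla)J^s\phi + \left(J^s[(\mathbf u\cdot\nabla)\phi]-(\mathbf u\cdot\nabla)J^s\phi\right)$, the transport contributions $\left((\mathbf u\cdot\nabla)J^s\mathbf u, J^s\mathbf u\right)_{L^2}$ and $\left((\mathbf u\cdot\nabla)J^s\mathbf\theta, J^s\mathbf\theta\right)_{L^2}$ vanish by the divergence-free condition. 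The remaining commutators are controlled by the Kato--Ponce estimate \eqref{ce}, and the forcing term $\left(J^s(\mathbf\theta e_3),J^s\mathbf u\right)_{L^2}$ is bounded by $\|\mathbf\theta\|_{H^s}\|\mathbf u\|_{\H^s}$ since $e_3$ is constant. Discarding the favourable viscous term $\nu\|\nabla\mathbf u\|_{\dH^s}^2$ and setting $X(t)=\|\mathbf u\|_{\H^s}^2+\|\mathbf\theta\|_{H^s}^2$, Young's inequality (Lemma \ref{Ye}) then yields
\[
\frac{d}{dt}X \le C\left(1 + \|\nabla\mathbf u\|_{L^\infty} + \|\nabla\mathbf\theta\|_{L^\infty}\right)X.
\]

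The heart of the argument is to replace the $L^\infty$-gradients by $BMO$-norms. Applying the Kozono--Taniuchi logarithmic Sobolev inequality (Lemma \ref{kte}) with $p=2$ to $\nabla\mathbf u$ and $\nabla\mathbf\theta$, together with the Calder\'on--Zygmund bound $\|\nabla\mathbf u\|_{BMO}\le C\|\nabla\times\mathbf u\|_{BMO}$ recorded in Section 2, gives
\[
\|\nabla\mathbf u\|_{L^\infty} \le C\left(1 + \|\nabla\times\mathbf u\|_{BMO}\log\left(e+\|\nabla\mathbf u\|_{H^{s-1}}\right)\right),
\]
and the analogous bound for $\|\nabla\mathbf\theta\|_{L^\infty}$ in terms of $\|\nabla\mathbf\theta\|_{BMO}$ and $\|\nabla\mathbf\theta\|_{H^{s-1}}$. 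Here the hypothesis $s>\tfrac n2+1$ is essential: it forces $s-1>\tfrac n2$, so that Lemma \ref{kte} applies, and by Remark \ref{gradhs} one has $\|\nabla\mathbf u\|_{H^{s-1}}\le\|\mathbf u\|_{\H^s}$ and $\|\nabla\mathbf\theta\|_{H^{s-1}}\le\|\mathbf\theta\|_{H^s}$, whence $\log(e+\|\nabla\mathbf u\|_{H^{s-1}})$ and its temperature counterpart are each dominated by $\log(e+X)$. Writing $g(t)=\|\nabla\times\mathbf u(t)\|_{BMO}+\|\nabla\mathbf\theta(t)\|_{BMO}$ and $Y=e+X$, the two estimates combine to
\[
\frac{dY}{dt} \le C\left(1+g(t)\right)Y\log Y .
\]

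Finally I would close the estimate by the standard double-exponential trick: dividing by $Y\log Y$ gives $\frac{d}{dt}\log\log Y \le C(1+g(t))$, so that
\[
\log\log Y(t) \le \log\log Y(0) + C\tilde T + C\int_0^{\tilde T} g(\tau)\,d\tau,
\]
and the right-hand side is finite precisely by \eqref{buce}. This produces a double-exponential bound on $Y$, hence the desired $M<\infty$. With the uniform $\H^s\times H^s$ bound in hand one has $(\mathbf u(\tilde T),\mathbf\theta(\tilde T))\in\H^s\times H^s$, and re-applying Theorem \ref{mt1} with this as initial data extends $(\mathbf u,\mathbf\theta)$ to $[0,T]$ for some $T>\tilde T$. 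The main obstacle is the interplay in the third paragraph: the commutator estimate must be arranged to output exactly the $L^\infty$-norms $\|\nabla\mathbf u\|_{L^\infty}$ and $\|\nabla\mathbf\theta\|_{L^\infty}$ that the logarithmic Sobolev inequality then converts to $BMO$, and the temperature term is the delicate one, since there is no parabolic smoothing on $\mathbf\theta$ to absorb any loss of derivatives.
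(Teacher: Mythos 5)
Your proposal is correct and follows essentially the same route as the paper: the $J^s$-energy estimate with the Kato--Ponce commutator, the Kozono--Taniuchi inequality converting $\|\nabla\mathbf u\|_{L^\infty}$ and $\|\nabla\mathbf\theta\|_{L^\infty}$ into $BMO$-norms times $\log(e+X)$, and a logarithmic Gronwall argument yielding a double-exponential bound that permits continuation. The only (immaterial) difference is bookkeeping: you integrate $\frac{d}{dt}\log\log Y\le C(1+g)$ directly, whereas the paper first applies Gronwall to $X$, then takes logarithms and applies Gronwall a second time to $\log(eX)$.
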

\begin{proof}
First apply $J^s$ to the system $B_{\nu, 0}$ to get
\begin{align}\label{jsu}
\frac{\partial (J^s \mathbf u)}{\partial t} + J^s \left[ (\mathbf u \cdot \nabla)\mathbf u \right] - \nu \Delta J^s \mathbf u + \nabla J^s p = J^s (\theta e_3),
\end{align}
\begin{align}\label{jst}
\frac{\partial (J^s \mathbf \theta)}{\partial t} + J^s \left[ (\mathbf u \cdot \nabla)\mathbf \theta \right] = 0
\end{align}
Taking the ${L^2}$-inner product of \eqref{jsu} with $J^s \mathbf u,$ and that of \eqref{jst} with $J^s \mathbf \theta,$ we have
\begin{align}\label{jip1}
\left( \frac{\partial (J^s \mathbf u)}{\partial t}, J^s \mathbf u \right)_{L^2} &+ \left( J^s \left[ (\mathbf u \cdot \nabla)\mathbf u \right], J^s \mathbf u \right)_{L^2} - \left( \nu \Delta J^s \mathbf u, J^s \mathbf u \right)_{L^2} \nonumber
\\ &+ \left( \nabla J^s p, J^s \mathbf u \right)_{L^2} = \left( J^s (\theta e_3), J^s \mathbf u \right)_{L^2} 
\end{align}
\begin{align}\label{jip2}
\left( \frac{\partial (J^s \mathbf \theta)}{\partial t}, J^s \mathbf \theta \right)_{L^2} + \left( J^s \left[ (\mathbf u \cdot \nabla)\mathbf \theta \right], J^s \mathbf \theta \right)_{L^2} = 0
\end{align}
We estimate each term separately. \\
\noindent 
First note,
\begin{align*}
\left( \frac{\partial (J^s \mathbf u)}{\partial t}, J^s \mathbf u \right)_{L^2} = \frac{1}{2} \frac{d}{dt} \|J^s \mathbf u\|^{2}_{\L^2} = \frac{1}{2} \frac{d}{dt} \|\mathbf u\|^{2}_{\H^s}.
\end{align*}
Using divergence free condition on $\mathbf u$ and the commutator estimate \eqref{ce} we obtain,
\begin{align*}
&\left| \left( J^s \left[ (\mathbf u \cdot \nabla)\mathbf u \right], J^s \mathbf u \right)_{L^2}\right| \\
&\ \ \ \ = \left| \left( [J^s, \mathbf u] \nabla \mathbf u, J^s \mathbf u \right)_{L^2} + \left( (\mathbf u \cdot \nabla)J^s \mathbf u, J^s \mathbf u \right)_{L^2}\right| = \left| \left( [J^s, \mathbf u] \nabla \mathbf u, J^s \mathbf u \right)_{L^2}\right|
\\ & \ \ \ \ \leq \| [J^s, \mathbf u] \nabla \mathbf u\|_{\L^2} \| J^s \mathbf u \|_{\L^2} \leq C\|\nabla \mathbf u\|_{L^{\infty}}\| \mathbf u\|_{\H^s} \| \mathbf u\|_{\H^s}
\\ & \ \ \ \ \leq C  \left(\| \mathbf u\|^{2}_{\H^s} + \| \mathbf \theta\|^{2}_{H^s}\right) \left(\|\nabla \mathbf u\|_{L^{\infty}} + \|\nabla \mathbf \theta\|_{L^{\infty}}\right)
\end{align*}
Integration by parts yields,
\begin{align*}
\left( -\nu \Delta J^s \mathbf u, J^s \mathbf u \right)_{L^2} = \nu \|J^s \nabla \mathbf u\|^{2}_{\L^2} = \nu \|\nabla \mathbf u\|^{2}_{\H^s}.
\end{align*}
Similarly,
\begin{align*}
\left( \nabla J^s p, J^s \mathbf u \right)_{L^2} = - \left(J^s p, J^s \nabla \cdot \mathbf u \right)_{L^2} = 0.
\end{align*}
It is straightforward to see that
\begin{align*}
\left|\left( J^s (\theta e_3), J^s \mathbf u \right)_{L^2}\right| &\leq \| J^s (\theta e_3)\|_{L^2} \| J^s \mathbf u \|_{\L^2}
\\ &\leq \| \mathbf \theta\|_{H^s}\|\mathbf u\|_{\H^s} \leq C\left(\| \mathbf \theta\|^2_{H^s} + \|\mathbf u\|^2_{\H^s}\right)
\end{align*}
Similarly
\begin{align*}
\left( \frac{\partial (J^s \mathbf \theta)}{\partial t}, J^s \mathbf \theta \right)_{L^2} = \frac{1}{2} \frac{d}{dt} \|\mathbf \theta\|^{2}_{H^s},
\end{align*}
and divergence free condition on $\mathbf u$ and commutator estimate \eqref{ce} yield,
\begin{align*}
&\left| \left( J^s \left[ (\mathbf u \cdot \nabla)\mathbf \theta \right], J^s \mathbf \theta \right)_{L^2}\right| \\
&\ \ \ \ = \left| \left( [J^s, \mathbf u] \nabla \mathbf \theta, J^s \mathbf \theta \right)_{L^2} + \left( (\mathbf u \cdot \nabla)J^s \mathbf \theta, J^s \mathbf \theta \right)_{L^2}\right| = \left| \left( [J^s, \mathbf u] \nabla \mathbf \theta, J^s \mathbf \theta \right)_{L^2}\right|
\\ &\ \ \ \ \leq \| [J^s, \mathbf u] \nabla \mathbf \theta\|_{L^2} \| J^s \mathbf \theta \|_{L^2} \leq C\left( \|\nabla \mathbf u\|_{L^{\infty}}\|\mathbf \theta\|_{H^{s}} + \|\mathbf u\|_{\H^s}\|\nabla \mathbf \theta\|_{L^{\infty}}\right) \|\mathbf \theta\|_{H^{s}}
\\ &\ \ \ \ \leq C\left( \|\nabla \mathbf u\|_{L^{\infty}}\|\mathbf \theta\|^{2}_{H^{s}} + \|\mathbf u\|_{\H^s}\|\nabla \mathbf \theta\|_{L^{\infty}} \|\mathbf \theta\|_{H^{s}}\right)
\\ &\ \ \ \ \leq C\left( \|\nabla \mathbf u\|_{L^{\infty}}\left(\|\mathbf u\|^{2}_{\H^{s}} + \|\mathbf \theta\|^{2}_{H^{s}}\right)+ \|\nabla \mathbf \theta\|_{L^{\infty}} \left( \|\mathbf u\|^{2}_{\H^s} + \|\mathbf \theta\|^{2}_{H^{s}}\right)\right)
\\ &\ \ \ \ \leq C \left(\|\mathbf u\|^{2}_{\H^{s}} + \|\mathbf \theta\|^{2}_{H^{s}}\right) \left(\|\nabla \mathbf u\|_{L^{\infty}}+ \|\nabla \mathbf \theta\|_{L^{\infty}}\right).
\end{align*}
Adding \eqref{jip1} and \eqref{jip2}, and using all the above estimates, we obtain,
\begin{align*}
&\frac{1}{2} \frac{d}{dt} \left(\|\mathbf u\|^{2}_{\H^s} + \|\mathbf \theta\|^{2}_{H^s}\right) + \nu \|\nabla \mathbf u\|^{2}_{\H^s} \\
&\ \ \ \ \leq C \left(\|\nabla \mathbf u\|_{L^{\infty}}+ \|\nabla \mathbf \theta\|_{L^{\infty}} + 1 \right) \left(\|\mathbf u\|^{2}_{\H^{s}} + \|\mathbf \theta\|^{2}_{H^{s}}\right).
\end{align*}
Ignoring the second term on the left hand side of the above estimate, we have after rearrangement,
\[ \frac{d}{dt} \left(\|\mathbf u\|^{2}_{\H^s} + \|\mathbf \theta\|^{2}_{H^s}\right) \leq C \left(\|\nabla \mathbf u\|_{L^{\infty}}+ \|\nabla \mathbf \theta\|_{L^{\infty}} + 1 \right) \left(\|\mathbf u\|^{2}_{\H^{s}} + \|\mathbf \theta\|^{2}_{H^{s}}\right).\]
Setting $Z(t) = \|\mathbf u(t)\|^{2}_{\H^s} + \|\mathbf \theta(t)\|^{2}_{H^s}$ for $t \in [0, \tilde T]$, we obtain,
\[\frac{d}{dt} Z(t) \leq C \left(\|\nabla \mathbf u(t)\|_{L^{\infty}}+ \|\nabla \mathbf \theta(t)\|_{L^{\infty}} + 1 \right) Z(t)\]
Standard Gronwall's inequality gives
\[ Z(t) \leq Z(0) \left.\right. exp \left( C \int_{0}^{t} (\|\nabla \mathbf u(\tau)\|_{L^{\infty}}+ \|\nabla \mathbf \theta(\tau)\|_{L^{\infty}} + 1) \, d\tau \right).\]
Hence
\begin{align}\label{ee3}
 &\|\mathbf u(t)\|^{2}_{\H^s} + \|\mathbf \theta(t)\|^{2}_{H^s} \nonumber
\\ &\ \ \ \ \leq \left(\|\mathbf u_0\|^{2}_{\H^s} + \|\mathbf \theta_0\|^{2}_{H^s}\right) \left.\right. exp \left( C \int_{0}^{t} (\|\nabla \mathbf u(\tau)\|_{L^{\infty}}+ \|\nabla \mathbf \theta(\tau)\|_{L^{\infty}} + 1) \, d\tau \right).
\end{align}
Due to the logarithmic Sobolev inequality given in Lemma \ref{kte}, and the fact that singular integral operators of Calderon-Zygmund type are bounded in $BMO$ (i.e. $\|\nabla \mathbf u\|_{BMO} \leq \|\nabla \times \mathbf u\|_{BMO}$), we have, for $s > \frac{n}{2} + 1$, 
\begin{align}\label{bmou}
\|\nabla \mathbf u\|_{L^{\infty}} &\leq C\left( 1 + \|\nabla \mathbf u\|_{BMO}\left(1 + log^{+}\|\nabla \mathbf u\|_{ \H^{s-1}}\right) \right) \nonumber
\\ &\leq C\left( 1 + \|\nabla \times \mathbf u\|_{BMO}\left(1 + log^{+}\|\mathbf u\|_{\H^{s}}\right) \right) \nonumber
\\ &\leq C\left( 1 + \|\nabla \times \mathbf u\|_{BMO}\left(1 + \frac{1}{2}log^{+}\|\mathbf u\|^{2}_{\H^{s}}\right) \right) \nonumber
\\ &\leq C\left( 1 + \|\nabla \times \mathbf u\|_{BMO}\left(1 + \frac{1}{2}log^{+} \left(\|\mathbf u\|^{2}_{\H^{s}} + \|\mathbf \theta\|^{2}_{H^{s}}\right) \right) \right) \nonumber
\\ &\leq C\left( 1 + \|\nabla \times \mathbf u\|_{BMO}\left(1 + log^{+} \left(\|\mathbf u\|^{2}_{\H^{s}} + \|\mathbf \theta\|^{2}_{H^{s}}\right) \right) \right).
\end{align}
Similarly we obtain for $\nabla \mathbf \theta$,
\begin{align}\label{bmot}
\|\nabla \mathbf \theta\|_{L^{\infty}} \leq C\left( 1 + \|\nabla \mathbf \theta\|_{BMO}\left(1 + log^{+} \left(\|\mathbf u\|^{2}_{\H^{s}} + \|\mathbf \theta\|^{2}_{H^{s}}\right) \right) \right).
\end{align}
Now using \eqref{bmou} and \eqref{bmot} in \eqref{ee3}, we obtain for all $t \in [0, \tilde T]$,
\begin{align*}
&\|\mathbf u(t)\|^{2}_{\H^s} + \|\mathbf \theta(t)\|^{2}_{H^s}\\
& \ \ \ \ \leq\left(\|\mathbf u_0\|^{2}_{\H^s} + \|\mathbf \theta_0\|^{2}_{H^s}\right) exp\Big[C\int_{0}^{t} \Big\{3 + \left(\|\nabla \times \mathbf u(\tau)\|_{BMO}+ \|\nabla \mathbf \theta(\tau)\|_{BMO}\right)
\\ & \ \ \ \ \quad \times\left(1 + log^{+} \left(\|\mathbf u(\tau)\|^{2}_{\H^{s}} + \|\mathbf \theta(\tau)\|^{2}_{H^{s}}\right) \right)\, \Big\}d\tau\Big].
\end{align*}
Let $X(t) = \|\mathbf u(t)\|^{2}_{\H^s} + \|\mathbf \theta(t)\|^{2}_{H^s}.$ Then by taking ``log" on both sides we get for all $t \in [0, \tilde T]$,
\begin{align*}
&\log X(t) \\
&\ \ \ \ \leq \log X(0) + C \int_{0}^{t} \Big\{3 + \left(\|\nabla \times \mathbf u(\tau)\|_{BMO}+ \|\nabla \mathbf \theta(\tau)\|_{BMO}\right) (1 + log^{+} X(\tau))\Big\} \, d\tau.
\end{align*}
Rearranging the terms we have
\begin{align*}
&\log (eX(t)) \\
&\ \ \ \ \leq \log (eX(0)) + C\tilde{T} + \int_{0}^{t} \Big\{\left(\|\nabla \times \mathbf u(\tau)\|_{BMO}+ \|\nabla \mathbf \theta(\tau)\|_{BMO}\right) (log (eX(\tau)))\Big\} \, d\tau.
\end{align*}
 Now Gronwall's inequality yields
\[log (eX(t)) \leq (log (eX(0)) + C\tilde T)\left. exp\right.\left( C \int_{0}^{t} (\|\nabla \times \mathbf u(\tau)\|_{BMO}+ \|\nabla \mathbf \theta(\tau)\|_{BMO}) \, d\tau \right).\]
Taking supremum over all $t \in [0, \tilde T]$ we obtain,
\begin{align*}
&\sup_{t \in [0, \tilde T]} log X(t) \leq \sup_{t \in [0, \tilde T]} log (eX(t))
\\ & \ \ \ \ \leq (log (eX(0)) + C\tilde T)\left. exp\right.\left( C \int_{0}^{\tilde T} (\|\nabla \times \mathbf u(\tau)\|_{BMO}+ \|\nabla \mathbf \theta(\tau)\|_{BMO}) \, d\tau \right).
\end{align*}
So finally we acquire,
\begin{align*}
&\sup_{t \in [0, \tilde T]} X(t) \\
& \ \ \ \ \leq e^{(1+C\tilde T)} X(0)exp\left\{ exp\left( C \int_{0}^{\tilde T} (\|\nabla \times \mathbf u(\tau)\|_{BMO}+ \|\nabla \mathbf \theta(\tau)\|_{BMO}) \, d\tau \right)\right\}.
\end{align*}
This concludes that if 
\[\int_{0}^{\tilde T} (\|\nabla \times \mathbf u(\tau)\|_{BMO}+ \|\nabla \mathbf \theta(\tau)\|_{BMO}) \, d\tau < \infty,\]
then by continuation of local solutions, we can extend the solution to $[0, T]$ for some $T > \tilde T$.
\end{proof}

\begin{remark}
Use of the logarithmic Sobolev inequality given in Lemma \ref{lsi} in the above proof will replace the condition \eqref{buce} by \[\int_{0}^{\tilde T} (\|\nabla \times \mathbf u(\tau)\|_{L^{\infty}}+ \|\nabla \mathbf \theta(\tau)\|_{L^{\infty}}) \, d\tau < \infty.\]
\end{remark}
We now show that the assumption on $\nabla\mathbf\theta$, as made in Theorem \ref{buc}, can be relaxed completely. In other words, the bound on curl of $\mathbf u$ is enough to extend the solution continuously to some time $T > \tilde T$, provided $ \mathbf \theta_0 \in H^s(\mathbb R^{n}) \cap W^{1, p}(\mathbb R^{n})$.\\
\noindent
Before proving the above result let us note the following vector identity.
\begin{remark}\label{vi}
\begin{align*}
\nabla (\mathbf u \cdot \nabla \mathbf \theta) &= (\mathbf u \cdot \nabla) \nabla \mathbf \theta + (\nabla \mathbf \theta \cdot  \nabla) \mathbf u + \mathbf u \times (\nabla \times \nabla \mathbf \theta) + \nabla \mathbf \theta \times (\nabla \times \mathbf u)
\\ &= (\mathbf u \cdot \nabla) \nabla \mathbf \theta + (\nabla \mathbf \theta \cdot  \nabla) \mathbf u + \nabla \mathbf \theta \times (\nabla \times \mathbf u)
\\ &=  (\mathbf u \cdot \nabla) \nabla \mathbf \theta + (\nabla \mathbf u)^{t} \cdot \nabla \mathbf \theta 
\end{align*}
where we have used the facts that curl of the gradient of a scalar function is zero (i.e., $\mathbf u \times (\nabla \times \nabla \mathbf \theta) = 0$)  and $(\nabla \mathbf u)^{t} \cdot \nabla \mathbf \theta = (\nabla \mathbf \theta \cdot  \nabla) \mathbf u + \nabla \mathbf \theta \times (\nabla \times \mathbf u).$.
\end{remark}

\begin{theorem}\label{buc1}
Let $s > \frac{n}{2} + 1$, $\mathbf u_{0} \in \H^{s}(\mathbb R^{n}),$ and $ \mathbf \theta_0 \in H^s(\mathbb R^{n}) \cap W^{1, p}(\mathbb R^{n}),$ for $ 2 \leq p \leq \infty,$ n=3. Let $(\mathbf u, \mathbf \theta) \in L^2\left([0, \tilde T]; \H^{s+1}(\mathbb R^n)\right) \cap C\left([0, \tilde T]; \H^{s}(\mathbb R^n)\right) \times C\left([0, \tilde T]; H^{s}(\mathbb R^n)\right)$ as before. Then 
\[\int_{0}^{\tilde T} \|\nabla \times \mathbf u(\tau)\|_{BMO} \,d\tau < \infty\]
guarantees that the solution can be extended continuously to $[0, T]$ for some $T > \tilde T.$
\end{theorem}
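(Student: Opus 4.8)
The plan is to re-run the $H^s$ energy estimate of Theorem \ref{buc}, but to replace the assumed integrability of $\|\nabla\theta\|_{BMO}$ by an a priori bound on $\|\nabla\theta\|_{L^\infty}$ extracted from the transport structure of \eqref{B2}, so that only $\|\nabla\times\mathbf u\|_{BMO}$ is needed to close everything. First I would differentiate \eqref{B2} and apply the vector identity of Remark \ref{vi} to see that $w:=\nabla\theta$ solves the transport equation with stretching
\[\partial_t w + (\mathbf u\cdot\nabla)w = -(\nabla\mathbf u)^{t}\cdot w .\]
Testing against $|w|^{p-2}w$ (or propagating along the flow when $p=\infty$) and using that $\mathbf u$ is divergence free, so the transport term integrates to zero, yields
\[\frac{d}{dt}\|\nabla\theta\|_{L^p}\le \|\nabla\mathbf u\|_{L^\infty}\|\nabla\theta\|_{L^p}, \qquad \|\nabla\theta(t)\|_{L^p}\le \|\nabla\theta_0\|_{L^p}\exp\Big(\int_0^t\|\nabla\mathbf u(\tau)\|_{L^\infty}\,d\tau\Big).\]
This is precisely where the hypothesis $\theta_0\in W^{1,p}$ enters: it shows that $\nabla\theta$ remains bounded in $L^p$ as long as $\int_0^t\|\nabla\mathbf u\|_{L^\infty}$ does.

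Next I would carry out the $J^s$ energy estimate exactly as in Theorem \ref{buc} (Kato--Ponce commutator estimate \eqref{ce} for the velocity and temperature nonlinearities, integration by parts to kill the transport and pressure terms, and $|(J^s(\theta e_3),J^s\mathbf u)_{L^2}|\le CX$ for the buoyancy), reaching
\[\frac{d}{dt}X(t)\le C\big(1+\|\nabla\mathbf u\|_{L^\infty}+\|\nabla\theta\|_{L^\infty}\big)X(t), \qquad X(t):=\|\mathbf u(t)\|_{\H^s}^2+\|\theta(t)\|_{H^s}^2 .\]
For the velocity I would invoke the Kozono--Taniuchi logarithmic Sobolev inequality (Lemma \ref{kte}) and the Calder\'on--Zygmund bound $\|\nabla\mathbf u\|_{BMO}\le C\|\nabla\times\mathbf u\|_{BMO}$, exactly as in \eqref{bmou}, to get
\[\|\nabla\mathbf u\|_{L^\infty}\le C\big(1+\|\nabla\times\mathbf u\|_{BMO}(1+\log^{+}X)\big).\]
The genuinely new point is $\|\nabla\theta\|_{L^\infty}$: instead of \eqref{bmot} I would interpolate the transport-controlled low norm $\|\nabla\theta\|_{L^p}$ against the high norm $\|\nabla\theta\|_{H^{s-1}}\le\|\theta\|_{H^s}$ (legitimate since $s-1>n/2$) by a logarithmic Sobolev inequality, producing
\[\|\nabla\theta\|_{L^\infty}\le C\big(1+\|\nabla\theta\|_{L^p}(1+\log^{+}X)\big),\]
and then inserting the exponential bound on $\|\nabla\theta\|_{L^p}$ from the first step.

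To close the estimate, set $A(t):=\int_0^t\|\nabla\mathbf u(\tau)\|_{L^\infty}\,d\tau$, so that the velocity bound reads $A'(t)\le C(1+\|\nabla\times\mathbf u\|_{BMO}(1+\log^{+}X))$, while the two logarithmic bounds turn the energy inequality into
\[\frac{d}{dt}\log(eX)\le C\Big(1+\big(\|\nabla\times\mathbf u\|_{BMO}+e^{A(t)}\big)\big(1+\log(eX)\big)\Big).\]
I would run a continuity/bootstrap argument on $[0,\tilde T)$: on any subinterval where $X$ is finite the finiteness of $\int_0^{\tilde T}\|\nabla\times\mathbf u\|_{BMO}$ forces $A(\tilde T)<\infty$, hence $e^{A}$ is bounded, and a double-exponential Gronwall estimate then bounds $\sup_{[0,\tilde T]}X$ in terms of the data and $\int_0^{\tilde T}\|\nabla\times\mathbf u\|_{BMO}$ alone. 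A uniform bound on the $H^s$ norm up to $\tilde T$ lets me restart the local existence theorem (Theorem \ref{mt1}) from a time just below $\tilde T$ and extend $(\mathbf u,\theta)$ to $[0,T]$ with $T>\tilde T$.

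The main obstacle is the feedback loop between the two borderline quantities: $\|\nabla\theta\|_{L^\infty}$ is controlled only through $\exp(A)$, while $A'=\|\nabla\mathbf u\|_{L^\infty}$ is itself controlled only logarithmically in $X$, so the two must be decoupled carefully (via the double-exponential Gronwall) to rule out a spurious finite-time blow-up of the bound. Within this, the finite-$p$ case is the most delicate step, since there the passage from the $L^p$ transport bound to the $L^\infty$ norm appearing in the energy estimate relies on the logarithmic interpolation inequality rather than on a direct maximum-principle bound along the flow, and this interpolation must be justified so that only $\log^{+}X$ (and not a power of $X$) is lost.
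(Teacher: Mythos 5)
Your first step coincides with the paper's: differentiate \eqref{B2}, use the identity of Remark \ref{vi}, test with $\nabla\theta|\nabla\theta|^{p-2}$, and conclude $\|\nabla\theta(t)\|_{L^p}\le\|\nabla\theta_0\|_{L^p}\exp\bigl(\int_0^t\|\nabla\mathbf u\|_{L^\infty}\,d\tau\bigr)$ with a rate independent of $p$. The genuine gap is the step you yourself flag as delicate: the interpolation inequality $\|\nabla\theta\|_{L^\infty}\le C\bigl(1+\|\nabla\theta\|_{L^p}(1+\log^{+}X)\bigr)$ is \emph{false} for finite $p$. The Kozono--Taniuchi inequality (Lemma \ref{kte}) genuinely needs the $BMO$ norm in the prefactor, and $L^p\not\hookrightarrow BMO$ for $p<\infty$; concretely, taking $\theta_\varepsilon=\phi(\cdot/\varepsilon)$ in $\mathbb R^3$ gives $\|\nabla\theta_\varepsilon\|_{L^\infty}\sim\varepsilon^{-1}\to\infty$ while $\|\nabla\theta_\varepsilon\|_{L^2}\sim\varepsilon^{1/2}\to0$ and $\log^{+}\|\nabla\theta_\varepsilon\|_{H^{s-1}}=O(\log(1/\varepsilon))$, so the right-hand side of your inequality stays bounded. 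The paper avoids interpolation altogether: since the Gronwall rate in the $L^p$ estimate does not depend on $p$, it lets $p\to\infty$ (legitimate because $\theta_0\in H^s$ with $s>\frac n2+1$ already puts $\nabla\theta_0$ in $L^\infty$) and obtains $\|\nabla\theta(t)\|_{L^\infty}\le\|\nabla\theta_0\|_{L^\infty}\exp\bigl(\int_0^t\|\nabla\mathbf u\|_{L^\infty}\,d\tau\bigr)$ directly, whence $\|\nabla\theta\|_{BMO}\le 2\|\nabla\theta\|_{L^\infty}$.

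Second, even granting the $p=\infty$ bound, your closing estimate does not close from $\int_0^{\tilde T}\|\nabla\times\mathbf u\|_{BMO}\,d\tau<\infty$ alone. With $Y=\log(eX)$, $g=\|\nabla\times\mathbf u\|_{BMO}$ and $A(t)=\int_0^t\|\nabla\mathbf u\|_{L^\infty}\,d\tau$, your coupled system reads $Y'\lesssim(1+g)(1+Y)+e^{A}$ and $A'\lesssim(1+g)(1+Y)$, which forces $Y\lesssim 1+A+te^{A}$ and hence $A'\lesssim(1+g)\bigl(1+A+te^{A}\bigr)$. This is a Bihari inequality with nonlinearity $w(u)\sim e^{u}$, for which $G(x)=\int^{x}e^{-u}\,du$ is bounded, so the resulting bound escapes to infinity once $\int_0^{t}(1+g(s))s\,ds$ exceeds a finite threshold determined by the data; mere integrability of $g$ on $[0,\tilde T]$ is not sufficient, and the ``double-exponential Gronwall'' you invoke does not exist here. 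The paper does not attempt this coupled argument: it uses the stated hypothesis $(\mathbf u,\theta)\in C([0,\tilde T];H^{s}_{\sigma})\times C([0,\tilde T];H^{s})$ on the closed interval, so $\sup_{[0,\tilde T]}\|\mathbf u\|_{H^{s}}$ is finite a priori; this bounds $A(\tilde T)$ through Lemma \ref{kte} and the integrability of $g$, hence bounds $\sup_{[0,\tilde T]}\|\nabla\theta\|_{L^\infty}$, and the continuation beyond $\tilde T$ is then delegated to Theorem \ref{buc}, whose Gronwall argument is only single-logarithmic in $X$ and does close. You should restructure your proof as that two-stage reduction --- first show $\nabla\theta\in L^1(0,\tilde T;BMO)$, then cite Theorem \ref{buc} --- rather than re-running the $H^s$ energy estimate with the exponential bound inserted.
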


\begin{proof}
We consider the equation \eqref{B2} i.e.,
\[\frac{\partial \mathbf \theta}{\partial t} + (\mathbf u \cdot \nabla)\mathbf \theta = 0,\]
and apply the gradient operator $\nabla = (\partial_{x_1}, \dots, \partial_{x_n})$ on both sides and take $L^2$-inner product with $\nabla \mathbf \theta |\nabla \mathbf \theta|^{p-2}$ to obtain,
\[\left(\frac{\partial}{\partial t} (\nabla \mathbf \theta), \nabla \mathbf \theta |\nabla \mathbf \theta|^{p-2}\right) + \left(\nabla (\mathbf u \cdot \nabla \mathbf \theta), \nabla \mathbf \theta |\nabla \mathbf \theta|^{p-2}\right) = 0.\]
Using the vector identity in Remark \ref{vi} we obtain,
\begin{align}\label{tttt}
\left(\frac{\partial}{\partial t} (\nabla \mathbf \theta), \nabla \mathbf \theta |\nabla \mathbf \theta|^{p-2}\right) + \left( (\nabla \mathbf u)^{t} \cdot \nabla \mathbf \theta, \nabla \mathbf \theta |\nabla \mathbf \theta|^{p-2}\right) + \left( (\mathbf u \cdot \nabla) \nabla \mathbf \theta, \nabla \mathbf \theta |\nabla \mathbf \theta|^{p-2}\right) = 0.
\end{align}
The first term of \eqref{tttt} gives,
\[\left(\frac{\partial}{\partial t} (\nabla \mathbf \theta), \nabla \mathbf \theta |\nabla \mathbf \theta|^{p-2}\right) = \frac{1}{p} \int_{\mathbb R^{n}} \frac{\partial}{\partial t}  |\nabla \mathbf \theta|^{p} \,dx = \frac{1}{p} \frac{d}{dt} \| \nabla \mathbf \theta\|_{L^p}^{p}.\]
The second term of \eqref{tttt} yields,
\begin{align*}
\left( (\nabla \mathbf u)^{t} \cdot \nabla \mathbf \theta, \nabla \mathbf \theta |\nabla \mathbf \theta|^{p-2}\right) &= \int_{\mathbb R^{n}} (\nabla \mathbf u)^{t} \cdot \nabla \mathbf \theta \cdot \nabla \mathbf \theta |\nabla \mathbf \theta|^{p-2} \,dx 
\\ &\leq \int_{\mathbb R^{n}} (\nabla \mathbf u)^{t} \cdot |\nabla \mathbf \theta|^{p}
\leq \|\nabla \mathbf u\|_{L^{\infty}} \|\nabla \mathbf \theta\|_{L^p}^{p}.
\end{align*}
By applying integration by parts and the divergence free condition of $\mathbf u$, we have from the third term of \eqref{tttt},
\begin{align*}
\left( (\mathbf u \cdot \nabla) \nabla \mathbf \theta, \nabla \mathbf \theta |\nabla \mathbf \theta|^{p-2}\right) &= \int_{\mathbb R^{n}} (\mathbf u \cdot \nabla) \nabla \mathbf \theta \cdot \nabla \mathbf \theta |\nabla \mathbf \theta|^{p-2} \,dx
\\ &= \frac{1}{p}  \int_{\mathbb R^{n}} \mathbf u \cdot \nabla |\nabla \mathbf \theta|^{p} \,dx
= -\frac{1}{p} \int_{\mathbb R^{n}} (\nabla \cdot \mathbf u) \cdot |\nabla \mathbf \theta|^{p} \,dx = 0.
\end{align*}
So from the term-wise estimates of \eqref{tttt}, we obtain,
\[\frac{d}{dt} \| \nabla \mathbf \theta\|_{L^p}^{p} \leq  p \|\nabla \mathbf u\|_{L^{\infty}} \|\nabla \mathbf \theta\|_{L^p}^{p},\]
which further gives due to Gronwall's inequality,
\[ \| \nabla \mathbf \theta\|_{L^p}^{p} \leq  \| \nabla \mathbf \theta_{0}\|_{L^p}^{p} \left.exp\right.\left( p \int_{0}^{t} \|\nabla \mathbf u(\tau)\|_{L^{\infty}} \,d\tau \right). \]
So we have,
\[ \| \nabla \mathbf \theta\|_{L^p} \leq  \| \nabla \mathbf \theta_{0}\|_{L^p} \left.exp\right.\left( \int_{0}^{t} \|\nabla \mathbf u(\tau)\|_{L^{\infty}} \,d\tau \right). \]
Letting $p \to \infty,$
\begin{align*}
\| \nabla \mathbf \theta\|_{L^\infty} \leq  \| \nabla \mathbf \theta_{0}\|_{L^\infty} \left.exp\right.\left( \int_{0}^{t} \|\nabla \mathbf u(\tau)\|_{L^{\infty}} \,d\tau \right).
\end{align*}
Due to Lemma \ref{kte}, and properties of $BMO$ spaces, we further have,
\begin{align*}
\| \nabla \mathbf \theta\|_{L^\infty} \leq  \| \nabla \mathbf \theta_{0}\|_{L^\infty} \left.exp\right.\left( C \int_{0}^{t} \left( 1 + \|\nabla \times \mathbf{u}(\tau)\|_{BMO}\left(1 + log^{+}\|\mathbf{u}(\tau)\|_{\H^{s}}\right) \right) \,d\tau \right).
\end{align*}
 As  $ \mathbf \theta_{0} \in H^s(\mathbb R^{n}) \cap W^{1, p}(\mathbb R^{n}), 2\leq p \leq\infty$ and $\sup_{t \in [0, \tilde T]} \|\mathbf u\|_{\H^{s}}$ is bounded for $s>n/2+1$, we have,
\begin{align}\label{tw}
\| \nabla \mathbf \theta\|_{L^\infty} \leq C \left.exp\right. \left(\int_{0}^{\tilde T}  \|\nabla \times \mathbf{u}(\tau)\|_{BMO} \, d\tau\right) 
\end{align}
where $C = C(\|\nabla \mathbf \theta_{0}\|_{L^\infty}, \|\mathbf u\|_{\H^{s}}, \tilde T)$. \\
\noindent
Due to the assumption $\int_{0}^{\tilde T} \|\nabla \times \mathbf u(\tau)\|_{BMO} \,d\tau < \infty$, the estimate in \eqref{tw} is bounded. Hence, $\| \nabla \mathbf \theta\|_{BMO} \leq 2 \| \nabla \mathbf \theta\|_{L^\infty} \leq C < \infty.$
So the bound on BMO norm of curl of $ \mathbf u$ is enough to guarantee that the solution can be extended to $[0, T]$ for some $T > \tilde T$ provided $ \mathbf \theta_{0} \in H^s(\mathbb R^{n}) \cap W^{1, p}(\mathbb R^{n}).$ 
\end{proof}

\section{Inviscid ($B_{0, \kappa}$) and Ideal  ($B_{0, 0}$) Boussinesq Systems}
In this section we will focus on the Boussinesq systems $B_{0, \kappa}$ and $B_{0, 0}$, and build similar results as for $B_{\nu, 0}$ in the previous section. 

The inviscid Boussinesq system ($B_{0, \kappa}$) for the incompressible fluid flows interacting with an active scalar is given by:
\begin{align}\label{Bi}
 \frac{\partial \mathbf u}{\partial t} + (\mathbf u \cdot \nabla)\mathbf u + \nabla p & = \mathbf \theta f,  \ \ \textrm{in}\ \mathbb{R}^n\times (0, \infty)\\
\frac{\partial \mathbf \theta}{\partial t} + (\mathbf u \cdot \nabla)\mathbf \theta & = \kappa \Delta \mathbf \theta,  \ \ \textrm{in}\ \mathbb{R}^n\times (0, \infty)\\
\nabla \cdot \mathbf u & = 0,  \ \ \textrm{in}\ \mathbb{R}^n\times (0, \infty)\\
\mathbf u(x,0) = \mathbf u_{0}(x), & \ \mathbf \theta(x,0) = \mathbf \theta_{0}(x ),  \ \ \textrm{in}\ \mathbb{R}^n, \label{Bii}
\end{align}
where $n$ = 2, 3.

We have the following existence and uniqueness result.
\begin{theorem}\label{NVBE1}
Let $s > \frac{n}{2}+1,$ $\mathbf u_{0} \in \H^s(\mathbb R^{n})$, $\mathbf \theta_{0} \in  H^s(\mathbb R^{n})$, and $f\in L^{\infty} ([0, T];$ $H^s(\mathbb R^n)),$ $n = 2, 3.$ Then there exists a unique solution $(\mathbf u, \mathbf \theta) \in C([0, \tilde{T}]; \H^{s}(\mathbb R^{n}))$ $\times C([0, \tilde{T}]; H^{s}(\mathbb R^{n})) \cap L^{2}([0, \tilde{T}];  H^{s+1}(\mathbb R^{n}))$ to the equations \eqref{Bi}-\eqref{Bii} for some finite time $\tilde{T} = \tilde{T}(s, \kappa, \|u_0\|_{\H^{s}}, \|\theta_0\|_{H^{s}}) > 0$, satisfying the energy estimates
\[ \sup_{t \in [0, \tilde T]} \| \mathbf u(t)\|_{\H^s}<\infty, \left. \left. \left. \right. \right. \right. \sup_{t \in [0, \tilde T]}\| \mathbf \theta(t)\|_{H^s}<\infty, \left. \left. \left. \right. \right. \right. \int_{0}^{\tilde T} \| \nabla \mathbf \theta(t)\|^2_{H^s} \,dt<\infty\]

\end{theorem}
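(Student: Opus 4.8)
The plan is to repeat, with the dissipation structure reversed, the Fourier-truncation and compactness scheme carried out for $B_{\nu,0}$ in Section 3. The velocity equation \eqref{Bi} now carries no viscous smoothing, while the temperature equation acquires the parabolic term $\kappa\Delta\mathbf\theta$; the stronger hypothesis $s>\frac n2+1$ is precisely what replaces the lost velocity dissipation, because it yields the embedding $H^{s-1}\hookrightarrow L^\infty$ and hence $\nabla\mathbf u,\nabla\mathbf\theta\in L^\infty$. First I would apply $\mathcal S_R$ to \eqref{Bi}--\eqref{Bii}, obtaining an ODE on $V_R^\sigma\times V_R$; the local Lipschitz property of the truncated nonlinearities (Proposition \ref{cut}) and the bounded linear growth of $\kappa\Delta\mathbf\theta^R$ on $V_R$ then furnish local solutions $(\mathbf u^R,\mathbf\theta^R)$ through Picard's theorem, defined as long as $\|\mathbf u^R\|_{\H^s}$ and $\|\mathbf\theta^R\|_{H^s}$ stay finite.

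The core step is the $R$-uniform $H^s$ bound replacing Proposition \ref{fin}, and here I would apply $J^s$ rather than $\Lambda^s$ and pair with $(J^s\mathbf u^R,J^s\mathbf\theta^R)$, so that the estimate is obtained directly at the full $H^s$ level. The pressure term drops by incompressibility and the diffusion contributes the favourable term $\kappa\|\nabla\mathbf\theta^R\|^2_{H^s}$ on the left. For the velocity nonlinearity — where no viscosity is available to absorb a top-order factor — the decomposition $J^s[(\mathbf u^R\cdot\nabla)\mathbf u^R]=[J^s,\mathbf u^R]\nabla\mathbf u^R+(\mathbf u^R\cdot\nabla)J^s\mathbf u^R$ is essential: the transport part pairs to zero against $J^s\mathbf u^R$ by $\nabla\cdot\mathbf u^R=0$, and the commutator is controlled by the Kato--Ponce estimate \eqref{ce}, giving $C\|\nabla\mathbf u^R\|_{L^\infty}\|\mathbf u^R\|_{\H^s}^2\le C\|\mathbf u^R\|_{\H^s}^3$ after using $s>\frac n2+1$. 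The temperature transport is handled the same way through \eqref{ce}, bounded by $C\|\mathbf u^R\|_{\H^s}\|\mathbf\theta^R\|_{H^s}^2$, and the forcing by $C\|f^R\|_{H^s}\|\mathbf\theta^R\|_{H^s}\|\mathbf u^R\|_{\H^s}$. Writing $X(t)=\|\mathbf u^R\|^2_{\H^s}+\|\mathbf\theta^R\|^2_{H^s}$ yields $X'\le CX^{3/2}+C\|f^R\|_{H^s}X$, whence Bihari's inequality (Lemma \ref{BE}) produces a terminal time $\tilde T=\tilde T(s,\kappa,\|\mathbf u_0\|_{\H^s},\|\mathbf\theta_0\|_{H^s})$ and a bound uniform in $R$; discarding $X$ and retaining the diffusion term gives $\int_0^{\tilde T}\|\nabla\mathbf\theta^R\|^2_{H^s}\,dt\le C$.

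With these bounds the passage to the limit follows Propositions \ref{Ca}--\ref{La}. I would take the difference of the level-$R$ and level-$R'$ systems ($R'>R$) and pair in $L^2$; because $s>\frac n2+1$ forces $\nabla\mathbf u^R,\nabla\mathbf\theta^R\in L^\infty$, every nonlinear difference term now closes directly in $L^2$ — for instance $\|((\mathbf u^R-\mathbf u^{R'})\cdot\nabla)\mathbf u^R\|_{L^2}\le\|\mathbf u^R-\mathbf u^{R'}\|_{\L^2}\|\nabla\mathbf u^R\|_{L^\infty}$ — so that, in contrast to $B_{\nu,0}$, no viscous absorption of a $\dH^1$-norm of the velocity difference is required (the temperature diffusion appears only with a good sign and is discarded). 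Gronwall's lemma then gives $\sup_{[0,\tilde T]}(\|\mathbf u^R-\mathbf u^{R'}\|_{\L^2}+\|\mathbf\theta^R-\mathbf\theta^{R'}\|_{L^2})\le C/R^\epsilon\to0$, and Sobolev interpolation (Lemma \ref{iss}) upgrades this to strong convergence in $H^{s'}$ for every $s'<s$, which suffices to pass to the limit in the nonlinear terms and time derivatives and to identify $(\mathbf u,\mathbf\theta)$ as a solution with $\mathbf u\in L^\infty([0,\tilde T];\H^s)$ and $\mathbf\theta\in L^\infty([0,\tilde T];H^s)\cap L^2([0,\tilde T];H^{s+1})$. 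Uniqueness follows from the identical $L^2$ difference estimate applied to any two solutions, as in Proposition \ref{p8}.

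Strong time-continuity $\mathbf u\in C([0,\tilde T];\H^s)$ and $\mathbf\theta\in C([0,\tilde T];H^s)$ would then be obtained by the Littlewood--Paley argument of Theorem \ref{mt1}: decompose $\|\mathbf\theta(t_2)-\mathbf\theta(t_1)\|_{B^s_{2,2}}$ (and the analogous velocity difference) into a high-frequency tail, made small uniformly in time by the uniform $H^s$ bound, and a finite low-frequency part, made small in $|t_2-t_1|$ using $\partial_t\mathbf u,\partial_t\mathbf\theta\in L^2([0,\tilde T];H^{s-1})$. I expect the main obstacle to be exactly the velocity estimate: without viscous damping one cannot tolerate any net derivative loss in $(\mathbf u\cdot\nabla)\mathbf u$, so the whole argument hinges on the commutator cancellation together with $\nabla\mathbf u\in L^\infty$, which is what forces the threshold up from $s>\frac n2$ to $s>\frac n2+1$. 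A secondary difference from $B_{\nu,0}$ is that the continuity of $\mathbf u$ can no longer be read off from parabolic regularity and must instead be established through this frequency-splitting estimate.
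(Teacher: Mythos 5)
Your proposal is correct and follows the paper's overall scheme: Fourier truncation and Picard iteration on $V_R^{\sigma}\times V_R$, an $R$-uniform $H^s$ energy estimate closed by the Kato--Ponce commutator and Bihari's inequality, an $L^2$ Cauchy estimate upgraded by Sobolev interpolation, and a frequency-splitting argument for continuity in time. Two local differences are worth recording. For the temperature transport term the paper estimates $\left( \Lambda^s\left[(\mathbf u\cdot\nabla)\mathbf\theta\right],\Lambda^s\mathbf\theta\right)_{L^2}$ directly by $\|\mathbf u\|_{\H^s}\|\nabla\mathbf\theta\|_{H^s}\|\mathbf\theta\|_{H^s}$ and absorbs the factor $\|\nabla\mathbf\theta\|_{H^s}$ into the diffusion via Young's inequality, which produces the term $\frac{C}{4\kappa}X^2$ and hence an existence time that degenerates as $\kappa\to 0$; you instead use the commutator decomposition together with $\nabla\mathbf\theta\in L^{\infty}$ (available since $s>\frac n2+1$), obtaining the $\kappa$-independent bound $C\|\mathbf u\|_{\H^s}\|\mathbf\theta\|^2_{H^s}$ --- this is exactly the paper's treatment of the ideal case $B_{0,0}$ in Theorem \ref{IBE1}, and it buys an existence time uniform in $\kappa$ while still retaining $\kappa\|\nabla\mathbf\theta\|^2_{H^s}$ on the left for the $L^2_tH^{s+1}$ bound on $\mathbf\theta$. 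Second, you correctly observe that the continuity $\mathbf u\in C([0,\tilde T];\H^s)$ can no longer be read off from parabolic regularity, since there is no $L^2_t\H^{s+1}$ bound on $\mathbf u$, and must be obtained by the Littlewood--Paley splitting that the paper applies only to $\mathbf\theta$ in Theorem \ref{mt1}; the paper's remark that Propositions \ref{Ca}--\ref{p8} carry over ``with certain modifications'' leaves this implicit, and your proposal, together with the observation that $s>\frac n2+1$ lets every nonlinear difference term in the Cauchy estimate close directly in $L^2$ without viscous absorption, supplies exactly the needed modifications.
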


\begin{proof}
The idea of the proof is similar to Theorem\ref{mt1} for the case $(B_{\nu, 0}).$ However, one needs to carefully handle the estimates concerning the non-linear terms due to the lack of bounds for $\|\nabla\mathbf u\|_{\H^s}$. Needless to say, in Proposition \ref{fin}, calculations concerning the linear terms remain same. Note that, as $s>n/2+1$,
\begin{align*}
 \left( \Lambda^s \left[ (\mathbf u \cdot \nabla)\mathbf \theta \right], \Lambda^s \mathbf \theta \right)_{L^2} &\leq \| \Lambda^s (\mathbf u \cdot \nabla \mathbf \theta) \|_{L^2} \| \Lambda^s \mathbf \theta \|_{L^2}
\leq \| \mathbf u \cdot \nabla \mathbf \theta \|_{H^s} \|\mathbf \theta \|_{H^s}
\\ &\leq  \| \mathbf u \|_{\H^s}  \| \nabla \mathbf \theta \|_{ H^s} \|\mathbf \theta \|_{H^s}
\leq \frac{1}{2} \| \nabla \mathbf \theta \|_{ H^s} \left( \| \mathbf u \|^2_{\H^s} + \|\mathbf \theta \|^2_{H^s}\right)
\\ &\leq \frac{1}{2} \left(\frac{1}{4 \kappa} \left( \| \mathbf u \|^2_{\H^s} + \|\mathbf \theta \|^2_{H^s}\right)^{2} + \kappa  \| \nabla \mathbf \theta \|^2_{ H^s}\right),
\end{align*}
and
\begin{align*}
 \left( \Lambda^s \left[ (\mathbf u \cdot \nabla)\mathbf u \right], \Lambda^s \mathbf u \right)_{L^2} &=  \left( \left[ \Lambda^s, \mathbf u \right]\nabla \mathbf u , \Lambda^s \mathbf u \right)_{L^2} + \left( (\mathbf u \cdot \nabla) \Lambda^s \mathbf u, \Lambda^s \mathbf u \right)_{L^2} \nonumber
\\ &=  \left( \left[ \Lambda^s, \mathbf u \right]\nabla \mathbf u , \Lambda^s \mathbf u \right)_{L^2} 
\leq \| \left[ \Lambda^s, \mathbf u \right]\nabla \mathbf u \|_{\L^2} \|\Lambda^s \mathbf u \|_{\L^2} \nonumber\\
&\leq C \|\nabla \mathbf u\|_{L^\infty} \| \mathbf u \|_{\H^s} \| \mathbf u \|_{\H^s} \nonumber
 \\&\leq C  \|\nabla \mathbf u\|_{\H^{s-1}} \| \mathbf u \|^{2}_{\H^s} = C  \| \mathbf u \|^{3}_{\H^s}.
\end{align*}
Thus the $H^s$-energy estimate will take the form
\begin{align}\label{eg1}
\frac{d}{dt} &\left( \| \mathbf u \|^{2}_{\H^s} + \| \mathbf \theta \|^{2}_{ H^s}\right) + \kappa \| \nabla \mathbf \theta \|^{2}_{ H^s}\nonumber
\\ &\leq C \| \mathbf u \|^{3}_{\H^s} + \frac{C}{4 \kappa} \left( \| \mathbf u \|^{2}_{\H^s} + \| \mathbf \theta \|^{2}_{ H^s}\right)^{2} + C \| f \|_{ H^s} \left( \| \mathbf u \|^{2}_{\H^s} + \| \mathbf \theta \|^{2}_{ H^s}\right).
\end{align}
The cubic term can be rewritten, using Young's inequality, as
\begin{align*}
 \| \mathbf u \|^{3}_{\H^s} =  \| \mathbf u \|_{\H^s} \| \mathbf u \|^{2}_{\H^s}  &\leq  \| \mathbf u \|_{\H^s} \left(\| \mathbf u \|^{2}_{\H^s} + \| \mathbf \theta \|^{2}_{ H^s}\right)
\\ &\leq C\left[  \left(\| \mathbf u \|^{2}_{\H^s} + \| \mathbf \theta \|^{2}_{ H^s} \right) +  \left( \| \mathbf u \|^{2}_{\H^s} + \| \mathbf \theta \|^{2}_{ H^s}\right)^{2}\right] .
\end{align*}
Letting $X = \| \mathbf u \|^{2}_{\H^s} + \| \mathbf \theta \|^{2}_{ H^s},$ finally we have from \eqref{eg1}, after rearrangement of terms,
\begin{align*}
\frac{dX}{dt} + \kappa \| \nabla \mathbf \theta \|^{2}_{ H^s} \leq C+C X +C(\frac{1}{4 \kappa}+1) X^2.
\end{align*}
The rest of the proof for energy estimate follows from Bihari's inequality and is similar Proposition \ref{fin}, and the blowup time depends on $\kappa$. The proofs of the results similar to Propositions \ref{Ca} - \ref{p8} can be done with certain modifications from place to place.
\end{proof}
\noindent
The blow up criteria for the system $B_{0, \kappa}$ in three-dimensions can be stated as follows, and the line of proof is exactly similar to Theorem \ref{buc} and Theorem \ref{buc1}.
\begin{theorem}\label{NVBE2}
Let $s > \frac{n}{2}+1, f = e_n, \mathbf u_{0} \in \H^s(\mathbb R^{n})$, $\mathbf \theta_{0} \in  H^s(\mathbb R^{n}) \cap W^{1, p}(\mathbb R^{n})$ and $ 2 \leq p \leq \infty,$  n=3. Let  $(\mathbf u, \mathbf \theta) \in C([0, \tilde{T}]; \H^{s}(\mathbb R^{n})) \times C([0, \tilde{T}]; H^{s}(\mathbb R^{n})) \cap L^{2}([0, \tilde{T}];  H^{s+1}(\mathbb R^{n}))$ be a solution of the equations \eqref{Bi}-\eqref{Bii}. If $\mathbf u$ satisfies the condition
\[\int_{0}^{\tilde T} \|\nabla \times \mathbf{u}(\tau)\|_{BMO} \,d\tau < \infty,\]
then $(\mathbf u, \mathbf \theta)$ can be continuously extended to $[0, T]$ for some $T > \tilde T.$
\end{theorem}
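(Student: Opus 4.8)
The plan is to follow the two-step strategy of Theorems \ref{buc} and \ref{buc1}, adapting each estimate to the structure of $B_{0, \kappa}$. The two structural differences from $B_{\nu, 0}$ are that the momentum equation \eqref{Bi} carries no viscous dissipation and that the temperature equation \eqref{Bii} now contains the diffusion term $\kappa \Delta \mathbf\theta$. Neither obstructs the argument: the viscous term $\nu\|\nabla\mathbf u\|^2_{\H^s}$ played no essential role in Theorem \ref{buc}, where it was simply discarded from the left-hand side, and the new thermal dissipation will contribute a term of favorable sign that we likewise discard.

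First I would establish the $H^s$ a priori bound. Applying $J^s$ to \eqref{Bi}--\eqref{Bii}, taking the $L^2$-inner products with $J^s\mathbf u$ and $J^s\mathbf\theta$ respectively, and adding, the pressure term drops by incompressibility, the two transport parts are handled through the Kato--Ponce commutator estimate \eqref{ce} exactly as in Theorem \ref{buc}, and the forcing $\left(J^s(\theta e_n), J^s\mathbf u\right)_{L^2}$ is bounded by $\|\theta\|_{H^s}\|\mathbf u\|_{\H^s}$. The diffusion term yields $\left(\kappa\Delta J^s\mathbf\theta, J^s\mathbf\theta\right)_{L^2} = -\kappa\|\nabla\mathbf\theta\|^2_{H^s}$, which I would move to the left-hand side and then drop, its sign being favorable. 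This reproduces the differential inequality
\[ \frac{d}{dt}\left(\|\mathbf u\|^2_{\H^s} + \|\mathbf\theta\|^2_{H^s}\right) \leq C\left(\|\nabla\mathbf u\|_{L^\infty} + \|\nabla\mathbf\theta\|_{L^\infty} + 1\right)\left(\|\mathbf u\|^2_{\H^s} + \|\mathbf\theta\|^2_{H^s}\right), \]
after which the logarithmic Sobolev inequality (Lemma \ref{kte}), the bound $\|\nabla\mathbf u\|_{BMO} \leq C\|\nabla\times\mathbf u\|_{BMO}$, and a double application of Gronwall's lemma control $\sup_{[0,\tilde T]}\left(\|\mathbf u\|^2_{\H^s} + \|\mathbf\theta\|^2_{H^s}\right)$ in terms of $\int_0^{\tilde T}\left(\|\nabla\times\mathbf u\|_{BMO} + \|\nabla\mathbf\theta\|_{BMO}\right)\,d\tau$, exactly as in Theorem \ref{buc}.

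It then remains to remove the dependence on $\nabla\mathbf\theta$, following Theorem \ref{buc1}. Here the one place where the diffusion must be checked is the $L^p$ estimate for $\nabla\mathbf\theta$. Applying $\nabla$ to the temperature equation, invoking the vector identity of Remark \ref{vi}, and pairing with $\nabla\mathbf\theta\,|\nabla\mathbf\theta|^{p-2}$ as before, the convective term vanishes by $\nabla\cdot\mathbf u = 0$ and the stretching term is controlled by $\|\nabla\mathbf u\|_{L^\infty}\|\nabla\mathbf\theta\|^p_{L^p}$; the extra term $\kappa\left(\Delta\nabla\mathbf\theta, \nabla\mathbf\theta\,|\nabla\mathbf\theta|^{p-2}\right)_{L^2}$ is non-positive after integration by parts for $2 \leq p < \infty$, so it only improves the inequality and may be dropped. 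One is left with $\frac{d}{dt}\|\nabla\mathbf\theta\|^p_{L^p} \leq p\,\|\nabla\mathbf u\|_{L^\infty}\|\nabla\mathbf\theta\|^p_{L^p}$, precisely as in Theorem \ref{buc1}. Gronwall's lemma, the limit $p\to\infty$, Lemma \ref{kte}, and the uniform bound on $\|\mathbf u\|_{\H^s}$ then render $\|\nabla\mathbf\theta\|_{L^\infty}$, and hence $\|\nabla\mathbf\theta\|_{BMO}$, finite on $[0,\tilde T]$ under the single hypothesis $\int_0^{\tilde T}\|\nabla\times\mathbf u\|_{BMO}\,d\tau < \infty$.

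The main obstacle is exactly the verification just described: confirming that in the presence of thermal diffusion the $L^p$ stretching estimate for $\nabla\mathbf\theta$ still closes. Once the diffusive contribution is seen to carry the dissipative sign and drop out, the transport-type bound of Theorem \ref{buc1} survives unchanged; feeding the resulting $BMO$ bound on $\nabla\mathbf\theta$ into the first step removes the $\nabla\mathbf\theta$ term from the continuation criterion and extends $(\mathbf u, \mathbf\theta)$ to $[0, T]$ for some $T > \tilde T$.
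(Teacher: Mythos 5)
Your proposal is correct and follows exactly the route the paper intends: the paper's own justification of Theorem \ref{NVBE2} is simply the remark that the line of proof is that of Theorems \ref{buc} and \ref{buc1}, and you have verified the only two points that actually need checking, namely that the thermal diffusion term contributes $-\kappa\|\nabla\mathbf\theta\|^2_{H^s}$ in the $H^s$ estimate and a non-positive term in the $L^p$ estimate for $\nabla\mathbf\theta$ (for $2\le p<\infty$), so both may be dropped, while the viscous term was already discarded in the original argument. No gaps.
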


Now we consider the ideal Boussinesq system $B_{0, 0}$, i.e., when both kinematic viscosity and thermal diffusivity are zero,
\begin{align}\label{Bid}
 \frac{\partial \mathbf u}{\partial t} + (\mathbf u \cdot \nabla)\mathbf u + \nabla p &= \mathbf \theta f,\\
\frac{\partial \mathbf \theta}{\partial t} + (\mathbf u \cdot \nabla)\mathbf \theta &= 0,\\
\nabla \cdot \mathbf u &= 0,\\
\mathbf u(x,0) = \mathbf u_{0}(x), &\ \mathbf \theta(x,0) = \mathbf \theta_{0}(x ),\label{Bidi}
\end{align}
in the whole of $\mathbb R^{n}$ for $n$ = 2, 3.

The main result on the local-in-time existence and uniqueness is as follows:
\begin{theorem}\label{IBE1}
Let $s > \frac{n}{2}+1,$ $\mathbf u_{0} \in \H^s(\mathbb R^{n})$, $\mathbf \theta_{0} \in  H^s(\mathbb R^{n})$, and $f\in L^{\infty} ([0, T];$ $H^s(\mathbb R^n)),$ $n = 2, 3.$ Then there exists a unique solution $(\mathbf u, \mathbf \theta) \in C([0, \tilde{T}]; \H^{s}(\mathbb R^{n}))$ $\times C([0, \tilde{T}]; H^{s}(\mathbb R^{n}))$ to the equations \eqref{Bid}-\eqref{Bidi} for some finite time $\tilde{T} = \tilde{T}(s, \|u_0\|_{\H^{s}}, \|\theta_0\|_{H^{s}}) > 0$, satisfying the energy estimates
\[ \sup_{t \in [0, \tilde T]} \| \mathbf u(t)\|_{\H^s}<\infty, \left. \left. \left. \right. \right. \right. \sup_{t \in [0, \tilde T]}\| \mathbf \theta(t)\|_{H^s}<\infty.\]
\end{theorem}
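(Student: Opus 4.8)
The plan is to mirror the Fourier-truncation scheme developed for $B_{\nu,0}$ in Section 3 and adapted to $B_{0,\kappa}$ in Theorem \ref{NVBE1}, while isolating the modifications forced by the complete absence of dissipation. First I would truncate \eqref{Bid}--\eqref{Bidi} by applying ${\mathcal S}_{\textit R}$, eliminate the pressure through the Leray projection $\mathbb P = I - \nabla(-\Delta)^{-1}\nabla\cdot$ (a Fourier multiplier, hence commuting with ${\mathcal S}_{\textit R}$ and bounded on every $H^{\sigma}$), and solve the resulting ODE on $V_R^{\sigma}\times V_R$ by Picard's theorem exactly as in Proposition \ref{cut}. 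Note that here there is no viscous linear term, so only the local Lipschitz property of the projected nonlinearity and of the forcing $\mathbf\theta^{\textit R} f^{\textit R}$ is needed, and the truncated solution persists as long as $\|\mathbf u^{\textit R}\|_{\H^s}$ and $\|\mathbf\theta^{\textit R}\|_{H^s}$ stay finite.

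For the uniform energy estimate (the analogue of Proposition \ref{fin}) I would apply $J^s$, pair in $L^2$ with $J^s\mathbf u^{\textit R}$ and $J^s\mathbf\theta^{\textit R}$, and invoke the Kato--Ponce commutator estimate \eqref{ce} on \emph{both} nonlinear terms. The transport parts $((\mathbf u^{\textit R}\cdot\nabla)J^s\mathbf u^{\textit R}, J^s\mathbf u^{\textit R})$ and $((\mathbf u^{\textit R}\cdot\nabla)J^s\mathbf\theta^{\textit R}, J^s\mathbf\theta^{\textit R})$ vanish by the divergence-free condition, leaving the commutators, which by \eqref{ce} together with $s>\frac n2+1$ (so that $\|\nabla\mathbf u^{\textit R}\|_{L^\infty}\le C\|\mathbf u^{\textit R}\|_{\H^s}$ and $\|\nabla\mathbf\theta^{\textit R}\|_{L^\infty}\le C\|\mathbf\theta^{\textit R}\|_{H^s}$ via Remarks \ref{gradhs} and \ref{prodhs}) are controlled by $C\|\mathbf u^{\textit R}\|_{\H^s}^3$ and $C\|\mathbf u^{\textit R}\|_{\H^s}\|\mathbf\theta^{\textit R}\|_{H^s}^2$. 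Setting $X(t)=\|\mathbf u^{\textit R}(t)\|_{\H^s}^2+\|\mathbf\theta^{\textit R}(t)\|_{H^s}^2$, this gives
\[\frac{dX}{dt}\le C\,X^{3/2}+C\|f^{\textit R}\|_{H^s}\,X,\]
with \emph{no} dissipative term on the left; Bihari's inequality (Lemma \ref{BE}) then furnishes a terminal time $\tilde T$ depending only on $s,\|\mathbf u_0\|_{\H^s},\|\mathbf\theta_0\|_{H^s}$ up to which $X$ is bounded independently of $R$. This is the crucial departure from $B_{0,\kappa}$: there is no $\kappa\|\nabla\mathbf\theta\|_{H^s}^2$ available to absorb $\|\nabla\mathbf\theta\|_{H^s}$, so the $\mathbf\theta$-equation \emph{must} be closed through the commutator and the $L^\infty$-bound on $\nabla\mathbf\theta$ rather than through a product estimate and Young's inequality as in Theorem \ref{NVBE1}.

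Next I would show that $(\mathbf u^{\textit R},\mathbf\theta^{\textit R})$ is Cauchy in $L^\infty([0,\tilde T];\L^2\times L^2)$, as in Proposition \ref{Ca}, by estimating the equations for the differences $\delta\mathbf u=\mathbf u^{\textit R}-\mathbf u^{\textit R'}$ and $\delta\mathbf\theta=\mathbf\theta^{\textit R}-\mathbf\theta^{\textit R'}$. Here again the absence of viscosity is decisive: the term $({\mathcal S}_{\textit R'}[(\delta\mathbf u\cdot\nabla)\mathbf\theta^{\textit R}],\delta\mathbf\theta)$, which in Proposition \ref{Ca} was split by Young's inequality and absorbed into $\tfrac{\nu}{4}\|\delta\mathbf u\|_{\H^1}^2$, is now bounded directly by $\|\delta\mathbf u\|_{\L^2}\|\nabla\mathbf\theta^{\textit R}\|_{L^\infty}\|\delta\mathbf\theta\|_{L^2}\le C\|\mathbf\theta^{\textit R}\|_{H^s}\|\delta\mathbf u\|_{\L^2}\|\delta\mathbf\theta\|_{L^2}$, using once more that $s>\frac n2+1$ keeps $\nabla\mathbf\theta^{\textit R}$ bounded; the analogous velocity term is treated identically, and the truncation-error terms contribute $O(R^{-\epsilon})$ through the properties of ${\mathcal S}_{\textit R}$. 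With $Y=\|\delta\mathbf u\|_{\L^2}+\|\delta\mathbf\theta\|_{L^2}$ one obtains $\frac{dY}{dt}\le CY+CR^{-\epsilon}$, and Gronwall gives the Cauchy property. Interpolation (Lemma \ref{iss}) upgrades this to convergence in $H^{s'}$ for every $s'<s$, the nonlinear terms then converge strongly in $L^\infty([0,\tilde T];H^{s'-1})$ as in Section 3, and Banach--Alaoglu yields $\mathbf u\in L^\infty([0,\tilde T];\H^s)$, $\mathbf\theta\in L^\infty([0,\tilde T];H^s)$; uniqueness follows from the same difference estimate as in Proposition \ref{p8}.

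It remains to upgrade $L^\infty$-in-time to strong continuity, and this is where I expect the main obstacle. For $B_{\nu,0}$ the continuity of $\mathbf u$ in $\H^s$ was a free consequence of parabolic regularity ($\mathbf u\in L^2(\H^{s+1})$, $\partial_t\mathbf u\in L^2(\H^{s-1})$), and only $\mathbf\theta$ required the Littlewood--Paley argument at the end of Theorem \ref{mt1}. For $B_{0,0}$ neither field enjoys any smoothing, so \emph{both} must be handled by the Besov-space argument: using $\partial_t\mathbf\theta=-(\mathbf u\cdot\nabla)\mathbf\theta$ and $\partial_t\mathbf u=-\mathbb P[(\mathbf u\cdot\nabla)\mathbf u]+\mathbb P(\mathbf\theta f)$, both of which are bounded in $L^\infty([0,\tilde T];H^{s-1})$ by Remark \ref{div}, one splits $\|\mathbf u(t_2)-\mathbf u(t_1)\|_{H^s}\approx\|\cdot\|_{B^s_{2,2}}$ into high frequencies (small by the uniform $H^s$-bound) and low frequencies (small by the time integral of $\partial_t\mathbf u$), exactly as was done for $\mathbf\theta$ in Theorem \ref{mt1}, and likewise for $\mathbf\theta$. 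The delicate point throughout is that every closing estimate --- energy, Cauchy, uniqueness, and continuity --- must be driven by the $L^\infty$-control of the first derivatives $\nabla\mathbf u,\nabla\mathbf\theta$ afforded by $s>\frac n2+1$, since no dissipative mechanism is available to absorb a gain of one derivative.
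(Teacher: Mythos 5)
Your proposal is correct and follows essentially the same route as the paper: the $H^s$ energy estimate is closed through the Kato--Ponce commutator bound together with the $L^{\infty}$ control of $\nabla\mathbf u$ and $\nabla\mathbf\theta$ available for $s>\frac{n}{2}+1$, yielding a differential inequality of the form $\frac{dX}{dt}\le CX+CX^{2}$ (your $X^{3/2}$ version is equivalent after Young's inequality) that is handled by Bihari's lemma. The paper compresses the remaining steps into ``modifications from place to place'' of Propositions \ref{Ca}--\ref{p8}; your explicit treatment of the Cauchy estimate without the viscous absorption term and of the time-continuity of $\mathbf u$ via the Littlewood--Paley splitting supplies precisely those modifications.
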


\begin{proof}
Note that, in this case, there are no bounds on the $H^s$-norms of both $\nabla \mathbf u$ and $\nabla \mathbf \theta$. The estimate for the non-linear term $\left( \Lambda^s \left[ (\mathbf u \cdot \nabla)\mathbf u \right], \Lambda^s \mathbf u \right)_{L^2}$ remain same as in Theorem \ref{NVBE1}. However, the other non-linear term needs to be taken care separately. As $s>n/2+1$,
\begin{align*}
 \left( \Lambda^s \left[ (\mathbf u \cdot \nabla)\mathbf \theta \right], \Lambda^s \mathbf \theta \right)_{L^2} &\leq C\left( \| \nabla\mathbf u\|_{L^{\infty}}  \|\mathbf \theta\|_{H^s} + \|\mathbf u\|_{\H^s}  \|\nabla \mathbf \theta\|_{L^{\infty}} \right) \|\mathbf \theta\|_{H^s}
\\ &\leq C\left( \|\mathbf u\|_{\H^s}  \|\mathbf \theta\|_{H^s} + \|\mathbf u\|_{\H^s}  \|\mathbf \theta\|_{H^s} \right) \|\mathbf \theta\|_{H^s}
\\ &\leq 2 \|\mathbf u\|_{\H^s} \|\mathbf \theta\|^2_{H^s}.
\end{align*}
Therefore the energy estimate will take the form, after rearrangements,
\begin{align*}
\frac{d}{dt} &\left( \| \mathbf u \|^{2}_{\H^s} + \| \mathbf \theta \|^{2}_{ H^s}\right)
\\ &\leq C \| \mathbf u \|^{3}_{\H^s} + C\| \mathbf u \|_{\H^s}  \| \mathbf \theta \|^{2}_{ H^s} +  C\| f \|_{H^s} \left( \| \mathbf u \|^{2}_{\H^s} + \| \mathbf \theta \|^{2}_{ H^s}\right).
\end{align*}
The cubic term can be rewritten as in Theorem \ref{NVBE1}, and finally letting $X(t) = \| \mathbf{u}(t) \|^{2}_{\H^s} + \| \mathbf \theta(t) \|^{2}_{ H^s}$ and applying Young's inequality, we obtain
\begin{align*}
\frac{dX(t)}{dt} \leq CX(t) + CX(t)^{2}.
\end{align*}
Therefore application of Bihari's inequality as in Proposition \ref{fin} will provide the energy estimates and the blowup time. Proofs of the results similar to Propositions \ref{Ca} - \ref{p8} can be imitated for this case with certain modifications from place to place.
\end{proof}
There is no global well-posedness result available for the system $B_{0, 0}$ even in two-dimensions \cite{CN}. So the below Theorem is valid for both two and three dimensions and the line of proof is exactly similar to Theorem \ref{buc} and Theorem \ref{buc1}.
\begin{theorem}\label{IBE2}
Let $s > \frac{n}{2}+1, f = e_n,\mathbf u_{0} \in \H^s(\mathbb R^{n})$, $\mathbf \theta_{0} \in  H^s(\mathbb R^{n}) \cap W^{1, p}(\mathbb R^{n})$ and $ 2 \leq p \leq \infty,$  n=2, 3. Let  $(\mathbf u, \mathbf \theta) \in C([0, \tilde{T}]; \H^{s}(\mathbb R^{n})) \times C([0, \tilde{T}]; H^{s}(\mathbb R^{n}))$ be a solution of the equations \eqref{Bid}-\eqref{Bidi}. If $\mathbf u$ satisfies the condition
\[\int_{0}^{\tilde T} \|\nabla \times \mathbf{u} (\tau)\|_{BMO} \,d\tau < \infty,\]
then $(\mathbf u, \mathbf \theta)$ can be continuously extended to $[0, T]$ for some $T > \tilde T.$
\end{theorem}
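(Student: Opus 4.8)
The plan is to adapt the two-step scheme of Theorems \ref{buc} and \ref{buc1} to the fully inviscid system \eqref{Bid}--\eqref{Bidi}, the only structural change being the absence of the dissipative terms $\nu\Delta\mathbf u$ and $\kappa\Delta\mathbf\theta$. First I would apply $J^s$ to \eqref{Bid} and to the temperature equation, take the $L^2$-inner products with $J^s\mathbf u$ and $J^s\mathbf\theta$ respectively, and add. The pressure term drops by incompressibility, the transport parts $\left((\mathbf u\cdot\nabla)J^s\mathbf u, J^s\mathbf u\right)$ and $\left((\mathbf u\cdot\nabla)J^s\mathbf\theta, J^s\mathbf\theta\right)$ vanish after integration by parts using $\nabla\cdot\mathbf u = 0$, the forcing $J^s(\mathbf\theta e_n)$ contributes at most $C(\|\mathbf u\|_{\H^s}^2 + \|\mathbf\theta\|_{H^s}^2)$ since $|e_n|=1$, and the two commutators are estimated by \eqref{ce} exactly as in Theorem \ref{buc}. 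Writing $Z(t) = \|\mathbf u(t)\|_{\H^s}^2 + \|\mathbf\theta(t)\|_{H^s}^2$, this yields the same differential inequality
\[\frac{d}{dt}Z(t) \leq C\left(\|\nabla\mathbf u\|_{L^\infty} + \|\nabla\mathbf\theta\|_{L^\infty} + 1\right)Z(t);\]
here, unlike the $B_{\nu,0}$ case, there is simply no $\nu\|\nabla\mathbf u\|_{\H^s}^2$ term to discard, so nothing is lost, and the estimate is valid verbatim in both $n=2$ and $n=3$.

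Next I would control the two $L^\infty$ quantities on the right. For $\|\nabla\mathbf u\|_{L^\infty}$ the Kozono--Taniuchi logarithmic Sobolev inequality (Lemma \ref{kte}), together with the boundedness of the Riesz transforms in $BMO$, gives, exactly as in \eqref{bmou},
\[\|\nabla\mathbf u\|_{L^\infty} \leq C\left(1 + \|\nabla\times\mathbf u\|_{BMO}\left(1 + \log^+\left(\|\mathbf u\|_{\H^s}^2 + \|\mathbf\theta\|_{H^s}^2\right)\right)\right).\]
For $\|\nabla\mathbf\theta\|_{L^\infty}$ I would repeat the transport argument of Theorem \ref{buc1} verbatim: since the temperature equation in \eqref{Bid}--\eqref{Bidi} is the \emph{same} pure transport equation $\partial_t\mathbf\theta + (\mathbf u\cdot\nabla)\mathbf\theta = 0$ as in $B_{\nu,0}$, applying $\nabla$, pairing with $\nabla\mathbf\theta|\nabla\mathbf\theta|^{p-2}$, and invoking the vector identity of Remark \ref{vi} together with $\nabla\cdot\mathbf u=0$ gives $\frac{d}{dt}\|\nabla\mathbf\theta\|_{L^p}^p \leq p\|\nabla\mathbf u\|_{L^\infty}\|\nabla\mathbf\theta\|_{L^p}^p$, whence Gronwall followed by $p\to\infty$ yields
\[\|\nabla\mathbf\theta\|_{L^\infty} \leq \|\nabla\mathbf\theta_0\|_{L^\infty}\exp\left(\int_0^t\|\nabla\mathbf u(\tau)\|_{L^\infty}\,d\tau\right).\]
This is exactly where the hypothesis $\mathbf\theta_0\in W^{1,p}(\mathbb R^n)$, $2\leq p\leq\infty$, is used, and it works in both dimensions.

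Finally I would close the loop. Because the solution is given in $C([0,\tilde T];\H^s)$, the quantity $\sup_{[0,\tilde T]}\|\mathbf u\|_{\H^s}$ is finite, so $\log^+\left(\|\mathbf u\|_{\H^s}^2 + \|\mathbf\theta\|_{H^s}^2\right)$ is bounded on $[0,\tilde T]$; substituting the bound on $\|\nabla\mathbf u\|_{L^\infty}$ into the exponent above and using $\int_0^{\tilde T}\|\nabla\times\mathbf u\|_{BMO}\,d\tau < \infty$ shows that $\|\nabla\mathbf\theta\|_{L^\infty}$, and hence $\|\nabla\mathbf\theta\|_{BMO}\leq 2\|\nabla\mathbf\theta\|_{L^\infty}$, is bounded uniformly on $[0,\tilde T]$, so $\int_0^{\tilde T}\|\nabla\mathbf\theta\|_{BMO}\,d\tau < \infty$ as well. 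I would then feed both $BMO$-integrals into the logarithmic Gronwall argument of Theorem \ref{buc} — plug \eqref{bmou} and the $\nabla\mathbf\theta$-analogue into the inequality for $Z$, take logarithms, pass to $\log(eZ)$ and apply Gronwall — to obtain a bound on $\sup_{[0,\tilde T]}Z(t)$ depending only on $Z(0)$, $\tilde T$ and $\int_0^{\tilde T}\left(\|\nabla\times\mathbf u\|_{BMO} + \|\nabla\mathbf\theta\|_{BMO}\right)\,d\tau$. Since $\|\mathbf u(\tilde T)\|_{\H^s}$ and $\|\mathbf\theta(\tilde T)\|_{H^s}$ are then finite, restarting the local existence result (Theorem \ref{IBE1}) at $\tilde T$ extends the solution to $[0,T]$ with $T>\tilde T$. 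The main obstacle is precisely the bootstrap in this last step: the bound on $\|\nabla\mathbf\theta\|_{L^\infty}$ depends on $\int\|\nabla\mathbf u\|_{L^\infty}$, which in turn depends logarithmically on $\|\mathbf u\|_{\H^s}$ — the very norm one is trying to bound. The resolution, as in Theorem \ref{buc1}, is to break the apparent circularity by first using finiteness of $\sup_{[0,\tilde T]}\|\mathbf u\|_{\H^s}$ on the closed interval to turn $\|\nabla\mathbf\theta\|_{BMO}$ into an honest constant, and only afterwards to run the genuine logarithmic Gronwall estimate that propagates the bound up to $\tilde T$.
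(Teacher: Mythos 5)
Your proposal follows exactly the route the paper takes: the paper gives no separate proof of Theorem \ref{IBE2}, stating only that ``the line of proof is exactly similar to Theorem \ref{buc} and Theorem \ref{buc1},'' and your adaptation — the $J^s$ energy inequality with the Kato--Ponce commutator bound, the Kozono--Taniuchi estimate \eqref{bmou} for $\|\nabla\mathbf u\|_{L^\infty}$, the $L^p$ transport argument of Theorem \ref{buc1} for $\|\nabla\mathbf\theta\|_{L^\infty}$ (unchanged since the temperature equation is still pure transport), and the final logarithmic Gronwall closure — is precisely that adaptation, with the correct observation that dropping the dissipative terms costs nothing since they only ever appeared as discarded nonnegative terms on the left. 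Your handling of the apparent circularity via the a priori finiteness of $\sup_{[0,\tilde T]}\|\mathbf u\|_{\H^s}$ also mirrors how the paper itself argues in Theorem \ref{buc1}.
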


\end{document}